\long\def\commentout#1{}
\newif\ifprint
\newif\ifarxiv
\newif\ifhyper
\let\cir\@undefined
	\definecolor{linkred}{rgb}{0,0,0} 
	\definecolor{linkblue}{rgb}{0,0,0} 
	\definecolor{linkred}{rgb}{0.7,0.2,0.2}
	\definecolor{linkblue}{rgb}{0,0.2,0.6}
\newcommand{\net}[1]{{\texttt{#1}}}
\def\bib@div@mark#1{%
 \@mkboth{{#1}}{{#1}}%
	}
\def\print@backrefs#1{%
 \space\SentenceSpace$\leftarrow$\csname br@#1\endcsname
}
\renewcommand{\PrintAuthors}[1]{%
 \ifx\previous@primary\current@primary
  \sameauthors\@empty
 \else
  \def\current@bibfield{\bib'author}%
		  \PrintNames{}{}{\scshape #1}%
 \fi
}
\def\MRhref#1#2{%
 \begingroup
 \parse@MR#1 ()\@empty\@nil%
  \href{\MR@url}{\texttt{\@tempd\vphantom{()}}}%
  \ifx\@tempe\@empty
  \else
   \ \href{\MR@url}{\texttt{(\@tempe)}}%
  \fi
 \endgroup
}%
\def\MR#1{%
 \relax\ifhmode\unskip\spacefactor3000 \space\fi
 \begingroup
 \strip@MRprefix#1\@nil
  \edef\@tempa{\@nx\MRhref{MR\@tempa}{\@tempa}}%
 \@xp\endgroup
 \@tempa
}
\definecolor{authornote}{rgb}{0,0.6,0} 
	\def\authorsnote#1{}
	\def\authorsnote#1{{\color{authornote}#1}}
\newtheorem{theorem}[equation]{Theorem}
\newtheorem{lemma}[equation]{Lemma}
\newtheorem{corollary}[equation]{Corollary}
\newtheorem{conjecture}[equation]{Conjecture}
\theoremstyle{definition}
\newtheorem{definition}[equation]{Definition}
\newtheorem{example}[equation]{Example}
\newtheorem{problem}[equation]{Problem}
\theoremstyle{remark}
\newtheorem{remark}[equation]{Remark}
\numberwithin{equation}{section}
\def\normallinespread{1.15}
\def\normalspread{\linespread{\normallinespread}\normalfont\selectfont}
\def\bibreflinespread{0.98}
\def\bibrefspread{\linespread{\bibreflinespread}\normalfont\selectfont}
\newcommand{\ds}[1]{\displaystyle{#1}}
\newcommand{\PP}{\ensuremath{\mathbb{P}}}
\newcommand{\irr}{\operatorname{irr}}
\newcommand{\HH}{\ensuremath{\mathcal{H}}}
\newcommand{\A}{\ensuremath{\mathcal{A}}}
\newcommand{\RR}{\ensuremath{\mathbb{R}}}
\newcommand{\TT}{\ensuremath{\mathcal{T}}}
\newcommand{\tp}{\operatorname{top}}
\newcommand{\dblq}{{/{\kern-3pt}/}} 
\def\OO{\mathcal{O}}
\def\cA{\mathcal{A}}
\def\F{\mathcal{F}}
\def\aa{\overline{\mathcal{A}}}
\newcommand{\C}{\mathbb{C}}
\newcommand{\Q}{\mathbb{Q}}
\newcommand{\G}{\mathbb{G}}
\newcommand{\M}{\ensuremath{\mathcal M}}
\newcommand{\Mbar}{\ensuremath{\overline{\M}}}
\newcommand{\m}{\ensuremath{M}}
\newcommand{\mbar}{\ensuremath{\overline{\m}}}
\newcommand{\mijbar}[2]{\ensuremath{\mbar_{{#1},{#2}}}}
\newcommand{\deltairr}{\ensuremath{\delta_{\irr}}} 
\newcommand{\Deltairr}{\ensuremath{\Delta_{\irr}}}
\newcommand{\moobarMM}[3]{\ensuremath{\mijbar{0}{0}(#1,#2,#3)}} 
\newcommand{\thst}[2]{\ensuremath{{#1}^{\mathrm{#2}}}}
\newcommand{\eff}[1]{\mathrm{Eff}\bigl({#1}\bigr)}
\newcommand{\nef}[1]{\mathrm{Nef}\bigl({#1}\bigr)}
\newcommand{\Si}[1]{{\mathfrak S}_{#1}}
\newcommand{\Sn}{\Si{n}}
\newcommand{\Dsym}{\widetilde{\Delta}}
\newcommand{\pic}[1]{\mathrm{Pic}\bigl({#1}\bigr)}
\newcommand{\picone}{\mathrm{Pic}^{\underline{1}}}
\newcommand{\picdeg}[2]{\mathrm{Pic}^{#1}\bigl({#2}\bigr)}
\newcommand{\sspan}{\mathrm{span}}
\newcommand{\conv}{\mathrm{Conv}}
\newcommand{\Proj}{\mathrm{Proj}}
\newcommand{\Chow}{\mathrm{Chow}}
\newcommand{\capac}{\mathrm{cap}}
\newcommand{\PGL}{\PP\mathrm{GL}}
\newcommand{\shavedast}{\ast \kern -1.25pt}
\newcommand{\ooo}{\ensuremath{\mathcal{O}}} 
\newcommand{\Deltawt}{\Delta_{\text{wt}}}
\newcommand{\bwt}{b_{\text{wt}}}
\newcommand{\nnn}{{<}n{>}}
\newcommand{\ddd}{{<}d{>}}
\begin{document}
	
\renewcommand{\topfraction}{0.85}
\renewcommand{\textfraction}{0.1}
\renewcommand{\floatpagefraction}{0.85}
\normalspread

\title{Effective divisors on moduli spaces}

\author{Dawei Chen}
\address{Department of Mathematics\\ Boston College\\ Chestnut Hill, MA 02467, USA}
\email{dawei.chen@bc.edu}
\thanks{}

\author{Gavril Farkas}
\address{Humboldt Universit\"at zu Berlin\\Institut f\"ur Mathematik\\ Unter den Linden 6\newline \indent  Berlin 10099, Germany}
\email{farkas@math.hu-berlin.de}
\thanks{}

\author{Ian Morrison}
\address{Department of Mathematics\\ Fordham University\\ Bronx, NY 10458, USA}
\email{morrison@fordham.edu}

\date{\today}

\thanks{The first author is partially supported by the NSF grant DMS-1200329.}
\keywords{Algebraic geometry, moduli, stable curve, effective divisor}
\subjclass[2010]{Primary 14D22, 14H51; Secondary 14E30, 14H10}
\dedicatory{\vskip8pt \normalsize Dedicated to Joe Harris---master geometer, inspired teacher and valued friend---\newline on the occasion of his \thst{\textit{60}}{\textit{th}} birthday.\vskip8pt}



\maketitle
\thispagestyle{empty}


\section*{Introduction}

The pseudo-effective cone $\mbox{Eff}(X)$ of a smooth projective variety $X$ is a fundamental, yet elusive invariant. On one hand, a few general facts are known: the interior of the effective cone is the cone of big divisors so, in particular, $X$ is of general type if and only if $K_X\in \mbox{int}(\mbox{Eff}(X))$;
less obviously \cite{BDPP}, a variety $X$ is uniruled if and only if $K_X$ is not pseudo-effective and the dual of $\mbox{Eff}(X)$ is the cone of movable curves; and, the effective cone is known to be polyhedral for Fano varieties. For further background, see \cite{LazarsfeldPositivityI}.
On the other hand, no general structure theorem is known and the calculation of $\mbox{Eff}(X)$ is a daunting task even in some of the simplest cases. For instance, the problem of computing the cone $\mbox{Eff}(C^{(2)})$ for a very general curve $C$ of genus $g$ is known to be equivalent to Nagata's Conjecture, see \cite{CilibertoKouvidakis}.

The aim of this paper is to survey what is known about the effective cones of moduli spaces, with a focus on the moduli spaces $\Mbar_{g,n}$ of stable curves, $\aa_g$ of  principally polarized abelian varieties and $\Mbar_{g, n}(X, \beta)$ of stable maps. Because related moduli spaces often have an inductive combinatorial structure and the associated families provide a rich cycle theory, the study of effective cones of moduli spaces has often proven more tractable and more applicable than that of general algebraic varieties. 

For example, in the case of $\Mbar_g$, we may define, following~\cite{HarrisMorrisonSlopes}, the slope $s(D)$ of a divisor class $D$ of the form $a\lambda - b\delta -c_{\irr}
 \deltairr -  \sum_{i} c_i\delta_i$, with $a$ and $b$ positive, all the $c$'s non-negative and at least one---in practice, almost always $c_{\irr}$---equal to $0$, to be $\frac{a}{b}$. We set $s(D) = \infty$ for divisors not of this form, for example, if $g\ge 3$, for the components $\Delta_{\irr}$ and $\Delta_i$. A fundamental invariant is then the slope $s(\Mbar_g):=\mbox{inf}\{ s(D) \,|\, D \in \mbox{Eff}(\Mbar_g)\}$.
The Harris-Mumford theorem \cite{HarrisMumfordKodaira} on the Kodaira dimension of $\Mbar_g$, is equivalent to the inequality $s(\Mbar_g)<s(K_{\Mbar_g})=\frac{13}{2}$, for $g\geq 24$. For a long time, the conjecture of \cite{HarrisMorrisonSlopes} that the inequality $s(\Mbar_g)\geq 6+\frac{12}{g+1}$ holds, equality being attained only for the classical Brill-Noether divisors whose classes were also computed in~\cite{HarrisMumfordKodaira}, was widely believed.  Counterexamples were provided in \cite{FarkasKoszul} for infinitely many $g$ though all of these have slope greater than $6$. On the other hand, all the methods (cf. \cites{HarrisMorrisonSlopes,ChenRigid, PandharipandeSlope}) for bounding $s(\Mbar_g)$ from below for large $g$, yield only bounds that tend to zero with $g$. This sets the stage for the following fundamental question:
\begin{problem}
Does the limit $\ds{s_{\infty}:=\lim_{g\to  \infty} s(\Mbar_g)}$ exist, and, if so, what is it's value?
\end{problem}
The authors know of no credible, generally accepted conjectural answer. The first tends to guess that $s_{\infty}=0$, the second and third that $s_{\infty} = 6$. Hedging his guess, the third author has a dinner \emph{bet} with the second, made at the 2009 MSRI Program in Algebraic Geometry: the former wins if $s_{\infty}=0$, the latter if $s_{\infty}>0$, and the bet is annulled should the limit not exist.

The argument for $s_{\infty}=0$ is that the papers cited above. which compute the invariants of movable curves in
$\Mbar_g$ using tools as diverse tools as Hurwitz theory, Teichm\"uller dynamics and Hodge integrals, do no better than $s(\Mbar_g)\geq O(\frac{1}{g})$. Intriguingly, the first two methods, though apparently quite different in character, suggest the \emph{same} heuristic lower bound $\frac{576}{5g}$ for the slope; see Section 3 of this paper. Is this coincidence or evidence for the refined asymptotic ${\liminf_{g\to  \infty} g\,s(\Mbar_g) = \frac{576}{5}}$ conjectured by the first author in \cite{ChenRigid}, and hence that $s_{\infty}=0$?

The argument for $s_{\infty}>0$ is that effective divisors of small slope are known to have strong geometric characterizations: for instance, they must contain the locus $\mathcal{K}_g$ of curves lying on $K3$ surfaces. Constructing \emph{any} such divisors, let alone ones of arbitrarily small slope, is notoriously difficult. In fact, for $g\geq 11$, not a single example of an effective divisor having slope less than $6+\frac{10}{g}$ is known. The current state of knowledge concerning divisors of small slope is summarized in Section 2 of the paper.

We invite the reader to take sides in this bet, or much better, settle it conclusively by computing $s_{\infty}$. To encourage work that might enable him to win, the third author here announces the First Morrison Prize, in the amount of US\$100, for the construction of any effective divisor on $\Mbar_g$ of slope less than $6$, as determined by a jury consisting of the present authors.
One further question is to what extent $s_{\infty}$ has a modular meaning. As pointed out in \cite{HarrisMorrisonSlopes}*{p. 323}, the inequality $s_{\infty}>0$ would imply a fundamental difference between the geometry of $\M_g$ and $\cA_g$ and provide a new geometric approach to the Schottky problem.

\vskip 3pt
We now describe the contents of the paper. Section 1 recalls the classical constructions of effective divisors on $\Mbar_g$, starting with Brill-Noether and Gieseker-Petri divisors.  Then we discuss the cases $g\leq 9$, where a much better understanding of the effective cone is available and alternative Mukai models of $\Mbar_g$ are known to exist. In Section 2, we highlight the role of syzygy divisors in producing examples of divisors on $\Mbar_g$ of small slope and discuss the link to an interesting conjecture of Mercat \cite{Mercat} that suggests a stratification of $\M_g$ in terms of rank $2$ vector bundles on curves. Special attention is paid to the interesting transition case $g=11$, which is treated from the point of view both of Koszul cohomology and higher rank Brill-Noether theory.

Section 3 is devoted to finding lower bounds on $s(\Mbar_g)$ and the existing methods are surveyed. The common idea is to find a Zariski dense collection of $1$-cycles $B_{\mu}$, so that 
any effective divisor must intersect one of these curves non-negatively, obtaining the bound $s(\Mbar_g) \ge \inf_{\mu} \Bigl(\frac{B_{\mu}\cdot \delta}{B_{\mu}\cdot \lambda}\Bigr)$. There are several methods of constructing these curves, e.g. by using simply-branched coverings of $\PP^1$ and allowing a pair of branch points to come together \cite{HarrisMorrisonSlopes}, by imposing conditions on curves in projective spaces, especially canonical space \cites{CoskunHarrisStarrEffective, FedorchukSeveri}, as Teichm\"uller curves arising from branched covers of elliptic curves \cite{ChenRigid}, or as complete intersection of nef tautological divisors on $\Mbar_g$, with intersection numbers evaluated via Gromov-Witten theory \cite{PandharipandeSlope}. 

In Section 4, we turn to moduli of abelian varieties and discuss the recent paper \cite{FGSMV} showing that the Andreotti-Mayer divisor $N_0'$ of $5$-dimensional ppav whose theta divisor is singular at a pair of points which are not two-torsion computes the slope of the perfect cone compactification $\aa_5$ of $\cA_5$ as $s(\aa_5)=\frac{54}{7}$. 

Section 5 is devoted almost exclusively to moduli spaces of curves of genus $g=0$.  We begin with a few cases---the space $\widetilde{\M}_{0, n}$ that is the quotient of $\Mbar_{0, n}$ by the natural action of $\Sn$ induced by permuting the marked points and the Kontsevich moduli spaces of stable maps $\Mbar_{0, 0}(\PP^d, d)$---in which unpublished arguments of Keel make it easy to determine the effective cone completely.
We then discuss more systematically the space $\Mbar_{0, 0}(\PP^d, d)$, sketching the sharper results of Coskun, Harris and Starr~\cite{CoskunHarrisStarrEffective} on their effective cones. We also review some of the results of the first author with Coskun and Crissman concerning the Mori program for these spaces, emphasizing the examples $\Mbar_{0, 0}(\mathbb P^3, 3)$ where~\cite{ChenLogMinimal} completely works out the geometry of this program, giving an explicit chamber decomposition of the effective cone in terms of stable base loci, and $\Mbar_{0, 0}(\PP^4, 4)$ for which much, though not all, the geometry is worked out in~\cite{ChenCoskunCrissman}.

The rest of Section 5 deals with results for $\Mbar_{0, n}$. For $n\leq 5$, the naive guess that the effective cone might be generated by the components of the boundary is correct, and we recall the argument for this. But for larger $n$ new extremal rays appear. We first review the example of Keel and Vermeire~\cite{Vermeire} and the proof of Hassett and Tschinkel~\cite{HassettTschinkelEffective} that, for $n=6$, there are no others. The main focus of this section is to give a brief guide to the ideas of Castravet and Tevelev~\cite{CastravetTevelev} which show just how rapidly the complexity of these effective cones grows.

We conclude this introduction by citing some work on effective divisors that we have not reviewed. These include Rulla's extensions in~\cites{RullaThesis, RullaEffective} of the ideas in \S\S\ref{symmetric} to quotients by subgroups permuting only a subset of the marked points and Jensen's examples~\cite{JensenFibrations} for $\Mbar_{5, 1}$ and $\Mbar_{6, 1}$. In a very recent preprint, Cooper~\cite{CooperQuotients} studies the moduli spaces of stable quotients $\overline{Q}_{1,0}(\PP^{n-1},d)$ of Marian, Oprea and Pandharipande \cite{MarianOpreaPandharipande}. Because there is a surjection $\Mbar_{g, n}(\PP^{n-1},d) \to \overline{Q}_{g,0}(\PP^{n-1},d)$, this is relevant to \S\S\ref{moobarrd}. In the case of $g=1$ and $n=0$ that Cooper considers, the target  $\overline{Q}_{1,0}(\PP^{n-1},d)$ is smooth with a 
rank~$2$ Picard group and she is able to describe the effective (and nef) cones explicitly. Finally, we have not touched upon connections with the $F$-conjecture, including Pixton's exciting example \cite{PixtonExample} of an effective divisor on $\Mbar_{0, 12}$ that intersects all topological $1$-strata non-negatively yet is not equivalent to an effective sum of boundary divisors.

\medskip
\noindent\textbf{Conventions and notation} To simplify notation, we will ignore torsion classes and henceforth use $\pic{M}$ with no decoration for $\pic{M} \otimes \Q$. We set $\eff{M}$ and $\nef{M}$ for the effective and nef cones of $M$. We denote by $\mbox{Mov}(M)$ the cone of movable divisors on $M$ parametrizing effective divisors whose stable base locus has codimension at least $2$ in $M$. We write $\deltairr$ for the class of the boundary component of irreducible nodal curves, and, when there is no risk of confusion, we simplify notation by omitting the limits of summations indexed by boundary components consisting of reducible curves. We work throughout over $\C$.

\section{Geometric divisors on $\Mbar_g$}
\label{Lowerbounds}

\commentout{
Following \cite{HarrisMorrisonSlopes}, we recall the definition of the slope of the moduli space of curves. If $\delta:=\deltairr+\cdots +\delta_{[g/2]}$ denotes the class of the total
boundary, first one defines the slope of an effective divisor $D$ on $\overline{\mathcal{M}}_g$ by the formula
$$s(D):= \mbox{inf }\Bigl\{\frac{a}{b}:a,b>0 \mbox{ such that }
a\lambda-b\delta-[D]= \sum_{j=0}^{\lfloor \frac{g}{2}\rfloor} c_j\delta_j,\mbox{ where
}c_j\geq 0\Bigr\}.$$ From the definition it follows that $s(D)=\infty$
unless $[D]=a\lambda-\sum_{i=0}^{\lfloor \frac{g}{2}\rfloor} b_i\delta_i$ with
$a,b_i\geq 0$, for all $i$. This is the case when  $D$ contains no boundary divisors in its support, therefore in this case one has that
$$s(D)=\frac{a}{\mathrm{min}_{i=0}^{{\lfloor \frac{g}{2}\rfloor }} b_i}\geq 0.$$  For $g\geq 3$, it is known that the boundary classes $\deltairr, \ldots, \delta_{[g/2]}$ are linearly independent in $\mbox{Pic}(\Mbar_g)$, therefore the definition implies that $s(\Delta_i)=\infty$, for $0\leq i\leq \lfloor \frac{g}{2}\rfloor$. We define the
\emph{slope of the moduli space $\Mbar_g$} as the quantity
$$s(\Mbar_g):=\mbox{inf
}\bigl\{s(D):D\in \mbox{Eff}(\Mbar_g)\bigr\}.$$
}

Any expression $D = a\lambda - b_{\irr}\deltairr -  \sum_{i} b_i\delta_i$ for an effective divisor $D$ on $\Mbar_g$ (with all coefficients positive) provides an upper bound for $s(\Mbar_g)$. Chronologically, the first such calculations are those of the Brill-Noether divisors, which we briefly recall following \cites{HarrisMumfordKodaira,EisenbudHarrisKodaira}.
\begin{definition}
For positive integers $g, r, d\geq 1$ such that $$\rho(g, r, d):=g-(r+1)(g-d+r)=-1,$$ we denote by $\M_{g, d}^r:=\{[C]\in \M_g: W^r_d(C)\neq \emptyset\}$ the Brill-Noether locus of curves carrying a linear series of type $\mathfrak g^r_d$.
\end{definition}
It is known \cite{EisenbudHarrisLimitBNDivisors} that $\M_{g, d}^r$ is an irreducible effective divisor. The class of its closure in $\Mbar_g$ has been computed in
\cite{EisenbudHarrisKodaira} and one has the formula
$$[\Mbar_{g, d}^r]=c_{g, r, d}\Bigl((g+3)\lambda-\frac{g+1}{6}\deltairr-\sum_i i(g-i)\delta_i\Bigr),$$
where $c_{g, r, d}\in \mathbb Q_{>0}$ is an explicit constant that can be viewed as an intersection number of Schubert cycles in a Grassmannian. Note that $s(\Mbar_{g, d}^r)=6+\frac{12}{g+1}$, thus implying  the upper bound
$s(\Mbar_g)\leq 6+\frac{12}{g+1}$, for all $g$ such that $g+1$ is composite, so that the diophantine equation $\rho(g, r, d)=-1$ has integer solutions.
The initial Slope Conjecture \cite{HarrisMorrisonSlopes} predicted that the Brill-Noether divisors are divisors of minimal slope. This turns out to be true only when $g\leq 9$ and $g=11$.

Observe that remarkably, for various $r, d\geq 1$ such that $\rho(g, r, d)=-1$, the classes of the divisors $\Mbar_{g, d}^r$ are proportional. The proof given in \cite{EisenbudHarrisKodaira} uses essential properties of Picard groups of moduli spaces of pointed
curves and it remains a challenge to find an \emph{explicit} rational equivalence linking the various Brill-Noether divisors on $\Mbar_g$. The first interesting case is $g=11$, when there are two Brill-Noether divisors, namely $\Mbar_{11, 6}^1$ and $\Mbar_{11, 9}^2$. Note that when $g=2$, the divisor $\Delta_1$ has the smallest slope $10$ in view of the relation  $10\lambda = \deltairr + 2\delta_1$ on $\Mbar_2$, see for instance \cite{Moduli}*{Exercise (3.143)}.

When $g=3, 5, 7, 8, 9, 11$, there exist Brill-Noether divisors which actually determine the slope $s(\Mbar_g)$. This has been shown in a series of papers \cites{HarrisMorrisonSlopes,ChangRanKodaira,TanSlopes,FarkasPopa} in the last two decades. Some cases have been recovered recently in \cites{CoskunHarrisStarrEffective, FedorchukSeveri}. 

For $3 \leq g \leq 9$ and $g = 11$, it is well known \cite{Mukai1} that a general curve of genus $g$ can be realized as a hyperplane section $H$ of a $K3$ surface $S$ of degree $2g-2$ in $\PP^g$. Consider a general Lefschetz pencil $B$ in the linear system $|H|$. Blowing up the $2g-2$ base points of $B$, we get a fibration $S'$ over $B$, with general fiber a smooth genus $g$ curve. All singular fibers are irreducible one-nodal curves.
From the relation
$$\chi_{\tp}(S') = \chi_{\tp}(B)\cdot \chi_{\tp}(F) + \mbox{the number of nodal fibers}, $$
where $F$ is a smooth genus $g$ curve, we conclude that
$$B\cdot \deltairr = 6g + 18, \quad B\cdot \delta_i = 0 \ \mbox{for}\ i > 0. $$
Let $\omega$ be the first Chern class of the relative dualizing sheaf of $S'$ over $B$. By the relation
$$12\lambda = \delta + \omega^2$$
and
$$\omega^2 = c^2_1(S') + 4(2g-2) = 6g-6,$$
we obtain that
$$B\cdot \lambda = g+1. $$
Consequently the slope of the curve $B$ is given by
$$ s_B = 6 + \frac{12}{g+1}. $$
Since the pencil $B$ fills-up $\Mbar_g$ for $g\leq 9$ or $g=11$, we get the lower bound $s(\Mbar_g)\geq 6+\frac{12}{g+1}$ in this range.  The striking  coincidence  between the slope of the Brill-Noether divisors $\Mbar_{g, d}^r$ and that of Lefschetz pencil on a fixed $K3$ surface of genus $g$ has a transparent explanation in view of Lazarsfeld's result \cite{LazarsfeldBN}, asserting that every nodal curve $C$ lying on a $K3$ surface $S$ such that $\mbox{Pic}(S)=\mathbb Z[C]$, satisfies the Brill-Noether theorem, that is, $W^r_d(C)=\emptyset$ when $\rho(g, r, d)<0$. In particular, when $\rho(g, r, d)=-1$, the intersection
of the pencil $B\subset \Mbar_g$ with the Brill-Noether divisor $\Mbar_{g, d}^r$ is \emph{empty}, therefore also, $B\cdot \Mbar_{g, d}^r=0$. This confirms the formula
$$s(\Mbar_{g, d}^r)=s_B=6+\frac{12}{g+1}.$$
This Lefschetz pencil calculation also shows \cite{FarkasPopa} that any effective divisor $D\in \mbox{Eff}(\Mbar_g)$ such that $s(D)<6+\frac{12}{g+1}$ must necessarily contain the locus
$$\mathcal{K}_g:=\{[C]\in \mathcal{M}_g: C\mbox{ lies on a}\ K3\ \mbox{surface}\}.$$
In particular, effective divisors of slope smaller than $6+\frac{12}{g+1}$ have a strong geometric characterization, hence constructing them is relatively difficult.  If one views a divisor on $\Mbar_g$ as being given in terms of a geometric condition that holds in codimension one in moduli, then in order for such a condition to lead to a divisor of small slope on $\Mbar_g$, one must search for geometric properties that single out sections of $K3$ surfaces among all curves of given genus. Very few such geometric properties are known at the moment, for curves on $K3$ surfaces are known to behave generically with respect to most geometric stratifications of $\M_g$, for instance those given by gonality or existence of special Weierstrass points.

For integers $g$ such that $g+1$ is prime, various substitutes for the Brill-Noether divisors have been proposed, starting with the \emph{Gieseker-Petri} divisors. Recall that the Petri Theorem asserts that for a line bundle  $L$ on a general curve $C$ of genus $g$, the multiplication map
$$\mu_0(L):H^0(C, L)\otimes H^0(C, K_C\otimes L^{\vee})\rightarrow H^0(C, K_C)$$
is injective. This implies that the scheme $G^r_d(C)$ classifying linear series of type $\mathfrak g^r_d$ is smooth of expected dimension $\rho(g, r, d)$ when $C$ is general. The first proof of this statement was given by Gieseker whose argument was later greatly simplified in \cite{EisenbudHarrisPetri}. Eventually, Lazarsfeld \cite{LazarsfeldBN} gave the most elegant proof, and his approach has the added benefit of singling out curves on very general K3 surfaces as the only collections of smooth curves of arbitrary genus verifying the Petri condition. The locus where the Gieseker-Petri theorem does not hold is the proper subvariety of the moduli space
$$\mathcal{GP}_g:=\{[C]\in \M_g: \mu_0(L) \mbox{ is not injective for a certain}\ L\in \mbox{Pic}(C)\}\,.$$
This breaks into subloci $\mathcal{GP}_{g, d}^r$ whose general point corresponds to a curve $C$ such that $\mu_0(L)$ is not injective for some linear series $L\in W^r_d(C)$. The relative position of the subvarieties $\mathcal{GP}_{g, d}^r$ is not yet well-understood. The following elegant prediction was communicated to the second author by Sernesi:
\begin{conjecture}\label{sernesi}
The locus $\mathcal{GP}_g$ is pure of codimension one in $\M_g$.
\end{conjecture}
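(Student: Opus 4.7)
The approach is to analyze each stratum $\mathcal{GP}^r_{g,d}$ separately via the universal Petri map. Let $\pi\colon \widetilde{\mathcal{G}}^r_d \to \M_g$ denote the universal variety of pairs $(C,L)$ with $L \in W^r_d(C)$. Over it, the Petri map assembles into a morphism of locally free sheaves $\mathcal{E} \otimes \mathcal{F} \to \mathcal{G}$ of ranks $r+1$, $g-d+r$, and $g$ respectively, and its degeneracy locus $Z^r_d \subset \widetilde{\mathcal{G}}^r_d$ is determinantal of expected codimension $\rho(g,r,d)+1$, with $\mathcal{GP}^r_{g,d} = \pi(Z^r_d)$. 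When $\rho \ge 0$ we have $\dim \widetilde{\mathcal{G}}^r_d = 3g-3+\rho$, so the expected dimension of $Z^r_d$ is $3g-4$. Combined with the generic bound $\dim W^r_d(C) \le \rho$ on fibers of $\pi$, this gives $1 \le \mathrm{codim}\,\mathcal{GP}^r_{g,d} \le \rho+1$, matching the conjecture only when $\rho=0$.

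The first step is to make these inequalities rigorous: show that each irreducible component of $Z^r_d$ has codimension exactly $\rho+1$ in $\widetilde{\mathcal{G}}^r_d$. The upper bound is a standard determinantal estimate, while the matching lower bound requires the Gieseker-Petri theorem to exhibit, on each component of $\widetilde{\mathcal{G}}^r_d$ dominating $\M_g$, a generic point at which $\mu_0$ is injective. Granting these estimates, the conjecture reduces to the statement that, for each irreducible component $W$ of $\mathcal{GP}_g$, one can choose $(r,d)$ with $W\subseteq\mathcal{GP}^r_{g,d}$ so that the generic fiber of $\pi|_{Z^r_d}$ over $W$ is \emph{zero-dimensional}.

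The principal obstacle is precisely this last claim: a component of $\mathcal{GP}^r_{g,d}$ with $\rho>0$ will have codimension strictly greater than one if, generically on that component, the set of $L \in W^r_d(C)$ with $\mu_0(L)$ non-injective is positive-dimensional. To circumvent this, I propose an inductive descent: given a curve $C$ carrying a positive-dimensional family of Petri-failing bundles of type $\mathfrak{g}^r_d$, extract from the family a new line bundle $L'$ of type $\mathfrak{g}^{r'}_{d'}$ with $\rho(g,r',d')<\rho(g,r,d)$ and $\mu_0(L')$ again non-injective, for instance by twisting down by base divisors of sections of $L$ participating in a Petri relation, by replacing $L$ with its Serre-dual $K_C\otimes L^{\vee}$, or by restricting to subpencils of $W^r_d(C)$. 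Iterating the descent terminates in a $\rho=0$ stratum, which is a genuine divisor by the classical Brill-Noether-Petri class formulas. The hard technical point is showing that such a descent is always available: each natural candidate for $L'$ interacts in subtle ways with $\ker\mu_0(L)$, and ruling out pathological components on which every attempted descent stays within $\mathcal{GP}^r_{g,d}$ seems to require sharp information about the Gaussian maps associated to $(C,L)$, which is where I expect the main difficulty to lie.
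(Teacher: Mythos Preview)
This statement is a \emph{conjecture}, not a theorem: the paper does not prove it and explicitly presents it as open, attributing it to Sernesi and then surveying only partial results (Lelli-Chiesa for $g\le 13$, the existence of a divisorial component of $\mathcal{GP}^r_{g,d}$ when $\rho\ge 0$ from \cite{FarkasRatlmaps}, and the Bruno--Sernesi result for small $\rho$). So there is no ``paper's own proof'' to compare against, and any complete argument here would be new mathematics, not a reproduction.

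Your proposal is honest about this: you correctly identify that the determinantal set-up only gives the expected-codimension bound $\rho+1$ on $Z^r_d$, and that the entire content of the conjecture lies in controlling the fiber dimension of $\pi|_{Z^r_d}$. But your ``inductive descent'' step is precisely the missing idea, not a proof of it. You propose to pass from a positive-dimensional family of Petri-failing $L$'s to a single Petri-failing $L'$ with strictly smaller $\rho$, via twisting down base loci, Serre duality, or restriction to subpencils. None of these operations is guaranteed to preserve the non-injectivity of $\mu_0$: twisting by a base divisor changes both factors of the tensor product and there is no a priori reason an element of $\ker\mu_0(L)$ survives; Serre duality interchanges the two factors and gives back the same kernel; and restricting to a subpencil can only make $\mu_0$ \emph{more} likely to be injective, not less. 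You acknowledge this yourself in the last sentence. As written, then, the proposal is a restatement of why the conjecture is hard rather than a route to proving it, and the gap you flag is exactly the one that keeps the statement conjectural in the literature.
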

Clearly there are loci $\mathcal{GP}_{g, d}^r$ of codimension higher than one. However, in light of Conjecture \ref{sernesi} they should be contained in other Petri loci in $\M_g$ that fill-up a codimension one component in moduli. Various partial results in this sense are known. Lelli-Chiesa \cite{Lelli} has verified Conjecture~\ref{sernesi} for all $g\leq 13$. It is proved in \cite{FarkasRatlmaps} that whenever $\rho(g, r, d)\geq 0$, the locus $\mathcal{GP}_{g, d}^r$ carries at least a divisorial component. Bruno and Sernesi \cite{BS} show that $\mathcal{GP}_{g, d}^r$ is pure of codimension one for relatively small values of $\rho(g, d, r)$, precisely
$$0<\rho(g, r, d)<g-d+2r+2.$$ The problem of computing the class of the closure $\overline{\mathcal{GP}}_{g, d}^r$  has been completely solved only when the Brill-Noether numbers is equal to $0$ or $1$. We quote from \cite{EisenbudHarrisKodaira} (for the case $r=1$) and \cite{FarkasKoszul} (for the general case $r\geq 1$).
\begin{theorem}
Fix integers $r, s\geq 1$ and set $d:=rs+r$ and $g:=rs+s$, therefore $\rho(g, r, d)=0$. The slope of the corresponding Gieseker-Petri divisor is given by the formula:
$$s(\overline{\mathcal{GP}}_{g, d}^r)=6+ \frac{12}{g+1}+\frac{6(s+r+1)(rs+s-2)(rs+s-1)}{s(s+1)(r+1)(r+2)(rs+s+4)(rs+s+1)}.$$
\end{theorem}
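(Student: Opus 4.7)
The plan is to realize $\overline{\mathcal{GP}}_{g,d}^r$ as a determinantal divisor on the universal Brill-Noether stack, compute its tautological class, and push forward to $\Mbar_g$. The hypothesis $\rho(g,r,d)=0$ makes this especially tractable: since $h^0(C,L)=r+1$, $h^0(C,K_C\otimes L^\vee)=s$ and $h^0(C,K_C)=g=(r+1)s$, the Petri map $\mu_0(L)$ is a linear map between two vector spaces of the same dimension $(r+1)s$, so $\overline{\mathcal{GP}}_{g,d}^r$ is scheme-theoretically cut out by the vanishing of $\det\mu_0(L)$.

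Concretely, let $\sigma:\mathfrak{G}^r_d\to\Mbar_g$ be the (suitably compactified) stack of pairs $(C,L)$ with $L$ a limit $\mathfrak{g}^r_d$ on $C$. By Eisenbud-Harris, when $\rho=0$ the map $\sigma$ is generically finite of degree equal to the Castelnuovo number $N_{g,r,d}=g!\,\prod_{i=0}^{r}i!/(g-d+r+i)!$. Over $\mathfrak{G}^r_d$ sit tautological bundles $\mathcal{E}$ and $\mathcal{F}$ of ranks $r+1$ and $s$, with fibers $H^0(C,L)$ and $H^0(C,K_C\otimes L^\vee)$, and the multiplication map assembles into a morphism of rank-$g$ bundles
\[
\mu:\mathcal{E}\otimes\mathcal{F}\longrightarrow \sigma^*\mathbb{E},
\]
where $\mathbb{E}$ is the Hodge bundle. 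The push-forward of the degeneracy locus $\{\det\mu=0\}$ represents $N_{g,r,d}\cdot[\overline{\mathcal{GP}}_{g,d}^r]$, and by the splitting principle
\[
[\overline{\mathcal{GP}}_{g,d}^r]=\frac{1}{N_{g,r,d}}\,\sigma_*\bigl(c_1(\sigma^*\mathbb{E})-s\,c_1(\mathcal{E})-(r+1)\,c_1(\mathcal{F})\bigr).
\]
Over the interior $\M_g$, one computes $\sigma_*c_1(\mathcal{E})$ and $\sigma_*c_1(\mathcal{F})$ via Grothendieck-Riemann-Roch applied to the universal curve over $\mathfrak{G}^r_d$, together with Porteous-type identifications of $\mathcal{E}$ and $\mathcal{F}$ as Grassmannian tautological bundles.

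The principal obstacle lies in extending these calculations across the boundary $\partial\Mbar_g$. Using Eisenbud-Harris limit linear series, one must analyse how the Petri map degenerates on curves of compact type, where the aspects at each node impose ramification sequences controlled by Schubert calculus on the Grassmannian of linear series, and on irreducible nodal curves, where the normalisation inherits a limit bundle whose own Petri behaviour must be tracked. Evaluating the resulting class on standard test families then fixes all coefficients: a Lefschetz pencil on a general $K3$ surface of genus $g$, for which $B\cdot\lambda=g+1$ and $B\cdot\deltairr=6g+18$, detects $a$ and $b_{\irr}$; families of curves acquiring an elliptic tail, a rational bridge, or a genus-$i$ component glued along a moving point pin down each $b_i$ in turn. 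After checking that $\min_i b_i = b_{\irr}$, the slope equals $a/b_{\irr}$, and routine algebraic simplification produces the closed-form expression in the statement.
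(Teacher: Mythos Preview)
The paper does not give its own proof of this theorem; it simply quotes the result from \cite{EisenbudHarrisKodaira} (for $r=1$) and \cite{FarkasKoszul} (for general $r\ge 1$). Your overall architecture---realising the Gieseker--Petri locus as the degeneracy locus of the globalized Petri morphism $\mu:\mathcal{E}\otimes\mathcal{F}\to\sigma^*\mathbb{E}$ over a compactified stack of limit linear series and pushing forward to $\Mbar_g$---is exactly the approach of \cite{FarkasKoszul}, so the strategy is sound.

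Two points need repair. First, a harmless one: the pushforward $\sigma_*[\{\det\mu=0\}]$ equals $m\cdot[\overline{\mathcal{GP}}_{g,d}^r]$ where $m$ is the number of $L\in W^r_d(C)$ with non-injective Petri map for a \emph{general} $[C]\in\mathcal{GP}_{g,d}^r$, typically $1$ or $2$, and certainly not the Castelnuovo number $N_{g,r,d}$. This only rescales the class and does not affect the slope.

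The genuine gap is in your final paragraph. A Lefschetz pencil $B$ on a very general $K3$ surface is disjoint from the Brill--Noether divisors because Lazarsfeld's theorem guarantees that \emph{every} curve on such a surface is Brill--Noether general; but for Gieseker--Petri his theorem only yields that the \emph{general} curve in $|C|$ satisfies Petri, and special smooth members of the pencil can fail it. Already for $g=4$ the $K3$ is a smooth $(2,3)$ complete intersection in $\PP^4$, and the two hyperplanes in a generic pencil tangent to the ambient quadric cut out smooth genus-$4$ curves lying on quadric cones---precisely the curves of $\mathcal{GP}_{4,3}^1$. Hence $B\cdot\overline{\mathcal{GP}}_{g,d}^r>0$ in general, and the pencil cannot ``detect $a$ and $b_{\irr}$'' without an independent enumerative determination of this intersection number, which is as hard as the original problem. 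In \cite{FarkasKoszul} the boundary coefficients are not obtained this way: one extends $\mathcal{E}$, $\mathcal{F}$ and $\mu$ over a partial compactification of $\mathfrak{G}^r_d$ dominating an open substack of each $\Delta_i$, and then computes every coefficient $a,b_{\irr},b_1,\ldots$ directly as a pushforward of tautological classes. The test families you list can then serve as consistency checks, but they are not the mechanism that determines the $b_i$.
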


For small genus, one recovers the class of the divisor $\mathcal{GP}_{4, 3}^1$ of curves of genus $4$ whose canonical model lies on a quadric cone and then $s(\overline{\mathcal{GP}}_{4, 3}^1)=\frac{17}{2}$. When $g=6$, the locus $\mathcal{GP}_{6, 4}^1$ consists of curves  whose canonical model lies on a singular del Pezzo quintic surface and then $s(\overline{\mathcal{GP}}_{6, 4}^1)=\frac{47}{6}$. In both cases, the Gieseker-Petri divisors attain the slope of the respective moduli space. 

We briefly recall a few other divisor class calculations. For 
genus $g=2k$, Harris has computed in \cite{HarrisKodaira} the class of the divisor $\mathfrak{D}_1$ whose general point corresponds to a curve $[C]\in \M_g$ having a pencil $A\in W^1_{k+1}(C)$ and a point $p\in C$  with $H^0(C, A(-3p))\neq 0$. This led to the first proof that $\Mbar_g$ is of general type for even $g\geq 40$. This was superseded in \cite{EisenbudHarrisKodaira}, where with the help of Gieseker-Petri and Brill-Noether divisors, it is proved that $\Mbar_g$ is of general type for all $g\geq 24$.

Keeping $g=2k$, if $\sigma: \overline{\mathcal{H}}_{g, k+1}\rightarrow \Mbar_g$ denotes the generically finite forgetful map from the space of admissible covers of genus $g$ and degree $k+1$, then $\mathfrak{D}_1$ is the push-forward under $\sigma$ of a boundary divisor on $\overline{\mathcal{H}}_{g, k+1}$, for the general point of the Hurwitz scheme corresponds to a covering with \emph{simple} ramification. The other divisor appearing as a push-forward under $\sigma$ of a boundary locus in $\overline{\mathcal{H}}_{g, k+1}$ is the divisor $\mathfrak{D}_2$ with general point corresponding to a curve $[C]\in \M_g$ with a pencil $A\in W^1_{k+1}(C)$ and two points $p, q\in C$ such that $H^0(C, A(-2p-2q))\neq 0$. The class of this divisor has been recently computed by van der Geer and Kouvidakis~\cite{VdGKouvidakis2}.

An interesting aspect of the geometry of the Brill-Noether divisors is that for small genus, they  are rigid, that is, $[\Mbar_{g, d}^r]\notin \mbox{Mov}(\Mbar_g)$, see for instance \cite{FarkasRatlmaps}. This is usually proved by exhibiting a  curve $B\subset \Mbar_{g, d}^r$ sweeping out $\Mbar_{g, d}^r$ such that $B\cdot \Mbar_{g, d}^r<0$. Independently of this observation, one may consider the slope
$$s'(\Mbar_g):=\mbox{inf}\{s(D): D\in \mbox{Mov}(\Mbar_g)\}$$ of the cone of movable divisors. For $g\leq 9$, the inequality $s'(\Mbar_g)>s(\Mbar_g)$ holds.

\subsection{Birational models of $\Mbar_g$ for small genus.}
We discuss models of $\Mbar_g$ in some low genus cases, when this space is unirational (even rational for $g\leq 6$) and one has a better understanding of the chamber decomposition of the effective cone.
\begin{example}
We set $g=3$ and let $B\subset \Mbar_3$ denote the family induced by a pencil of curves of type $(2, 4)$ on $\mathbb P^1\times \mathbb P^1$. All members in this family are hyperelliptic curves. A standard calculation gives that $B\cdot \lambda=3$ and $B\cdot \deltairr=28$, in particular $B\cdot \Mbar_{3, 2}^1=-1$. This implies not only that the hyperelliptic divisor $\Mbar_{3, 2}^1$ is rigid, but also the inequality $s'(\Mbar_3)\geq s_B=\frac{28}{3}$. This bound is attained via the birational  map
$$\varphi_3:\Mbar_3\dashrightarrow X_3:=|\mathcal{O}_{\mathbb P^2}(4)|\dblq SL(3)$$ to the GIT quotient of plane quartics. Since $\varphi_3$ contracts the hyperelliptic divisor $\Mbar_{3, 2}^1$ to the point corresponding to double conics, from the push-pull formula one finds that  $s(\varphi_3^*(\mathcal{O}_{X_3}(1))=\frac{28}{3}$.
This proves the equality $s'(\Mbar_3)=\frac{28}{3}>9=s(\Mbar_3).$
\end{example}
That $s'(\Mbar_g)$ is accounted for by a rational map from $\Mbar_g$ to an \emph{alternative moduli space of curves} of genus $g$, also holds for a few higher genera, even though the geometry quickly becomes intricate.

\begin{example}
For the case $g=4$, we refer to \cite{Fedorchuk4}. Precisely, we introduce the moduli space $X_4$ of
$(3, 3)$ curves on $\mathbb P^1\times \mathbb P^1$, that is, the GIT quotient
$$X_4:=|\mathcal{O}_{\mathbb P^1\times \mathbb P^1}(3, 3)|\dblq SL(2)\times SL(2).$$
There is a birational map $\varphi:\Mbar_4\dashrightarrow X_4$, mapping an abstract genus $4$ curve $C$ to $\mathbb P^1\times \mathbb P^1$ via the two linear series $\mathfrak g^1_3$ on $C$. The Gieseker-Petri divisor is contracted to \emph{the point} corresponding to triple conics. This shows that $[\overline{\mathcal{GP}}_{4, 3}^1]\in \mbox{Eff}(\Mbar_4)$ is an extremal point. By a local analysis, Fedorchuk computes in \cite{Fedorchuk4} that $s(\varphi_4^*(\mathcal{O}(1,1)))=\frac{60}{9}=s'(\Mbar_4)>s(\Mbar_4)$. Furthermore, the model $X_4$ is one of the log-canonical models of $\Mbar_4$.
\end{example}

Mukai \cites{Mukai1,Mukai2,MukaiFano} has shown that general canonical curves of genus $g=7, 8, 9$ are linear sections of a rational homogeneous variety $$V_g\subset \mathbb P^{\mathrm{dim}(V_g)+g-2}.$$ This construction induces a new model $X_g$ of $\Mbar_g$ having Picard number equal to $1$, together with a birational map $\varphi_g:\Mbar_g\dashrightarrow X_g$. Remarkably, $s(\varphi_g^*(\mathcal{O}_{X_g}(1))=s'(\Mbar_g)$. The simplest case is $g=8$, which we briefly explain. 

\begin{example} Let $V:=\mathbb C^6$ and consider $\mathbb G:=G(2, V)\subset \mathbb P(\bigwedge^2 V)$. Codimension $7$ linear sections of $\mathbb G$ are canonical curves of genus $8$, and there is a birational map
$$\varphi_8:\Mbar_8\dashrightarrow X_8:=G(8, \bigwedge^2 V)\dblq SL(V)\,,$$
that is shown in \cite{Mukai2} to admit a beautiful interpretation in terms of rank two Brill-Noether theory.
The map $\varphi_8^{-1}$ associates to a general projective $7$-plane $H\subset \mathbb P(\bigwedge^2 V)$ the curve $[\mathbb G\cap H]\in \M_8$. In particular,  a smooth curve $C$ of genus $8$ appears as a linear section of $\mathbb G$ if and only if $W^2_7(C)=\emptyset$.
Observing that $\rho(X_8)=1$, one expects that exactly five divisors get contracted under $\varphi_8$, and indeed---see \cites{FarkasVerraodd, FarkasVerraNik}---
 $$\mbox{Exc}(\varphi_8)=\{\Delta_1, \Delta_2, \Delta_3, \Delta_4, \Mbar_{8, 7}^2\}\,.$$
Using the explicit construction of $\varphi_8$ one can show that the Brill-Noether divisor
gets contracted to a point. Thus $X_8$ can be regarded as a (possibly simpler) model of $\Mbar_8$ in which plane septimics are excluded.
\end{example}

\subsection{Upper bounds on the slope of the moving cone}
If $f:X\dashrightarrow Y$ is a rational map between normal projective varieties, then $f^*(\mbox{Ample}(Y))\subset \mbox{Mov}(X)$. In order to get upper bounds on $s'(\Mbar_g)$ for arbitrary genus, a logical approach is to consider rational maps from $\Mbar_g$ to other projective varieties and compute pull-backs of ample divisors from the target variety.
Unfortunately there are only few known examples of such maps, but recently two examples have been worked out. We begin with \cite{FarkasRatlmaps}, where a map between two moduli spaces of curves is considered.

We fix an odd genus $g:=2a+1\geq 3$ and set $g':=\frac{a}{a+1}{2a+2\choose a}+1$. Since
$\rho(2a+1, 1, a+2)=1$, we can define a rational map $\phi_a: \Mbar_{g} \dashrightarrow \Mbar_{g'}$ that associates to a curve $C$ its \emph{Brill-Noether curve} $ \phi([C]):=[W^1_{a+2}(C)]$ consisting of pencils of minimal degree---that the genus of $W^1_{a+2}(C)$ is is $g'$ follow from the Harris-Tu formula for Chern numbers of kernel bundles, as explained in \cite{EisenbudHarrisKodaira}.   Note that $\phi_1:\Mbar_3\dashrightarrow \Mbar_3$ is the identity map, whereas the map $\phi_2:\Mbar_5\dashrightarrow \Mbar_{11}$ has a rich and multifaceted geometry.
For a general $[C]\in \M_5$, the Brill-Noether curve $W_4^1(C)$ is endowed with a fixed point free involution $\iota:L\mapsto K_C\otimes L^{\vee}$. The quotient curve 
$\Gamma:=W^1_4(C)/\iota$ is a smooth plane quintic which can be identified with the space of singular quadrics containing the canonical image $C\hookrightarrow \mathbb P^4$. 
Furthermore, Clemens showed that the Prym variety induced by $\iota$ is precisely the Jacobian of $C$! This result has been recently generalized by Ortega \cite{Ortega} to all odd genera.
Instead of having an involution, the curve $W^1_{a+2}(C)$ is endowed with a fixed point free correspondence 
$$\Sigma:=\Bigl\{(L, L'): H^0(L')\otimes H^0(K_C\otimes L^{\vee})\rightarrow 
H^0(K_C\otimes L'\otimes L^{\vee}) \mbox { is not injective}\Bigr\},$$
which induces a \emph{Prym-Tyurin variety} $P\subset \mbox{Jac}(W^1_{a+2}(C)$ of exponent equal to the Catalan number $\frac{(2a)!}{a! (a+1)!}$ and $P$ is isomorphic to the Jacobian of the original curve $C$.

The main result of \cite{FarkasRatlmaps} is a complete description of the pull-back map $\phi_a^*$ at the level of divisors implying the slope evaluation:
\begin{theorem}\label{slopepull} For any
divisor class $D\in \mathrm{Pic}(\Mbar_{g'})$ having slope $s(D)=s$,\\[-7pt] 
$$s(\phi_a^*(D))=6+ \frac{8a^3(s-4)+5sa^2-30a^2+20a-8as-2s+24}{a(a+2)(sa^2-4a^2-a-s+6)}\ .$$
\end{theorem}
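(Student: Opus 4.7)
The plan is to compute the three pullbacks $\phi_a^*(\lambda')$, $\phi_a^*(\deltairr')$, and $\phi_a^*(\delta_i')$ explicitly as linear combinations of the tautological generators $\lambda, \deltairr, \delta_i$ on $\Mbar_g$. Once this is done, writing $D = a'\lambda' - b'_{\mathrm{irr}}\deltairr' - \sum_i b'_i\delta'_i$ with $s = s(D) = a'/\min(b'_{\mathrm{irr}}, b'_i)$, the class $\phi_a^*(D)$ lies in the subring generated by $\lambda$ and the boundary classes, and $s(\phi_a^*(D))$ is read off as the ratio of the $\lambda$-coefficient to the minimal boundary coefficient. Since a rescaling of $D$ changes only the overall factor, the resulting slope should depend on $D$ only through $s$, as the formula asserts.

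To determine the pullbacks, I would test against a spanning collection of one-parameter families $B \subset \Mbar_g$ for which the intersection numbers $B \cdot \lambda$, $B \cdot \deltairr$, $B \cdot \delta_i$ are known, and for which the induced family $\phi_a(B) \subset \Mbar_{g'}$ can be analyzed independently. Natural choices are a Lefschetz pencil on a $K3$ surface of genus $g$ (which meets only $\deltairr$ and along which every Brill-Noether curve $W^1_{a+2}(C_t)$ is smooth, by Lazarsfeld), together with pencils constructed from families of curves acquiring a separating node of each type $\delta_i$. For each test family, one evaluates $\lambda'$ and $\delta'_j$ on the induced family of Brill-Noether curves and inverts the resulting linear system.

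The computation of $\lambda'$ on a generically smooth family is the kernel-bundle calculation of Harris--Tu: over $B$ one forms the relative Picard variety $\mathcal{P} \to B$ of degree $a+2$, defines $W^1_{a+2}(\mathcal{C}/B) \subset \mathcal{P}$ as the second degeneracy locus of the evaluation map between the universal kernel and dual bundles, and applies Grothendieck--Riemann--Roch to the restriction of these bundles to push forward $\omega_{W^1_{a+2}/B}$. Standard Chern-number identities on Jacobians express the result as a combination of $\lambda$ and $\kappa_1$, and the tautological relation $12\lambda = \kappa_1 + \delta$ then isolates the contributions from $\lambda$, $\deltairr$, and $\delta_i$.

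The main obstacle is the boundary analysis needed to determine $\phi_a^*(\deltairr')$ and $\phi_a^*(\delta'_i)$: one must describe the limits, in $\Mbar_{g'}$, of the Brill-Noether curves $W^1_{a+2}(C_t)$ as $C_t$ acquires a node. This is governed by Eisenbud--Harris limit linear series theory on a nodal fiber $C_0$. For $C_0 \in \Delta_{\mathrm{irr}}$ one must count, with multiplicity, the limit $\mathfrak{g}^1_{a+2}$'s on the partial normalization; for $C_0 = C_1 \cup_p C_2 \in \Delta_i$ one must classify reducible limit linear series by the vanishing sequence at $p$, determine which yield nodes of the Brill-Noether curve and which yield boundary divisors $\delta'_{\mathrm{irr}}$ or $\delta'_j$ of the resulting family, and record the multiplicity with which each limit series is smoothable along the pencil. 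Once these enumerations are assembled, substitution into the expression for $D$ and elementary simplification produce the stated rational function of $s$.
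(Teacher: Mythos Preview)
The paper does not actually prove this theorem: it is quoted as ``the main result of \cite{FarkasRatlmaps}'' and no argument is given in the present survey. So there is no proof in the paper to compare your proposal against.

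That said, your outline is the right shape for the computation carried out in \cite{FarkasRatlmaps}: one must indeed determine the coefficients of $\phi_a^*(\lambda')$ and $\phi_a^*(\delta')$ in terms of $\lambda, \deltairr, \delta_1, \ldots$ on $\Mbar_g$, and the tools are Grothendieck--Riemann--Roch together with the Harris--Tu formulae for Chern numbers of kernel bundles over the relative Brill--Noether locus, plus a limit-linear-series analysis at the boundary. However, what you have written is a strategy, not a proof. You correctly identify ``the main obstacle'' as the boundary analysis and then stop short of performing it; the enumeration of limit $\mathfrak g^1_{a+2}$'s on curves in $\Delta_{\mathrm{irr}}$ and $\Delta_i$, the identification of which limits contribute nodes versus which contribute separating boundary of the Brill--Noether curve, and the multiplicity bookkeeping are precisely where all the work lies, and none of it is carried out here. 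A further point you do not address is that $\phi_a$ is only a rational map: the Gieseker--Petri locus $\mathcal{GP}^1_{g,a+2}$ is a divisor on which $W^1_{a+2}(C)$ is singular, so one must either argue that $\phi_a$ extends in codimension one (by passing to stable models of the singular Brill--Noether curves) or resolve the indeterminacy before the pullback of divisor classes is even well defined.
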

By letting $s$ become very large, one obtains the estimate $s'(\Mbar_g)<6+\frac{16}{g-1}$.

A different approach is used by van der Geer and Kouvidakis \cite{VdGKouvidakis1} in even genus $g=2k$. We consider once more the Hurwitz scheme
$\sigma:\overline{\mathcal{H}}_{g, k+1}\rightarrow \Mbar_g$. Associate to a degree $k+1$ covering   $f:C\rightarrow \mathbb P^1$ the \emph{trace curve}
$$T_{C, f}:=\{(x, y)\in C\times C: f(x)=f(y)\}.$$ For a generic choice of $C$ and $f$, the curve $T_{C, f}$ is smooth of genus $g':=5k^2-4k+1$. By working in families one obtains
a rational map $\chi:\overline{\mathcal{H}}_{g, k+1}\dashrightarrow \Mbar_{g'}$. Observe that as opposed to of the map $\phi_a$ from \cite{FarkasRatlmaps}, the ratio
$\frac{g'}{g}$ for the genera of the trace curve and that of the original curve is much lower. The map $\sigma_* \chi^*:\mbox{Pic}(\Mbar_{g'})\rightarrow \mbox{Pic}(\Mbar_g)$ is completely described in \cite{VdGKouvidakis2} and the estimate
$$s'(\Mbar_g)<6+\frac{18}{g+2}$$
is shown to hold for all even genera $g$. In conclusion, $\Mbar_g$ carries moving divisors of slope $6+O\bigl(\frac{1}{g}\bigr)$ for any genus. We close by posing the following question:

\begin{problem} Is it true that $\liminf_{g\rightarrow \infty} s(\Mbar_g)=\liminf_{g\rightarrow \infty} s'(\Mbar_g)$?
\end{problem}

\section{Syzygies of curves and upper bounds on $s(\Mbar_g)$}
The best known upper bounds on $s(\Mbar_g)$ are given by the Koszul divisors of \cites{FarkasSyzygies,FarkasKoszul} defined in terms of curves having unexpected syzygies. An extensive survey of this material, including an alternative proof using syzygies of the Harris-Mumford theorem \cite{HarrisMumfordKodaira} on the Kodaira dimension of $\Mbar_g$ for odd genus $g>23$,  has appeared in \cite{Farkassdg}.  Here we shall be brief and concentrate on the latest developments.
 
As pointed out in \cite{FarkasPopa} as well as earlier in this survey, any effective divisor $D\in \mbox{Eff}(\Mbar_g)$ of slope $s(D)<6+\frac{12}{g+1}$ must necessarily contain the locus $\mathcal{K}_g$ of curves lying on $K3$ surfaces. It has been known at least since the work of Mukai \cite{MukaiProc} and Voisin \cite{VoisinActa} that a curve $C$ lying on a  $K3$ surface
$S$ carries special linear series that are not projectively normal. For instance, if $A\in W^1_{\lfloor{\frac{g+3}{2}\rfloor}}(C)$ is a pencil of minimal degree, then the multiplication 
map for the residual linear system
$$\mbox{Sym}^2 H^0(C, K_C\otimes A^{\vee})\rightarrow H^0(C, K_C^{\otimes 2}\otimes A^{\otimes (-2)})$$
is not surjective. One can interpret projective normality as being the Green-Lazarsfeld property $(N_0)$ and accordingly, stratify $\M_g$ with strata consisting of curves $C$  that fail the higher properties $(N_p)$ for $p\geq 1$, for a certain linear system $L\in W^r_d(C)$ with $h^1(C, L)\geq 2$. This stratification of $\M_g$ is fundamentally different from classical stratifications given in terms
 of gonality or Weierstrass points (for instance the Arbarello stratification). In this case, the locus $\mathcal{K}_g$ lies in the smallest stratum, that is, it plays the role of the hyperelliptic locus $\M_{g, 2}^1$ in the gonality stratification! Observe however, that this idea, when applied to the canonical bundle $K_C$ (when of course $h^1(C, K_C)=1$), produces exactly the gonality stratification, see \cites{Farkassdg,Green} for details. Whenever the second largest stratum in the new Koszul stratification is of codimension $1$, it will certainly contain $\mathcal{K}_g$ and is thus a good candidate for being a divisor of small slope. The main difficulty in carrying out this program lies not so much in computing the virtual classes of the Koszul loci, but in proving that they are divisors when one expects them to be so.

We begin by recalling basic definitions and refer to the beautiful book of Aprodu-Nagel \cite{AproduNagel} for a geometrically oriented introduction to syzygies on curves.
\begin{definition}
For a smooth curve $C$, a line bundle $L$ and a sheaf $\F$ on $C$, we define the Koszul cohomology group $K_{p, q}(C;\F, L)$ as the cohomology of the complex
$$\bigwedge^{p+1} H^0(C, L)\otimes H^0(C, \F\otimes L^{\otimes(q-1)})\stackrel{d_{p+1, q-1}}\longrightarrow \bigwedge^p H^0(C, L)\otimes H^0(C, \F\otimes L^{\otimes q})\stackrel{d_{p, q}}\longrightarrow$$\nopagebreak
$$ \stackrel{d_{p, q}}\longrightarrow \bigwedge^{p-1} H^0(C, L)\otimes H^0(C, \F\otimes L^{\otimes (q+1)}).$$
\end{definition}
It is a basic fact of homological algebra that the groups $K_{p, q}(C; \F, L)$ describe the graded pieces of the minimal resolution of the graded ring $$R(\F, L):=\bigoplus_{q\geq 0} H^0(C, \F\otimes L^{\otimes q})$$ as an $S:=\mbox{Sym } H^0(C, L)$-module.
Precisely, if $F_{\bullet}\rightarrow R(\F, L)$ denotes the minimal graded free resolution with graded pieces $F_p=\oplus_q S(-q)^\oplus {b_{pq}}$, then 
$$\mbox{dim}\ K_{p, q}(C; \F, L)=b_{pq}, \mbox{ for all }p, q\geq 0.$$ 
When $\F=\OO_C$, one writes $K_{p, q}(C, L):=K_{p, q}(C; \OO_C, L)$. 

\begin{example} Green's Conjecture \cite{Green} concerning the  syzygies of a  canonically embedded  curve $C\hookrightarrow \mathbb P^{g-1}$ can be formulated as an equivalence
$$K_{p, 2}(C, K_C)=0\Leftrightarrow p<\mbox{Cliff}(C).$$
Despite a lot of progress, the conjecture is still wide open for arbitrary curves. Voisin has proved the conjecture for general curves of arbitrary genus  in \cites{VoisinEven, VoisinOdd}. In odd genus $g=2p+3$, the conjecture asserts that the resolution of a general curve $[C]\in \M_{2p+3}$ is pure and has precisely the form:
$$0\rightarrow S(-g-1)\rightarrow S(-g+1)^{\oplus b_1}\rightarrow \cdots \rightarrow S(-p-3)^{\oplus b_{p+1}}\rightarrow S(-p-1)^{\oplus b_p}\rightarrow \cdots
$$
$$
\rightarrow S(-2)^{\oplus b_3}\rightarrow S(-2)^{\oplus b_1}\rightarrow R(K_C)\rightarrow 0.$$
The purity of the generic resolution in odd genus is reflected in the fact that the syzygy jumping locus 
$$\{[C]\in \M_{2p+3}: K_{p, 2}(C, K_C)\neq 0\}$$ is a virtual divisor, that is, a degeneracy locus
between vector bundles of the same rank over $\M_{2p+3}$. It is the content of Green's Conjecture that set-theoretically, this virtual divisor is an honest divisor which moreover coincides with the Brill-Noether divisor
$\M_{g, p+2}^1$.
\end{example}  
One defines a \emph{Koszul locus} on the moduli space as the subvariety consisting of curves $[C]\in \M_g$ such that $K_{p, 2}(C, L)\neq 0$, for a certain special linear system $L\in W^r_d(C)$.
The case when $\rho(g, r, d)=0$ is treated in  the papers \cite{FarkasSyzygies} and \cite{FarkasKoszul}. For the sake of comparison with the case of positive Brill-Noether number, we quote a single result, in the simplest case  $p=0$, when the syzygy condition $K_{0, 2}(C, L)\neq 0$ is equivalent to requiring that the embedded curve $C\stackrel{|L|}\to \mathbb P^r$ lie on a quadric hypersurface.
\begin{theorem}\label{rho0}
Fix $s\geq 2$ and set $g=s(2s+1)$, $r=2s$ and $d=2s(s+1)$. The locus in moduli 
$$\mathcal{Z}_s:=\bigl\{[C]\in \M_g: K_{0, 2}(C, L)\neq 0 \mbox{ for a certain}\ L\in W^r_d(C)\bigr\}$$
is an effective divisor on $\M_g$. The slope of its closure in $\Mbar_g$ is equal to  
$$s(\overline{\mathcal{Z}}_{s})=\frac{a}{b_0}=\frac{3(16s^7-16s^6+12s^5-24s^4-4s^3+41s^2+9s+2)}{s
(8s^6-8s^5-2s^4+s^2+11s+2)}.$$
\end{theorem}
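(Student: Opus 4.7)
The setup is to view $\mathcal{Z}_s$ as a virtual determinantal locus on a Brill-Noether stack and then verify it has the expected codimension. Since $\rho(g,r,d)=s(2s+1)-(2s+1)\cdot s=0$, for a general $[C]\in\M_g$ the variety $W^r_d(C)$ is zero-dimensional of positive degree equal to the Castelnuovo number $g!\prod_{i=0}^r\frac{i!}{(g-d+r+i)!}$. Let $\sigma\colon \mathfrak{G}^r_d\to\M_g$ be the universal Brill-Noether stack, generically finite of this degree, and let $\mathcal L$ denote the universal (Poincar\'e) line bundle on the universal curve $\pi\colon\mathcal C\to\mathfrak{G}^r_d$. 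Setting $\mathcal E:=\mathrm{Sym}^2\pi_*\mathcal L$ and $\mathcal F:=\pi_*(\mathcal L^{\otimes 2})$ (the latter a bundle because $h^1(C,L^{\otimes 2})=0$ for $d\geq 2s(s+1)$ by Riemann-Roch, since $2d-g>2g-2$ already for modest $s$, and in general by cohomology and base change after suitable twist), one finds
\[
\mathrm{rk}\,\mathcal E=\binom{2s+2}{2}=(s+1)(2s+1)=2d-g+1=\mathrm{rk}\,\mathcal F.
\]
Thus multiplication $\mu\colon\mathcal E\to\mathcal F$ is a map of vector bundles of the same rank, and the locus where it drops rank is the virtual divisor representing the condition $K_{0,2}(C,L)\neq 0$, with virtual class $\sigma_*\bigl(c_1(\mathcal F)-c_1(\mathcal E)\bigr)$.

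The next step is to show that $\mathcal{Z}_s$ is an honest divisor rather than all of $\M_g$. For this I would exhibit a single pair $(C,L)$ with $[C]\in\M_g$ and $L\in W^r_d(C)$ for which $\mu_L$ is injective (equivalently, $C\hookrightarrow\mathbb P^r$ via $|L|$ lies on no quadric). Curves on general K3 surfaces are a natural source: if $S$ is a K3 with $\mathrm{Pic}(S)=\Z[C]$ and $L$ is cut out by a suitable adjoint line bundle, Green's $K_{p,1}$ theorem together with Voisin's work on syzygies of K3 sections can be invoked to ensure the desired non-vanishing of the differential, hence also generic injectivity of $\mu_L$ for the chosen numerics. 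Alternatively (and perhaps more robustly), a degeneration argument to a general $g$-cuspidal rational curve, using the Koszul cohomology approach of Aprodu--Voisin, will show that the virtual divisor is supported on a proper subvariety of $\M_g$. This is the main obstacle, and I expect it to require the full strength of the technique of \cite{AproduNagel}: the cohomological computation must be done on a reducible or cuspidal test curve where both $W^r_d$ and the multiplication map can be described explicitly.

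Once $\mathcal{Z}_s$ is known to be a genuine divisor, its class extends to $\overline{\M}_g$ after compactifying $\mathfrak{G}^r_d$ using limit linear series (\`a la Eisenbud--Harris), away from loci of codimension $\geq 2$. The slope computation then reduces to evaluating $\sigma_*\bigl(c_1(\mathcal F)-c_1(\mathcal E)\bigr)$ in $\mathrm{Pic}(\overline{\M}_g)\otimes\Q$. I would carry this out by Grothendieck--Riemann--Roch applied to $\pi\colon\overline{\mathcal C}\to\overline{\mathfrak G}{}^r_d$: $c_1(\pi_*\mathcal L)$ and $c_1(\pi_*\mathcal L^{\otimes 2})$ are expressed in terms of $\pi_*(c_1(\mathcal L)^2)$, $\pi_*\bigl(c_1(\mathcal L)\cdot c_1(\omega_\pi)\bigr)$, $\pi_*c_1(\omega_\pi)^2$, and the boundary, using the standard identities
\[
c_1(\mathcal F)-c_1(\mathcal E)=c_1(\pi_*\mathcal L^{\otimes 2})-(2s+2)\,c_1(\pi_*\mathcal L).
\]
Then $\sigma_*$ is computed by intersecting with explicit test curves in $\overline{\M}_g$: (i) a pencil of curves on a general K3 surface of genus $g$ (which meets the Brill-Noether divisors trivially and fixes the $\lambda$-to-$\delta_{\irr}$ ratio), and (ii) a family obtained by attaching a moving elliptic tail (to isolate the coefficient of $\delta_1$). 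Writing the answer as $a\lambda-b_0\deltairr-\sum_i b_i\delta_i$ and simplifying with the formula for the Castelnuovo number and the combinatorial identities for $\rho=0$ Brill-Noether loci (as in \cite{EisenbudHarrisKodaira}) yields precisely the ratio $\frac{a}{b_0}$ asserted in the statement. The routine---but lengthy---polynomial identity for the numerator and denominator in $s$ follows from expanding the Castelnuovo number and collecting terms.
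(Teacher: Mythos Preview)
Your two-step architecture---realize $\mathcal Z_s$ as the degeneracy locus of $\mu\colon\mathrm{Sym}^2\pi_*\mathcal L\to\pi_*\mathcal L^{\otimes 2}$ over $\mathfrak G^r_d$, push forward the virtual class, and separately verify that the locus is a genuine divisor---is exactly what the paper does, citing \cite{FarkasKoszul} and \cite{Khosla} for the class computation and \cite{FarkasKoszul} for the transversality. The rank check $(s+1)(2s+1)=2d-g+1$ is correct.

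The genuine gap is in your proposed witness for properness. Using a section $C$ of a $K3$ surface with $\mathrm{Pic}(S)=\mathbb Z[C]$ to exhibit injectivity of $\mu_L$ is exactly backwards: the entire motivation for these Koszul divisors, spelled out in the paper just before the statement, is that $K3$ sections carry linear series that are \emph{not} projectively normal (Mukai, Voisin), so that $\mathcal K_g\subset\mathcal Z_s$. A $K3$ section is precisely the wrong curve to look for outside $\mathcal Z_s$. Your appeal to ``Green's $K_{p,1}$ theorem'' and ``non-vanishing of the differential'' does not parse here: Green's $K_{p,1}$ result concerns $K_{p,1}$, not $K_{0,2}$, and Voisin's theorem is about the canonical bundle, not the $L\in W^{2s}_{2s(s+1)}(C)$ at hand. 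Your alternative of degenerating is the correct route; the paper adds a point you omit: a priori one needs $\mu_L$ injective for \emph{every} $L\in W^r_d(C)$, and a monodromy argument (available because $\rho=0$ makes $W^r_d(C)$ finite with transitive monodromy) reduces this to producing a single pair $(C,L)$, which is then done by a limit-linear-series degeneration in \cite{FarkasKoszul} rather than via cuspidal rational curves.

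A smaller issue: in your class computation you propose a $K3$ Lefschetz pencil as a test curve to ``fix the $\lambda$-to-$\deltairr$ ratio.'' Since that pencil sits inside $\overline{\mathcal Z}_s$, its intersection with the divisor is not a priori zero or otherwise directly computable, so it does not give the relation you want. The actual computations in \cite{FarkasKoszul} and \cite{Khosla} push the tautological classes on $\overline{\mathfrak G}{}^r_d$ forward directly rather than relying on such test curves.
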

This implies that $s(\overline{\mathcal{Z}}_{s})<6+\frac{12}{g+1}$. In particular $s(\Mbar_g)<6+\frac{12}{g+1}$, for all genera of the form $g=s(2s+1)$. In the case $s=2$, one has the set-theoretic equality of divisors $\mathcal{Z}_2=\mathcal{K}_{10}$ and $s(\overline{\mathcal{Z}}_{2})=s(\Mbar_{10})=7$. This was the first instance of a geometrically defined divisor on $\Mbar_g$ having smaller slope than that of the Brill-Noether divisors, see \cite{FarkasPopa}.  

The proof of Theorem \ref{rho0} breaks into two parts, very different in flavor. First one computes the virtual class of $\overline{\mathcal{Z}}_s$, which would then equal the actual class $[\overline{\mathcal{Z}}_s]$, if one knew that
$\mathcal{Z}_s$ was a divisor on $\M_g$. This first step has been carried out independently and with different techniques by the second author in \cite{FarkasKoszul} and by Khosla in \cite{Khosla}. The second step in the proof involves showing  that $\mathcal{Z}_s$ is a divisor. It suffices to exhibit a single curve $[C]\in \M_g$ such that $K_{0, 2}(C, L)=0$, for every linear series $L\in W^r_d(C)$.
By a standard monodromy argument, in the case $\rho(g,r,d)=0$, this is equivalent to the seemingly weaker requirement that there exist both a curve $[C]\in \M_g$ and a \emph{single}
linear series $L\in W^r_d(C)$ such that $K_{0, 2}(C, K_C)=0$. This is proved by degeneration in \cite{FarkasKoszul}.

The case of Koszul divisors defined in terms of linear systems with positive Brill-Noether number is considerably more involved, but the rewards are also higher. For instance, this approach is used in \cite{Farkassdg} to prove that $\Mbar_{22}$ is of general type. 

We fix integers $s\geq 2$ and $a\geq 0$, then set
$$g=2s^2+s+a, \quad d=2s^2+2s+a,$$
 therefore $\rho(g, r, d)=a$. Consider the stack $\sigma:\mathfrak G^r_d\rightarrow \M_g$ classifying linear series  $\mathfrak g^r_d$ on curves of genus $g$. Inside the stack $\mathfrak{G}^r_d$ we consider the locus of those pairs $[C, L]$ with $L\in W^r_d(C)$, for which the multiplication map
$$\mu_0(L): \mathrm{Sym}^2 H^0(C, L)\rightarrow H^0(C, L^{\otimes 2})$$ is not injective. The expected codimension in $\mathfrak{G}^r_d$ of this cycle is equal to $a+1$, hence the push-forward under $\sigma$ of this cycle is a virtual divisor in $\M_g$. The case $a=1$ of this construction will be treated in the forthcoming paper \cite{FarkasKoszul2}
from which we quote:
\begin{theorem}\label{nr1}
We fix $s\geq 2$ and set $g=2s^2+s+1$. The locus
$$\mathfrak D_s:=\{[C]\in \M_g: K_{0, 2}(C, L)\neq 0 \ \mbox{for a certain}\ L\in W^{2s}_{2s(s+1)+1}(C)\}$$
is an effective divisor on $\M_g$. The slope of its closure inside $\Mbar_g$ equals
$$s(\overline{\mathfrak D}_s)=\frac{3(48s^8-56s^7+92s^6-90s^5+86s^4+324s^3+317s^2+182s+48)}{24s^8-28s^7+22s^6-5s^5+43s^4+112s^3+100s^2+50s+12}.
$$
\end{theorem}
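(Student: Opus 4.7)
The strategy follows the template established for Theorem \ref{rho0}, but must accommodate the fact that the universal linear series $\sigma: \mathfrak G^{2s}_{2s(s+1)+1} \to \M_g$ now has positive relative dimension $\rho=1$, so the bulk of the intersection theory has to be done upstairs on $\mathfrak G^r_d$ and then pushed down. Let $\pi: \mathcal{X} \to \mathfrak G^r_d$ denote the universal curve, $\mathcal{L}$ the universal line bundle of relative degree $d$, and consider the universal multiplication map
$$\mu:\ \mathrm{Sym}^2 \pi_* \mathcal{L}\ \longrightarrow\ \pi_* \mathcal{L}^{\otimes 2}.$$
Because $2d>2g-2$, Riemann-Roch gives $\mathrm{rk}\,\pi_*\mathcal{L}^{\otimes 2}-\mathrm{rk}\,\mathrm{Sym}^2 \pi_*\mathcal{L}=1$, so the locus where $\mu$ fails to be injective has expected codimension $2$ in $\mathfrak G^r_d$. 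Since the generic fiber of $\sigma$ is one-dimensional, the pushforward of this degeneracy locus is a virtual divisor in $\M_g$; extending the picture to the Eisenbud-Harris compactification $\overline{\mathfrak G}^r_d$ over $\Mbar_g$ is what lets us read off the boundary coefficients.

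The class calculation then proceeds in two stages. First, a Porteous formula for a map between bundles of corank $1$ expresses the virtual class of the non-injectivity locus as a quadratic polynomial in the Chern classes of the source and target of $\mu$; the splitting principle reduces this to a combination of $c_1(\pi_*\mathcal{L})^2$, $c_2(\pi_*\mathcal{L})$, and their analogues for $\pi_*\mathcal{L}^{\otimes 2}$. Second, Grothendieck-Riemann-Roch applied to $\pi$ expresses all of those Chern classes in terms of $\lambda$, $\deltairr$, $\delta_i$ pulled back from $\Mbar_g$, together with tautological classes native to $\overline{\mathfrak G}^r_d$ that die under $\sigma_*$. The push-forward by $\sigma$ then yields a divisor class of the shape $a\lambda-b_{\irr}\deltairr-\sum_i b_i\delta_i$; tracking the coefficients through the simplification---aided by the formulas for pushforwards of Chern characters of universal bundles already available in \cite{FarkasKoszul} and \cite{Khosla}---should reproduce the stated slope $s(\overline{\mathfrak D}_s)=a/b_{\irr}$.

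The genuinely difficult step, and the main obstacle, is showing that the virtual class represents an honest divisor, i.e.\ that $\mathfrak D_s\subsetneq \M_g$. This reduces to exhibiting at least one pair $[C,L]$ with $C$ smooth of genus $g$, $L\in W^{2s}_{2s(s+1)+1}(C)$, and $\mu_0(L)$ injective. In the case $\rho=0$ treated in \cite{FarkasKoszul}, a monodromy argument reduced this to checking injectivity for a \emph{single} pair, but with $\rho=1$ the Brill-Noether locus $W^r_d(C)$ is itself a curve and injectivity must hold along a one-parameter family. The natural approach is a degeneration to a curve of compact type---either a flag curve built from a chain of elliptic components, or a general $k$-gonal curve---where the limit linear series are controlled by Eisenbud-Harris theory and the multiplication map decomposes componentwise, paralleling the degeneration that works for $\rho=0$.

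Once divisoriality is established, the concluding bookkeeping is standard: one checks that all $b_i\geq 0$ by intersecting with the usual test pencils in each boundary divisor, confirms that the minimum of $a/b_{\irr}$ and the $a/b_i$ is attained at $b_{\irr}$, and reads off the slope. Finally, one verifies that the resulting rational function of $s$ is strictly less than $6+\tfrac{12}{g+1}$ for every $s\geq 2$, so that $\overline{\mathfrak D}_s$ furnishes a new upper bound on $s(\Mbar_{2s^2+s+1})$.
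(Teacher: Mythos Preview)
The paper does not actually prove this theorem: it is a survey, and Theorem~\ref{nr1} is explicitly \emph{quoted} from the forthcoming paper \cite{FarkasKoszul2} (with the special case $s=3$ attributed to \cite{Farkassdg}). So there is no in-paper proof against which to compare your proposal.

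That said, your outline matches the methodology the paper sketches for the $\rho=0$ case (Theorem~\ref{rho0}) and is consistent with what one would expect from \cite{FarkasKoszul} and \cite{Khosla}: set up the degeneracy locus of the universal multiplication map over $\overline{\mathfrak G}^r_d$, compute its virtual class via Porteous and Grothendieck--Riemann--Roch, and push forward by $\sigma$. Your rank check is correct (the target of $\mu$ exceeds the source by one, so the expected codimension upstairs is $2$, matching the relative dimension $\rho=1$ of $\sigma$). The one place where your sketch is necessarily vague is the divisoriality step: as you note, the $\rho=0$ monodromy reduction no longer suffices, and one must verify $K_{0,2}(C,L)=0$ for \emph{every} $L\in W^r_d(C)$ on some curve $C$. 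This is precisely the content the paper defers to \cite{FarkasKoszul2}, so your proposal is a faithful blueprint but not a proof until that step is carried out in detail.
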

We observe that the inequality $$6+\frac{10}{g}<s(\overline{\mathfrak{D}}_s)<6+\frac{12}{g+1},$$
holds for each $s\geq 3$. The case $s=3$ of Theorem \ref{nr1} is presented in \cite{Farkassdg} and it proves that 
 $\Mbar_{22}$ is a variety of general type. Another very interesting case is $s=2$, that is, $g=11$. This case is studied by Ortega and the second author in
\cite{FarkasOrtega} in connection with \emph{Mercat's Conjecture} in higher rank Brill-Noether theory. In view of the relevance of this case to attempts of establishing a 
credible rank two Brill-Noether theory, we briefly explain the situation.

We denote as usual by $\F_g$ the moduli space parametrizing pairs $[S, H]$, where $S$ is a smooth $K3$ surface and $H\in \mathrm{Pic}(S)$ is a primitive nef line bundle with $H^2=2g-2$. 
Over $\F_g$ one considers the projective bundle
$\mathcal{P}_g$
classifying pairs $[S, C]$, where  $S$ is a smooth  $K3$  surface and $C\subset S$ is a smooth curve of genus $g$. Clearly $\mbox{dim}(\mathcal{P}_g)=\mbox{dim}(\F_g)+g=19+g$.
Observe now that for $g=11$ both spaces $\M_{11}$ and $\mathcal{P}_{11}$ have the same dimension, so one expects a general curve of genus $11$ to lie on finitely many $K3$ surfaces. This expectation can be made much more precise. 

For a general curve $[C]\in \M_{11}$, the rank $2$ Brill-Noether locus
$$\mathcal{SU}_C(2, K_C, 7):=\{E\in \mathcal{U}_C(2, 20):\mbox{det}(E)=K_C, \ h^0(C, E)\geq 7\}$$
is a smooth $K3$ surface. Mukai shows in \cite{Mukaigenus11} that $C$ lies on a unique $K3$ surface which can be realized as the Fourier-Mukai partner of $SU_C(2, K_C, 7)$. Moreover, this procedure induces a birational isomorphism
$$\phi_{11}:\M_{11}\dashrightarrow \mathcal{P}_{11},\  \ \mbox{    } \phi_{11}([C]):=\bigl[\widehat{\mathcal{SU}_C(2, K_C, 7)}, C\bigr]$$
and the two Brill-Noether divisors $\Mbar_{11, 6}^1$ and $\Mbar_{11,9}^2$ (and likewise the Koszul divisor) are pull-backs by $\phi$ of  Noether-Lefschetz divisors on $\F_{11}$. 

Next we define the second Clifford index of a curve which measures the complexity of a curve in its moduli space from the point of view of rank two vector bundles.
\begin{definition}
If $E\in \mathcal{U}_C(2, d)$ denotes a semistable vector bundle of rank $2$ and degree $d$ on a curve $C$ of genus $g$, one defines its Clifford index as
$\gamma(E):=\mu(E)- h^0(C, E)+2$ and the \emph{second Clifford index} of $C$ by the quantity
$$\mathrm{Cliff}_2(C):=\mbox{min}\bigl\{\gamma(E): E\in \mathcal{U}_C(2, d), \ \ d\leq 2(g-1), \ \ h^0(C, E)\geq 4\bigr\}.$$
\end{definition}

Mercat's Conjecture \cite{Mercat} predicts that the equality 
\begin{equation}\label{mercat}
\mbox{Cliff}_2(C)=\mbox{Cliff}(C)
\end{equation}
holds for \emph{every} smooth curve of genus $g$. By specializing to direct sums of line bundles, the inequality $\mbox{Cliff}_2(C)\leq \mbox{Cliff}(C)$ is obvious. Lange and Newstead have proved the conjecture for small genus \cite{LangeNewstead}. However the situation changes for $g=11$ and the following result
is proved in \cite{FarkasOrtega}:

\begin{theorem}\label{m11}
The Koszul divisor $\mathfrak{D}_{2}$ on $\M_{11}$ has the following realizations:
\begin{enumerate}
\item (By definition) $\{[C]\in \M_{11}: \exists L\in W^4_{13}(C)\ 
\mbox{such that} \ K_{0, 2}(C, L)\neq 0 \}$.
\item $\{[C]\in \M_{11}: \mathrm{Cliff}_2(C)<\mathrm{Cliff}(C)\}.$
\item $\phi_{11}^*(\mathcal{NL})$, where $\mathcal{NL}$ is the Noether-Lefschetz divisor of (elliptic) $K3$ surfaces $S$ with lattice $\mathrm{Pic}(S)\supset \mathbb Z\cdot H\oplus \mathbb Z\cdot C$, where $C^2=20$, $H^2=6$ and $C\cdot H=13$.
\end{enumerate}     
The closure $\overline{\mathfrak{D}}_2$ of $\mathfrak{D}_2$ in $\Mbar_{11}$ is a divisor of minimal slope
$$s(\overline{\mathfrak{D}}_2)=s(\Mbar_{11})=7.$$
\end{theorem}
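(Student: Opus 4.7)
The plan is to establish the three realizations (1)--(3) as the same divisor by proving the containments
$\phi_{11}^*(\mathcal{NL}) \subseteq \mathfrak D_2 \subseteq \{[C] : \mathrm{Cliff}_2(C) < \mathrm{Cliff}(C)\}$
and then closing the loop, after which the slope statement in (4) drops out of Theorem~\ref{nr1} for $s=2$ combined with the Lefschetz pencil bound from Section~1.

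For the slope computation (4), one substitutes $s=2$ into the formula of Theorem~\ref{nr1}: numerator and denominator both evaluate so that $s(\overline{\mathfrak D}_2)=7$ (one checks $41328 = 7\cdot 5904$), which immediately gives the upper bound $s(\Mbar_{11}) \leq 7$. For the matching lower bound, use that genus~$11$ lies in the Mukai range: a general Lefschetz pencil $B$ in the primitive linear system on a generic K3 surface of genus $11$ sweeps out $\Mbar_{11}$ and satisfies $B \cdot \lambda = 12$, $B \cdot \deltairr = 6\cdot 11 + 18 = 84$, $B \cdot \delta_i = 0$ for $i \geq 1$. Therefore $s(\Mbar_{11}) \geq s_B = 7$, and the two bounds collapse to the equality $s(\overline{\mathfrak D}_2) = s(\Mbar_{11}) = 7$.

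For the K3 realization (3), fix $[S,C]$ with $\mathrm{Pic}(S) \supset \Z H \oplus \Z C$ of the prescribed intersection form. Riemann--Roch on the K3 gives $\chi(S,H) = 2 + H^2/2 = 5$, and because $(H-C)\cdot H = 6-13 < 0$ the class $H-C$ is not effective, so the exact sequence $0 \to \OO_S(H-C) \to \OO_S(H) \to H|_C \to 0$ shows $h^0(C,H|_C) \geq 5$. Thus $H|_C \in W^4_{13}(C)$. The projective normality of K3 sections of low degree forces a nontrivial syzygy: the pencil residual to $H$ inside the linear system of $C$, combined with Green--Lazarsfeld's non-vanishing principle for curves on K3 surfaces, produces a quadric in the ideal of $\phi_{H|_C}(C) \subset \PP^4$, that is $K_{0,2}(C,H|_C)\neq 0$. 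This proves the inclusion $\phi_{11}^*(\mathcal{NL}) \subseteq \overline{\mathfrak D}_2$; since both sides are irreducible effective divisors on the birational model $\mathcal P_{11}$ (Mukai) and the inclusion is already an equality generically, one concludes $\phi_{11}^*(\mathcal{NL}) = \overline{\mathfrak D}_2$.

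The main obstacle is the Mercat equivalence (2), linking the syzygy condition to the rank-two Clifford invariant. Given $L\in W^4_{13}(C)$ with $K_{0,2}(C,L) \neq 0$, pick a nonzero quadric $Q \in \ker\bigl(\mathrm{Sym}^2 H^0(L) \to H^0(L^{\otimes 2})\bigr)$. The plan is to produce a semistable rank~$2$ bundle $E$ on $C$ with $h^0(E)\geq 4$ from the geometry of $Q$: if $Q$ has rank $\leq 4$, its rulings in $\PP^4$ give a factorization of $\phi_L$ through a scroll, from which one reads off a rank~$2$ Lazarsfeld--Mukai type bundle; if $Q$ is nonsingular, one uses the kernel of the evaluation map on $H^0(L)$ to extract $E$ as a suitable quotient or subbundle, exploiting that the kernel bundle $M_L$ of rank $4$ admits a rank~$2$ subbundle forced by the quadric relation. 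In either case a short computation shows $\gamma(E) < 5 = \mathrm{Cliff}(C)$, so $C \in \{\mathrm{Cliff}_2 < \mathrm{Cliff}\}$. Conversely, given a rank~$2$ bundle $E$ violating Mercat's equality, a maximal line subbundle $M \subset E$ together with the quotient $E/M$ produces, via their tensor product with $K_C$ and the isomorphism $\det E \sim L^{\otimes 2}$ for an appropriate $L \in W^4_{13}(C)$, the required nontrivial element in $K_{0,2}(C,L)$. The hard part here is controlling stability and avoiding degenerate specializations during the back-and-forth; the K3 description in (3) provides an abundant supply of test curves on which these vector bundles can be verified explicitly, making the equivalence tractable precisely because of Mukai's isomorphism $\phi_{11}$.
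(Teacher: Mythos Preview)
The paper is a survey and does not itself prove this theorem; it is quoted from \cite{FarkasOrtega}. So there is no in-paper proof to compare against, and I assess your sketch on its own merits.

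Your slope argument is correct and complete: substituting $s=2$ in Theorem~\ref{nr1} gives $s(\overline{\mathfrak D}_2)=7$, and the Lefschetz pencil on a general $K3$ of genus $11$ (which sweeps out $\Mbar_{11}$ because $g=11$ is in the Mukai range) gives the matching lower bound $s(\Mbar_{11})\ge 7$.

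Your argument for $(3)\subseteq(1)$ is essentially right, but the appeal to ``Green--Lazarsfeld non-vanishing'' obscures a more direct reason. With $H^2=6$ one has $h^0(S,H)=5$ and $h^0(S,2H)=14$, so $\mathrm{Sym}^2H^0(S,H)\to H^0(S,2H)$ already has nontrivial kernel by a dimension count. Since $H^0(S,H)\hookrightarrow H^0(C,L)$ (as you noted, $h^0(S,H-C)=0$), any quadric through $\varphi_H(S)\subset\PP^4$ restricts to a nonzero element of $\ker\bigl(\mathrm{Sym}^2H^0(C,L)\to H^0(C,L^{\otimes 2})\bigr)$. Equality of the two irreducible divisors then follows as you say.

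The genuine gap is in part $(2)$. In the converse direction you assert ``the isomorphism $\det E\sim L^{\otimes 2}$ for an appropriate $L\in W^4_{13}(C)$'', but this cannot hold as stated: it forces $\deg E=26$, hence $\mu(E)=13$, and then $\gamma(E)<5$ would require $h^0(E)\ge 11$; more fundamentally, there is no reason the determinant of a Mercat-violating bundle should be a \emph{square} in $\mathrm{Pic}(C)$. In the forward direction your treatment of the rank~$5$ (smooth) quadric is unsupported: a smooth quadric in $\PP^4$ carries no ruling by planes and no obvious pencil, so the claim that ``$M_L$ admits a rank~$2$ subbundle forced by the quadric relation'' needs a real argument. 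The actual link established in \cite{FarkasOrtega} passes through Lazarsfeld--Mukai bundles on the ambient $K3$ and their restrictions/elementary modifications; your sketch does not yet capture that mechanism, and the maximal-subbundle construction you outline does not produce the required $L$.
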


In \cite{FarkasOrtega}, the second description in Theorem \ref{m11} is shown to imply that $\mathfrak{D}_2$ is the locus over which Mercat's Conjecture fails. Since $\mathfrak{D}_2\neq \emptyset$, Mercat's Conjecture in its original form is false on $\M_{11}$. On the other hand, Theorem \ref{m11} proves equality (\ref{mercat}) for general curves $[C]\in \M_{11}$. Proving Mercat's Conjecture
for a general $[C]\in \M_g$, or understanding the loci in moduli where the equality fails, is a stimulating open question.

\begin{remark}
Note that in contrast with lower genus, for $g=11$ we have $s(\Mbar_{11})=s'(\Mbar_{11})=7$. Furthermore, the dimension of the linear system 
of effective divisors of slope $7$ is equal to $19$ (see \cite{FarkasPopa}). The divisors $\Mbar_{11, 6}^1$, $\Mbar_{11, 9}^2$ and $\overline{\mathfrak{D}}_2$ are just three 
elements of this $19$-dimensional linear system.
\end{remark}

\section{Lower bounds on $s(\Mbar_g)$}
In this section we summarize several approaches towards finding lower bounds for $s(\Mbar_g)$ when $g$ is large. The idea is simple. One has to produce one-dimensional collections of families $C \to B$ of genus $g$ curves---in other words, curves $B \in \Mbar_g$---such that the union of all the curves $B$ is Zariski dense. For example, a single moving curve $B$ provides such a collection. No effective divisor $D$ contain all such $B$ and, when $B$ does not lie in $D$, the inequality  $B\cdot D \geq 0$ implies that the \emph{slope} $ s_B := \frac{B\cdot \delta}{B\cdot \lambda}$ is a lower bound for $s(D)$, and hence that the infimum of these slopes is a lower bound for $s(\Mbar_g)$. The difficulty generally arises in computing this bound. We discuss several constructions via covers of the projective line, via imposing conditions on space curves, via Teichm\"uller theory and via Gromov-Witten theory, respectively. Observe that when $\Mbar_g$ is a variety of general type, it carries no rational or elliptic moving curves.

\subsection{Covers of $\PP^1$ with a moving branch point}
\label{HarrisMorrison}

Harris and the third author \cite{HarrisMorrisonSlopes} constructed moving curves in $\Mbar_g$ using certain Hurwitz curves of branched covers of
$\PP^1$. Consider a connected $k$-sheeted cover $f: C\to \PP^1$ with $b = 2g - 2 + 2k$ simply branch points $p_1, \ldots, p_b$. If $\gamma_i$ is a closed loop around $p_i$ separating it from the other $p_j$'s, then, since $p_i$ is a simple branch point, the monodromy around $\gamma_i$ is a simple transposition $\tau_i$ in the symmetric group $S_k$ on the points of a general fiber. The product of these transpositions (suitable ordered) must be the identity since a loop around all the $p_i$ is nullhomotopic, the subgroup they generate must be transitive (since we assume that $C$ is connected), and then covers with given branch points are specified by giving the $\tau_i$ up to simultaneous $S_k$-conjugation.

Varying $p_b$ while leaving the others fixed, we obtain a one-dimensional Hurwitz space $Z$. When $p_b$ meets another branch point, say $p_{b-1}$, the base $\PP^1$ degenerates to a union of two $\PP^1$-components glued at a node $s$, with
$p_1, \ldots, p_{b-2}$ on one component and $p_{b-1}$ and $p_b$ on the other. The covering curve $C$ degenerates to a nodal \emph{admissible cover} accordingly, with nodes, say, $t_1, \ldots, t_n$. Such covers were introduced by Beauville for $k=2$ \cite{BeauvillePrym} and by Harris and Mumford \cite{HarrisMumfordKodaira} for general $k$; for a non-technical introduction to admissible covers, see \cite{Moduli}*{Chapter 3.G}.

Locally around $t_i$, the covering map is given by $(x_i, y_i) \to (u = x_i^{k_i}, v = y_i^{k_i})$, where $x_i, y_i$ and $u, v$ parameterize the two branches meeting at $t_i$ and $s$, respectively. We still call $k_i - 1$ the order of ramification of $t_i$. The data $(n; k_1, \ldots, k_n)$ can be determined by the monodromy of the cover around $p_{b-1}$ and $p_b$. 

When $p_b$ approaches $p_{b-1}$, consider the product $\tau = \tau_{b-1}\tau_b$ associated to the vanishing cycle $\beta$ that shrinks to the node $s$ as shown in Figure~\ref{degenerate}.

\begin{figure}[ht]
    \centering
	\begin{overpic}[scale=0.5]{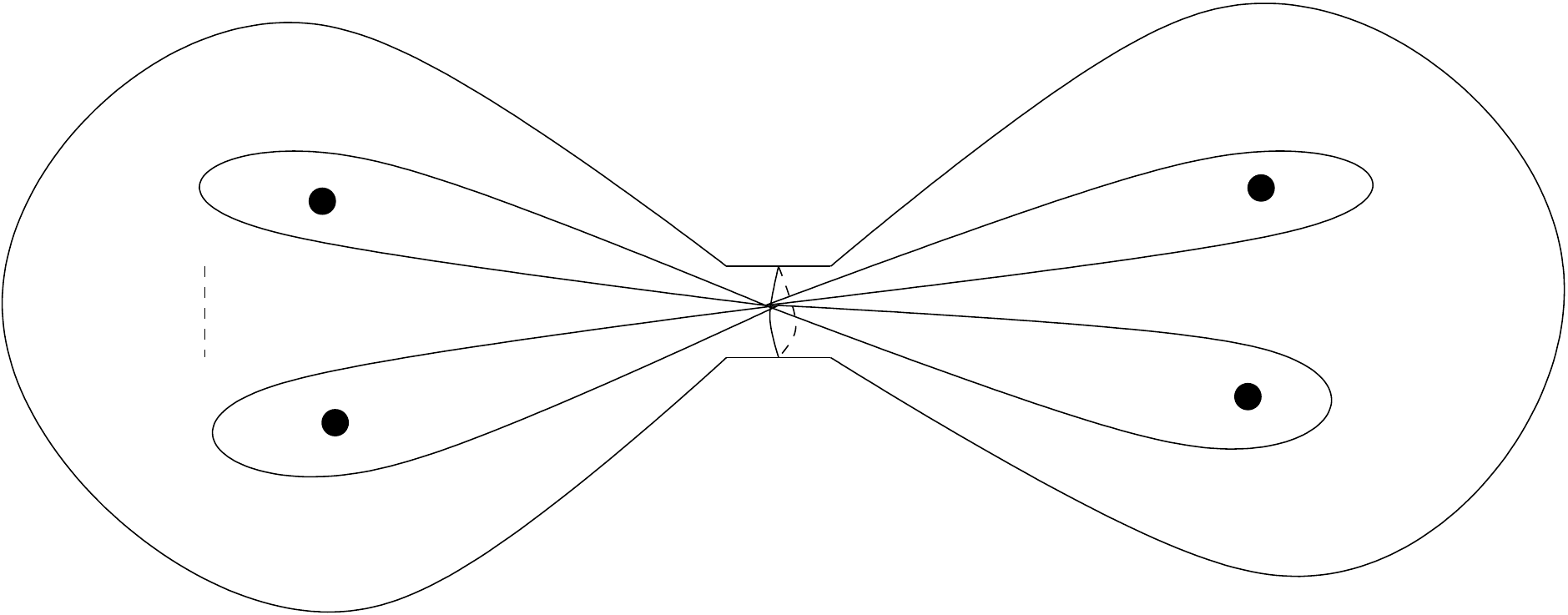}
		\put(240,260){$p_{1}$}
		\put(240,130){$p_{b-2}$}
		\put(720,260){$p_{b-1}$}
		\put(730,130){$p_{b}$}
		\put(500,250){$\beta$}
		\put(70,280){$\gamma_{1}$}
		\put(60,120){$\gamma_{b-2}$}
		\put(880,280){$\gamma_{b-1}$}
		\put(880,120){$\gamma_{b}$}
	\end{overpic}
    \caption{\label{degenerate} The target $\PP^1$ degenerates when two branch points meet}
    \end{figure}

Without loss of generality, suppose $\tau_{b} = (12)$, i.e. it switches the first two sheets of the cover, and suppose $\tau_{b-1} = (ij)$. There are three cases:

(1) If $\tau_{b-1} = \tau_b$, then $\tau = \mbox{id}$. Consequently over the node $s$, we see $k$ nodes $t_1, \ldots, t_k$ arising in the degenerate cover, each of which is unramified;

(2) If $|\{i,j\} \cap \{1,2\}| = 1$, say $(ij) = (13)$, then $\tau = (123)$, i.e. it switches the first three sheets while fixing the others. We see $k-2$ nodes $t_1, \ldots, t_{k-2}$ arising in the degenerate cover, such that $t_1$ has order of ramification $3-1 = 2$ and $t_2, \ldots, t_{k-2}$ are unramified;

(3) If $\{i,j\} \cap \{1,2\} = \emptyset$, say $(ij) = (34)$, then $\tau = (12)(34)$. We see $k-3$ nodes $t_1, \ldots, t_{k-2}$ arising in the degenerate cover, such that $t_1$ and $t_2$ are both simply ramified and $t_3, \ldots, t_{k-2}$ are unramified.

Let $f: Z\to \Mbar_g$ be the moduli map sending a branched cover to (the stable limit of) its domain curve. The intersection $f_{*}Z\cdot \delta$ can be read off from the description of admissible covers. For instance, if an admissible cover belongs to case (1), it possesses $k$ unramified nodes. Over the $\PP^1$-component containing $p_{b-1}$ and $p_b$, there are $k-2$ rational tails that map isomorphically as well as a rational bridge that admits a double cover.
Blowing down a rational tail gives rise to a smooth point of the stable limit, hence only the rational bridge contributes to the intersection with $\delta$ and the contribution is $2$, since it is a $(-2)$-curve. The intersection $f_{*}Z\cdot \lambda$ can be deduced from the relation $12\lambda = \delta + \omega^2$, where $\omega$ is the first Chern class of the relative dualizing sheaf of the universal covering curve over $Z$, cf. \cite{Moduli}*{Chapter 6.C} for a sample calculation.

Using these ideas, Harris and the third author obtained a slope formula for $s_Z$ \cite{HarrisMorrisonSlopes}*{Corollary 3.15} in terms of counts of branched covers in each of the three cases, for which they provide only recursive formulae in terms of characters of the symmetric group. More generally, enumerating non-isomorphic branched covers with any fixed ramification is a highly non-trivial combinatorial \emph{Hurwitz counting problem}.

Consider the case when $2k \geq g+2$. Since the Brill-Noether number $\rho(g, 1, k)\geq 0$, a general curve of genus $g$ admits a $k$-sheeted cover of $\PP^1$. Therefore, $Z$ is a moving curve in $\Mbar_g$. Assuming that, for $g$ large, all ordered pairs of simple transpositions are equally likely to occur as $(\tau_{b-1},\tau_{b})$---which seems plausible to first order in $k$---leads to the estimate (cf. \cite{HarrisMorrisonSlopes}*{Remark 3.23}) $s_Z \simeq \frac{576}{5g}$ (plus terms of lower order in $g$) as $g \to \infty$. 

\subsection{Linear sections of the Severi variety}
\label{Fedorchuk}

A branched cover of $\PP^1$ can be regarded as a map to a one-dimensional projective space. One way to generalize is to consider curves in $\PP^2$.
Let $\PP(d) = \PP^{{d+2\choose 2} - 1}$ be the space of plane curves of degree $d$. Consider the \emph{Severi variety} $V_{\irr}^{d, n}\subset \PP(d) $ defined as the closure of the locus parameterizing degree $d$, irreducible, plane nodal curves with $n$ nodes. The dimension of $V_{\irr}^{d, n}$ is $N = 3d + g - 1$, where $g = {d-1\choose 2}  - n$ is the geometric genus of a general curve in $V_{\irr}^{d, n}$. Let $H_p\subset \PP(d)$ be a hyperplane parameterizing curves that pass through a point $p$ in $\PP^2$. Now fix $N - 1$ general points $p_1, \ldots, p_{N-1}$ in the plane. Consider the one-dimensional section of $V_{\irr}^{d, n}$ cut out by these hyperplanes:
$$ C_{\irr}^{d, n} = V_{\irr}^{d, n} \cap H_{p_1} \cap \cdots \cap H_{p_{N-1}}. $$

Normalizing the nodal plane curves as smooth curves of genus $g$, we obtain a moduli map from $C_{\irr}^{d, n}$ to $\Mbar_g$ (after applying stable reduction to the universal curve). The calculation for the slope of $C_{\irr}^{d, n}$ was carried out by Fedorchuk \cite{FedorchukSeveri}. The intersection $C_{\irr}^{d, n}\cdot \delta$ can be expressed by the degree of Severi varieties. For instance, let $N_{\irr}^{d, n}$ be the degree of $V_{\irr}^{d, n}$ in $\PP(d)$. Then $N_{\irr}^{d, n+1}$ corresponds to the number of curves in $C_{\irr}^{d, n}$ that possesses $n+1$ nodes. Each such node contributes $1$ to the intersection with $\deltairr$. Therefore, we have
$$C_{\irr}^{d, n}\cdot \deltairr = (n+1) N_{\irr}^{d, n+1}. $$
Moreover, the degree of Severi varieties was worked out by Caporaso and Harris \cite{CaporasoHarris}*{Theorem 1.1}, though again only a recursive formula is known.

The calculation of $C_{\irr}^{d, n}\cdot \lambda$ is much more involved. Based on the idea of \cite{CaporasoHarris}, fix a line $L$ in $\PP^2$ and
consider the locus of $n$-nodal plane curves whose intersections with $L$ are of the same type, namely, intersecting $L$ transversely at $a_1$ fixed points and
at $b_1$ general points, tangent to $L$ at $a_2$ fixed points and at $b_2$ general points, etc. The closure of this locus is called the \emph{generalized Severi variety}.
A hyperplane section of the Severi variety, as a cycle, is equal to a union of certain generalized Severi varieties \cite{CaporasoHarris}*{Theorem 1.2} and 
$C_{\irr}^{d, n}$ degenerates to a union of linear sections of generalized Severi varieties. The difficulty arising from this approach is that the surface of the total degeneration admits only a rational map to $\Mbar_g$, which does not a priori extend to a morphism, as would be the case over a one-dimensional base. Treating a plane curve as the image of a stable map, Fedorchuk was able to resolve the indeterminacy of this moduli map, take the discrepancy into account, and eventually express $C_{\irr}^{d, n}\cdot \lambda$ as a recursion \cite{FedorchukSeveri}*{Theorem 1.11}.

When the Brill-Noether number $\rho(g, 2, d) = 3d - 2g - 6$ is non-negative, a general curve of genus $g$ can be realized as a plane nodal curve of degree $d$. In this case $C_{\irr}^{d, n}$ yields a moving curve in $\Mbar_g$. Fedorchuk evaluated the slope of $C_{\irr}^{d, n}$ explicitly for $d\leq 16$ and $g\leq 21$, cf. \cite{FedorchukSeveri}*{Table 1}, which consequently serves as a lower bound for $s(\Mbar_g)$. In this range, the bounds decrease from $10$ to $4.93$ and Fedorchuk speculates that ``even though we have nothing to say about the asymptotic behavior of the bounds produced by curves $C_{\irr}^{d, n}$, it would not be surprising if these bounds approached $0$, as $g$ approached $\infty$''.

\subsection{Imposing conditions on canonical curves}
\label{CoskunHarrisStarr}

We have discussed covers of $\PP^1$ and curves in $\PP^2$ as means of producing moving curves in $\Mbar_g$. What about curves in higher dimensional spaces? A natural way of embedding non-hyperelliptic curves is via their \emph{canonical model}. Coskun, Harris and Starr carried out this approach and obtained sharp lower bounds for $s(\Mbar_g)$ up to $g \leq 6$
\cite{CoskunHarrisStarrEffective}*{\S\S~2.3}.

Let us demonstrate their method for the case $g=4$. A canonical genus $4$ curve is a degree $6$ complete intersection
in $\PP^3$, cut out by a quadric surface and a cubic surface. The dimension of the family of such canonical curves is equal to $24$. Passing through a point imposes two conditions to a curve in $\PP^3$ and intersecting a line imposes one condition. Now consider the one-dimensional family $B$ parameterizing genus $4$ canonical curves that pass through $9$ general fixed points and intersect $5$ general lines. Note that  $9$ general points uniquely determine a \emph{smooth} quadric $Q$ containing them, and $5$ general lines intersect $Q$ at $10$ points. Let $C$ be a curve in the family parameterized by $B$. If $C$ is not contained in $Q$, then it has to intersect $Q$ at $\geq 9 + 5 = 14$ points, contradicting that $C\cdot Q = 12$. Therefore, every curve in $B$ is contained in $Q$. Recall that the Gieseker-Petri divisor $\overline{\mathcal{GP}}_{4, 3}^1$ on $\Mbar_4$ parameterizes genus $4$ curves whose canonical images lie in a quadric cone, and its slope is $\frac{17}{2}$. Therefore, the image of $B$ in $\Mbar_4$ and the divisor $\overline{\mathcal{GP}}_{4, 3}^1$ are disjoint. Moreover, since the points and lines are general, $B$ is a moving curve in $\Mbar_4$. As a consequence, $s_B = \frac{17}{2}$ is a lower bound for $s(\Mbar_4)$. This bound is sharp and is attained by the Gieseker-Petri divisor of genus $4$ curves lying on singular quadric surface in $\PP^3$ .

In general, the dimension of the Hilbert scheme of genus $g$ canonical curves in $\PP^{g-1}$ is $g^2 + 3g - 4$. Since
$g^2 + 3g - 4 = (g+5)(g-2) + 6$, we get a moving curve $B$, for $g \ge 9$ from the canonical curves that contain $g+5$ general points and intersect a general linear subspace
$\PP^{g-7}$. Several difficulties arise in trying to imitate the calculation of the slope of $B$. We have no detailed description of the geometry of canonical curves for large $g$, and especially of their enumerative geometry. However, this approach is sufficiently intriguing for us to propose the following:

\begin{problem}\label{canonicalslopes} Determine the lower bounds for $s(\Mbar_g)$ resulting  from computing the characteristic numbers of canonical curves of arbitrary genus $g$.
\end{problem}

\subsection{Descendant calculation of Hodge integrals}
\label{Pandharipande}

We have seen a number of explicit constructions of moving curves in $\Mbar_g$. A rather different construction was investigated by Pandharipande \cite{PandharipandeSlope}
via Hodge integrals on $\Mbar_{g,n}$. Let $\psi_i$ be the first Chern class of the cotangent line bundle on $\Mbar_{g,n}$ associated to the $i$th marked point. It is well known that
$\psi_i$ is a nef divisor class. Since a nef divisor class is a limit of ample classes, any curve class of type
$$\psi_1^{a_1}\cdots \psi_n^{a_n}, \quad \sum_{i=1}^n a_i  = 3g- 4 + n $$
is a moving curve class in $\Mbar_{g,n}$. Pushing forward to $\Mbar_g$, we obtain a moving curve in $\Mbar_g$ whose slope is equal to
\begin{equation}
\label{psiintersection}
 \frac{\int_{\Mbar_{g,n}} \psi_1^{a_1}\cdots \psi_n^{a_n} \cdot \delta}{\int_{\Mbar_{g,n}} \psi_1^{a_1}\cdots \psi_n^{a_n} \cdot \lambda}.
\end{equation}

In general, such an integral given by the intersection of tautological classes on $\Mbar_{g,n}$ is called \emph{Hodge integral}.
Pandharipande evaluated \eqref{psiintersection} explicitly for $n = 1$ and $a_1 = 3g-3$. The calculation was built on some fundamental results of Hodge integrals from  \cite{FaberPandharipandeHodge}. For example, normalize a one-nodal, one-marked irreducible curve of arithmetic genus $g$ to a smooth curve of genus $g-1$ with three marked points corresponding to the original marked point and the inverse images of the node. Then we have
$$ \int_{\Mbar_{g,1}}\psi_1^{3g-3} \cdot \deltairr = \frac{1}{2}\int_{\Mbar_{g-1,3}} \psi_1^{3g-3}, $$
where the coefficient $\frac{1}{2}$ is because the normalization map $\Mbar_{g-1, 3} \to \Deltairr\subset \Mbar_{g, 1}$ is generically two to one.
The Hodge integral on the right, as well as that in the denominator of (\ref{psiintersection})  are calculated in \cite{FaberPandharipandeHodge}.

Putting everything together, Pandharipande obtains the lower bound, for all $ g \ge 2$,
$$ s(\Mbar_g) \geq \frac{60}{g+4}. $$
It remains an interesting question to calculate \eqref{psiintersection} for general $a_1, \ldots, a_n$, but based on low genus experiments, it seems that the case $n=1$ and $a_1 = 3g-3$ provides the best lower bound (cf. \cite{PandharipandeSlope}*{\S 5, Conjecture 1}). So, any new bound arising from this approach would be most likely of size $O(\frac{1}{g})$ as $g$ tends to $\infty$.

\subsection{Covers of elliptic curves and Teichm\"uller curves}
\label{Chen}

Recall that \cite{HarrisMorrisonSlopes} constructs moving curves using covers of $\PP^1$. What about covers of curves of higher genera? Suppose the domain curve has genus $g$ and the target has genus $h$. If $h > 1$, by the Riemann-Hurwitz formula, a $d$-sheeted cover satisfies $2g-2 \geq d (2h-2)$, hence there are only finitely many choices for $d$. An easy dimension count shows that the Hurwitz space parameterizing all such covers (under the moduli map) is a union of proper subvarieties of $\Mbar_g$. In principle there could exist an effective divisor containing all of those subvarieties.

This leaves the case $h = 1$, which was studied by the first author in
\cite{ChenCovers}. Let $\mu = (m_1, \ldots, m_k)$ be a partition of $2g-2$. Consider the Hurwitz space $\TT_{d,\mu}$ parameterizing degree $d$, genus $g$, connected covers $\pi$ of elliptic curves with a \emph{unique} branch point $b$ at the origin whose ramification profile is given by $\mu$, i.e.
$$ \pi^{-1}(b) = (m_1 + 1) p_1 + \cdots + (m_k + 1) p_k + q_1 + \cdots + q_l, $$
where $p_i$ has order of ramification $m_i$ and $q_j$ is unramified.
Over a fixed elliptic curve $E$, there exist finitely many non-isomorphic such covers. If we vary the $j$-invariant of $E$, the covering curves also vary to form
a one-dimensional family, namely, the Hurwitz space $\TT_{d,\mu}$ is a curve.

The images of $\TT_{d,\mu}$ for all $d$ form a countable union of curves in $\Mbar_g$. We will see below that $\TT_{d,\mu}$ is a \emph{Teichm\"uller curve} hence \emph{rigid}. Nevertheless if $k \geq g-1$, the union $\cup_d \TT_{d, \mu}$ forms a
Zariski dense subset in $\Mbar_g$ \cite{ChenCovers}*{Proposition 4.1}. In this case the $\liminf$ of the slopes of $\TT_{d, \mu}$ as $d$ approaches $\infty$ still provides a lower bound for $s(\Mbar_g)$ and since effective divisor can contain only finitely many $\TT_{d, \mu}$.

To calculate the slope of $\TT_{d, \mu}$, note that an elliptic curve can degenerate to a one-nodal rational curve, by shrinking
a vanishing cycle $\beta$ to the node. The monodromy action associated to $\beta$ determines the topological type of the admissible cover arising in the degeneration, which in principle indicates how to count the intersection number $\TT_{d, \mu}\cdot \delta$. Moreover, using the relation $12\lambda = \delta + \omega^2$ associated to the universal covering map, one can also calculate $\TT_{d, \mu}\cdot \lambda$. This leads to a formula, again recursive and difficult to unwind for the same reasons as the formulae in \cite{HarrisMorrisonSlopes}, for the slope of  $\TT_{d, \mu}$ in \cite{ChenCovers}*{Theorem 1.15}.

However, $\TT_{d, \mu}$ can be regarded as a special Teichm\"uller curve, and this provides a whole new perspective. Let $\HH(\mu)$ parameterize pairs $(X, \omega)$ such that $X$ is a genus $g$ Riemann surface, $\omega$ is a holomorphic one-form on $X$ and $(\omega)_0 = \sum m_i p_i$ for distinct points $p_i \in X$. Note that integrating $\omega$ along a path connecting two points defines a \emph{flat} structure on $X$. In addition, integrating $\omega$ along a basis of the relative homology group $H_1(X; p_1, \ldots, p_k)$ realizes $X$ as a plane polygon with edges identified appropriately under affine translation. The reader may refer to \cite{ZorichFlat} for an excellent introduction to flat surfaces. Varying the shape of the polygon induces an SL$_2(\RR)$ action on $\HH(\mu)$. Project an orbit of this action to $\M_g$ by sending $(X, \omega)$ to $X$. The image is called a \emph{Teichm\"uller curve} if it is algebraic. Teichm\"uller curves possess a number of fascinating properties. They are geodesics under the Kobayashi metric on $\M_g$. They are rigid \cite{McMullenRigid}, hence give infinitely many examples of rigid curves on various moduli spaces, in particular, on the moduli space of pointed rational curves \cite{ChenRigid}.

Consider a branched cover $\pi: X\to E$ parameterized in $\TT_{d, \mu}$, where $E$ is the square torus with the standard one-form $dz$. The pullback $\omega = \pi^{-1}(dz)$
has divisor of zeros $\sum_i m_i p_i$. Therefore, $(X, \omega)$ yields a point in $\HH(\mu)$. Alternatively, one can glue $d$ copies of the unit square to realize $X$ as a \emph{square-tiled surface}  endowed with flat structure;  Figure~\ref{square-tiled} shows an example.

\begin{figure}[ht]
    \centering
    \begin{overpic}[scale=0.7]{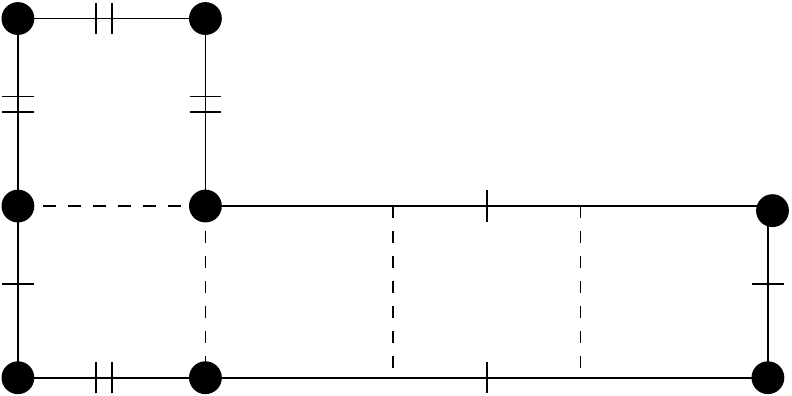}
		\put(130,350){$5$}
		\put(130,120){$1$}
		\put(360,120){$2$}
		\put(600,120){$3$}
		\put(830,120){$4$}
	\end{overpic}
    \caption{\label{square-tiled} A square-tiled surface for $g=2$, $d=5$ and $\mu = (2)$}
   \end{figure}

The SL$_2(\RR)$ action amounts to deforming the square to a rectangle, i.e. changing the $j$-invariant of $E$, hence the Hurwitz curve $\TT_{d, \mu}$ is an invariant orbit under this action. Indeed, $\TT_{d, \mu}$ is called an \emph{arithmetic Teichm\"uller curve}. Note that there exist Teichm\"uller curves that do not arise from a branched cover construction and their classification is far from complete. We refer to \cite{MoellerPCMI} for a survey on Teichm\"uller curves from the viewpoint of algebraic geometry. 

As a square-tiled surface, $X$ decomposes into horizontal cylinders with various heights and widths, which are bounded by horizontal geodesics connecting two zeros of
$\omega$. For instance, the top square of the surface in Figure~\ref{square-tiled} admits a horizontal cylinder of height and width both equal to $1$, while the bottom four squares form another horizontal cylinder of height $1$ and width $4$. Suppose the vanishing cycle of $E$ is represented by the horizontal core curve of the square. Then the core curve of a horizontal cylinder of height $h$ and width $l$ shrinks to $h$ nodes, each of which contributes $\frac{1}{l}$ to the intersection $\TT_{d, \mu} \cdot \delta$ by a local analysis. Denote by $\frac{h}{l}$ the \emph{modulus} of such a cylinder. In general, the ``average'' number of modulus of horizontal cylinders in an SL$_2(\RR)$-orbit closure is defined as the \emph{Siegel-Veech area constant} $c_{\mu}$ associated to the orbit. See \cite{EskinMasurZorichSV} for a comprehensive introduction to Siegel-Veech constants.

The slope of $\TT_{d, \mu}$ has an expression involving its Siegel-Veech constant \cite{ChenRigid}*{Theorem 1.8}, which also holds for any Teichm\"uller curve \cite{ChenMoellerSumlyap}*{\S 3.4}. Based on massive computer experiments, Eskin and Zorich believe that the Siegel-Veech constant $c_{\mu}$ approaches $2$ as $g$ tends to $\infty$ for Teichm\"uller curves in any (non-hyperelliptic) $\HH(\mu)$. Assuming this expectation, the slope formula cited above implies that $s(\TT_{d, \mu})$ grows as $\sim \frac{576}{5g}$ for $g\gg 0$ and $k\geq g-1$. We have seen this bound $\frac{576}{5g}$ in Section~\ref{HarrisMorrison}. But the curves used in \cite{HarrisMorrisonSlopes} are moving while Teichm\"uller curves are rigid! It would be interesting to see whether this is only a coincidence having to do with some property of branched covers or whether the bound $\frac{576}{5g}$ has a more fundamental ``hidden'' meaning.

A final remark on Teichm\"uller curves is that their intersection numbers with divisors on $\Mbar_g$ can provide information about
the SL$_2(\RR)$ dynamics on $\HH(\mu)$. For instance, about a decade ago Kontsevich and Zorich conjectured, based on numerical data, that for many low genus strata $\HH(\mu)$ the sums of Lyapunov exponents are the same for  
\emph{all} Teichm\"uller curves contained in that stratum. This conjecture has been settled by the first author and M\"oller \cite{ChenMoellerSumlyap}. The idea is that the three quantities---the slope, the Siegel-Veech constant and the sum of Lyapunov exponents---determine each other, hence it suffices to show the slopes are non-varying for all Teichm\"uller curves in a low genus stratum. 

Consider, as an example, $\HH(3,1)$ in genus $g=3$. If a curve $C$ possesses a holomorphic one-form $\omega$ such that $(\omega)_0 = 3p_1 + p_2$ for $p_1\neq p_2$, then $C$ is not hyperelliptic, since the hyperelliptic involution would switch the zeros. Consequently 
a Teichm\"uller curve in $\HH(3,1)$ is disjoint from the divisor $\M_{3,2}^1$ of hyperelliptic curves in $\M_3$. Checking that this remains true for the respective closures of these two loci immediately implies that 
the slopes of all Teichm\"uller curves in $\HH(3,1)$ are equal to $s(\Mbar_{3,2}^1) = 9$. For a detailed discussion of the interplay between Teichm\"uller curves and the Brill-Noether divisors, see \cite{ChenMoellerSumlyap}. Using similar ideas, the first author and M\"oller also settled the case of Teichm\"uller curves generated by quadratic differentials in low genus in \cite{ChenMoellerLyapquad}.

\subsection{Moduli spaces of $k$-gonal curves}
\label{analogy}

We end this section by discussing various questions related to slopes on moduli spaces of $k$-gonal curves and we begin with the case of hyperelliptic curves.

Let $\overline{H}_g:=\Mbar_{g, 2}^1$ be the closure of locus of genus $g$ hyperelliptic curves in $\Mbar_g$. Alternatively, it is the admissible cover compactification of the space of genus $g$, simply branched double covers of $\PP^1$. We have an injection $ \iota: \overline{H}_g \hookrightarrow \Mbar_g. $
The rational Picard group of $\overline{H}_g$ is generated by boundary components $\Xi_0,\ldots, \Xi_{[\frac{g-1}{2}]}$ and $\Theta_1, \ldots, \Theta_{[\frac{g}{2}]}$ (cf. \cite{Moduli}*{Chapter 6.C}) and \cite{KeelMcKernanContractible} shows that these classes also generate $\eff{\overline{H}_g}$. A general point of $\Xi_i$ parameterizes a double cover of a one-nodal union $\PP^1 \cup \PP^1$ branched at $2i+2$ points in one component and $2g-2i$ in the other. A general point of $\Theta_i$ parameterizes a double cover of $\PP^1\cup \PP^1$ branched at $2i+1$ points in one component and $2g-2i+1$ in the other. Cornalba and Harris \cite{CornalbaHarrisAmple} proved the following formulae:
$$\iota^{*} (\Deltairr) = 2\sum_{i=0}^{\lfloor \frac{g-1}{2}\rfloor} \Xi_{i}, \quad \iota^{*}(\Delta_i) = \frac{1}{2}\Theta_{2i+1} \ \mbox{for } i\geq 1, $$
$$ \iota^{*}(\lambda) = \sum_{i=0}^{\lfloor\frac{g-1}{2}\rfloor}\frac{(i+1)(g-i)}{4g+2} \Xi_{i} + \sum_{i = 1}^{\lfloor \frac{g}{2}\rfloor} \frac{i(g-i)}{4g+2}\Theta_{i}. $$
Since the smallest boundary coefficient in the expression of $\iota^{*}(\lambda)$ is $\frac{g}{(4g+2)}$ for $\Xi_0 \equiv \iota^{*}\bigl(\frac{\Deltairr}{2}\bigr)$ (modulo higher boundary terms),
if we impose formally the same slope problem to $\overline{H}_g$, the lower bound for slopes of effective divisors on $\overline{H}_g$ is
$$ 2 \cdot \frac{4g+2}{g} = 8 + \frac{4}{g}. $$
Phrasing it differently, if $B\subset \overline{H}_g$ is any one-dimensional family of genus $g$ curves whose general member is smooth and hyperelliptic, then $(8g+4) B\cdot \lambda\geq g B\cdot \delta$, therefore  $s_B\leq 8 + \frac{4}{g}$, cf. \cite{CornalbaHarrisAmple} or \cite{Moduli}*{Corollary 6.24}. Note that this bound converges to $8$ as $g$ approaches $\infty$. The maximum
$8+\frac{4}{g}$ can be achieved by considering a Lefschetz pencil of type $(2, g+1)$ on the quadric $\mathbb P^1\times \mathbb P^1$.
Similar bounds were obtained for trigonal families by Stankova \cite{Stankova}. If $B\subset \Mbar_{g, 3}^1$ is any one-dimensional family of trigonal curve with smooth generic member, then 
$$s_B\leq \frac{36(g+1)}{5g+1}.$$ A better bound $s_B\leq 7+\frac{6}{g}$ is known to hold for trigonal families $B\subset \Mbar_g$ not lying in the \emph{Maroni locus} of $\Mbar_{g, 3}^1$, that is, the subvariety of the trigonal locus corresponding to curves with unbalanced scroll invariants. It is of course highly interesting to find such bounds for the higher $k$-gonal strata
$\Mbar_{g, k}^1$. A yet unproven conjecture of Harris, see \cite{Stankova}*{Conjecture 13.3}, predicts that if $B\subset \Mbar_{g, k}^1$ is any $1$-dimensional family with smooth generic member and not lying in a
codimension one subvariety of $\Mbar_{g, k}^1$, then:
$$s_B\leq \Bigl(6+\frac{2}{k-1}\Bigr)+\frac{2k}{g}.$$

To this circle of ideas belongs the following fundamental question:
\begin{problem} Fix $g$ sufficiently large, so that $\Mbar_g$ is of general type. Find the smallest integer $k_{g, \mathrm{max}}\leq \frac{g+2}{2}$ such that $\Mbar_{g, k}^1$ is a variety of general type for a $k\geq k_{g, \mathrm{max}}$. Similarly, find the largest integer $2\leq k_{g, \mathrm{min}}$ such that $\Mbar_{g, k}^1$ is uniruled for all $k\leq k_{g, \mathrm{min}}$. 
Is it true that 
$$\liminf_{g\rightarrow \infty} \ k_{g, \mathrm{min}}=\liminf_{g\rightarrow \infty} \ k_{g, \mathrm{max}}?$$
Obviously a similar question can be asked for the Severi varieties $\Mbar_{g, d}^2$, or indeed for all Brill-Noether subvarieties of the moduli space.
To highlight our ignorance in this matter, Arbarello and Cornalba \cite{ArbarelloCornalbaFootnotes}, using a beautiful 
construction of Beniamino Segre, showed that $\Mbar_{g, k}^1$ is unirational for $k\leq 5$, but it is not even known that $k_{g, \mathrm{min}}\geq 6$ for arbitrarily large $g$. The best results in this direction are due to Geiss \cite{Geiss} who proves the unirationality of $\Mbar_{g, 6}^1$ for most genera $g\leq 45$.
\end{problem}

\section{The slope of $\A_g$}

In view of the tight analogy with the case of $\Mbar_g$, we want to discuss questions related to the slope of the moduli space $\cA_g$ of principally polarized abelian varieties (ppav) of dimension $g$. Let $\overline{\cA}_g$ be the perfect
cone, or first Voronoi, compactification of $\A_g$ and denote by  $D:=\overline{\cA}_g-\cA_g$ the irreducible boundary divisor. Then $\mbox{Pic}(\overline{\cA}_g)=\mathbb Q\cdot \lambda_1\oplus \mathbb Q\cdot [D]$, where $\lambda_1:=c_1(\mathbb E)$ is the first Chern class of the Hodge bundle. Sections of $\mbox{det}(\mathbb E)$ are weight $1$ Siegel modular forms. Shepherd-Barron \cite{ShepherdBarron} showed that for $g\geq 12$, the perfect cone compactification is the canonical model of $\cA_g$. 

In analogy with the case of $\Mbar_g$, we define the slope of an effective divisor $E\in \mathrm{Eff}(\aa_g)$ as
$$s(E):=\inf\left\{\frac{a}{b}: a, b>0, \ a\lambda_1 -b[D]-[E]= c[D], \ c>0\right\},$$
and then the slope of the moduli space as the quantity
$$s(\aa_g):=\inf_{E\in \mathrm{Eff}(\aa_g)} s(E).$$ Since $K_{\aa_g}=(g+1)\lambda_1-[D]$, it follows that $\aa_g$ is of general type if $s(\aa_g)<g+1$, and $\aa_g$ is uniruled when $s(\aa_g)>g+1$. Mumford \cite{MumfordRavello} was the first to carry out divisor class calculations in $\aa_g$. In particular, he studied the \emph{Andreotti-Mayer divisor} $N_0$ on $\cA_g$ consisting of ppav
$[A, \Theta]$ having a singular theta divisor. Depending on whether the singularity occurs at a torsion point or not, one distinguishes between the components 
$\theta_{\mathrm{null}}$ and $N_0'$ of the Andreotti-Mayer divisor. The following scheme-theoretical equality holds:
$$N_0=\theta_{\mathrm{null}}+2N_0'.$$
The cohomology classes of the components of $\overline{N}_0$ are given are computed in \cite{MumfordRavello}: 
$$[\overline{N}_0']=\Bigl(\frac{(g+1)!}{4}+\frac{g!}{2}-2^{g-3}(2^g+1)\Bigr)\lambda_1-\Bigl(\frac{(g+1)!}{24}-2^{2g-6}\Bigr)[D],$$
respectively 
$$[\overline{\theta}_{\mathrm{null}}]=2^{g-2}(2^g+1)\lambda_1-2^{2g-5}[D].$$
Using these formulas coupled with Tai's results on the singularities of $\aa_g$, Mumford concluded that $\aa_g$ is of general type for $g\geq 7$. Note that at the time, the result had already been established for $g\geq 9$ by Tai \cite{TaiKodaira} and by Freitag \cite{Freitag} for $g=8$. On the other hand, it is well-known that $\cA_g$ is unirational for $g\leq 5$. The remaining case is notoriously difficult. This time, the three authors refrain from betting on possible outcomes and pose the: 
\begin{problem} What is the Kodaira dimension of $\cA_6$? 
\end{problem}

The notion of slope for $\aa_g$ is closely related to that of $\Mbar_g$ via the \emph{Torelli map}
$$ \tau: \Mbar_g \dashrightarrow \aa_g, $$
sending a curve to its (generalized) Jacobian. The restriction of $\tau$ to the union of $\M_g$ and the locus of one-nodal irreducible curves is an embedding.
If a curve $C$ consists of a one-nodal union of two lower genus curves $C_1$ and $C_2$, then $J(C) \cong J(C_1)\times J(C_2)$, which does not depend on the position of the node. Therefore, $\tau$ contracts $\Delta_i$ for $i > 0$ to a higher codimension locus in $\aa_g$. Moreover, we have that
$$ \tau^{*}(\lambda_1) = \lambda, \quad \tau^{*}(D) = \deltairr. $$
Therefore, if $F$ is an effective divisor on $\aa_g$ and that does not contain $\tau(\M_g)$, then $\tau^{*}(F)$ and $F$ have the same slope. If we know that $s(\Mbar_g)\geq \epsilon$ for a positive number $\epsilon$, then any modular form of weight smaller than $\epsilon$ has to vanish on $\tau(\M_g)$. This would provide a novel approach  to understand which modular forms cut out $\M_g$ in $\A_g$, and thus give a solution to the geometric \emph{Schottky problem}. By work of Tai \cite{TaiKodaira} (explained in \cite{GrushevskyAbelian}*{Theorem 5.19}), the lower bound for slopes of effective divisors on $\aa_g$ approaches $0$ as $g$ tends to
$\infty$, that is, $\mbox{lim}_{g\rightarrow \infty} s(\aa_g)=0$. In fact, there exists an effective divisor
on $\aa_g$ whose slope is at most
$$\sigma_g := \frac{(2\pi)^2}{\big(2(g!)\zeta(2g)\big)^{1/g}}.$$
Since $\zeta(2g) \to 1$ and $(g!)^{1/g} \to \frac{g}{e}$ as $g \to \infty$, we find that $g\, \sigma_g \to 68.31\ldots$. Although this is a bit bigger than the unconditional lower bound of $60$ for $g \,s(\Mbar_g)$ of subsection \ref{Pandharipande}, this shows that 
the slope of this divisor is smaller than the heuristic lower bound $\frac{576}{5g}$ for $s(\Mbar_g)$ emerging from \cite{HarrisMorrisonSlopes} and \cite{ChenRigid} for large values of $g$.

As for $\Mbar_g$, the slope of $\aa_g$ is of great interest. The first case is $g=4$ and it is known \cite{smmodfour} that $s(\aa_4)=8$, and the minimal slope is computed by the divisor $\overline{\tau(\M_4)}$ of genus $4$ Jacobians. In the next case $g=5$, the class of the closure of the Andreotti-Mayer divisor is 
$$[\overline{N}_0']=108\lambda_1 -14D,$$ giving  the upper bound $s(\aa_5)\leq \frac{54}{7}$.
Very recently, a complete solution to the slope question on $\aa_5$ has been found in \cite{FGSMV} by Grushevsky, Salvati-Manni, Verra and the second author. We spend the rest of this section explaining the following result:
\begin{theorem}\label{slopea5}
The slope of $\aa_5$ is attained by $\overline{N_0'}$. That is, $s(\aa_5)=\frac{54}{7}$. Furthermore, $\kappa(\aa_5, \overline{N}_0')=0$, that is,
the only effective divisors on $\aa_5$ having minimal slope are the multiples of $\overline{N}_0'$.
\end{theorem}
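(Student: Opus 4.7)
The upper bound $s(\aa_5) \le \tfrac{54}{7}$ is immediate from the class $[\overline{N_0'}] = 108\lambda_1 - 14 D$ recalled above, so my plan is to establish the matching lower bound together with the rigidity $\kappa(\aa_5, \overline{N}_0') = 0$. Since $\mathrm{Pic}(\aa_5) \otimes \Q$ is two-dimensional and spanned by $\lambda_1$ and $D$, both assertions will follow from one construction: a Zariski dense family $\{B_t\}_{t \in T}$ of irreducible curves in $\aa_5$, pairwise numerically proportional, with $B_t \cdot \lambda_1 > 0$ and $B_t \cdot [\overline{N_0'}] = 0$. Any effective divisor of slope smaller than $\tfrac{54}{7}$ would then intersect a general $B_t$ negatively, a contradiction.

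The natural source of such families on $\aa_5$ is the Prym map $P: \mathcal{R}_6 \to \aa_5$, which is dominant and generically finite of degree $27$ by Donagi. My plan is to realize $\{B_t\}$ as images under $P$ of one-parameter families of Prym curves of a geometrically special type---for example, pencils of \'etale double covers of hyperplane sections of a fixed general nodal quartic threefold, or equivalently Prym-embedded pencils on an Enriques surface. For such configurations, classical work of Beauville, Debarre, and Donagi-Smith describes the associated Prym theta divisor and, in particular, gives control over its singularities; this should allow me to verify that $B_t$ avoids the Andreotti-Mayer locus $N_0'$ while still covering $\aa_5$ as the threefold (or surface) varies. The main obstacle will be exactly this dichotomy: verifying that Pryms in the chosen family are simultaneously far from $N_0'$ and generic in $\aa_5$.

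Granted the family $\{B_t\}$, the intersection numbers $B_t \cdot \lambda_1$ and $B_t \cdot D$ are computed by translating from $\mathcal{R}_6$ to $\aa_5$: the pullback $P^{\ast} \lambda_1$ is a known combination of Hodge classes on $\mathcal{R}_6$, and the interaction of $P$ with the boundary description of the perfect cone compactification (in particular, semiabelic degenerations and their torus ranks) controls $P^{\ast} D$. Combined with Mumford's identity $12 \lambda = \omega^2 + \delta$ applied on the base of the pencil, a direct calculation should give the ratio $B_t \cdot D / B_t \cdot \lambda_1 = \tfrac{54}{7}$, matching the upper bound and proving $s(\aa_5) = \tfrac{54}{7}$.

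For the Kodaira dimension statement, any effective divisor $E$ linearly equivalent to $m \overline{N}_0'$ satisfies $B_t \cdot E = m \cdot B_t \cdot \overline{N}_0' = 0$; since $\{B_t\}$ is Zariski dense and a general $B_t$ is neither contained in $E$ nor in $\overline{N}_0'$ by construction, $E$ must be supported set-theoretically on $\overline{N}_0'$. Irreducibility of $\overline{N}_0'$ then combines with $[E] = m[\overline{N}_0']$ to force $E = m \overline{N}_0'$ as divisors, so $h^0(\aa_5, \mathcal{O}(m \overline{N}_0')) = 1$ for every $m \geq 0$. Note that the other Mumford component $\overline{\theta}_{\mathrm{null}}$ has class $264 \lambda_1 - 32 D$, which is not proportional to $108\lambda_1 - 14 D$, so it plays no role in this argument. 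This yields $\kappa(\aa_5, \overline{N}_0') = 0$.
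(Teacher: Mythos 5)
Your proposal inverts the logical structure of the paper's argument, and the inversion creates both a gap in the deduction and an unproven construction that carries essentially all of the difficulty. The paper (following \cite{FGSMV}) proves the rigidity statement $\kappa(\aa_5,\overline{N}_0')=0$ \emph{first} and deduces the slope from it: one pulls $\overline{N}_0'$ back under the Prym map $P:\overline{\mathcal{R}}_6\dashrightarrow\aa_5$, identifies the decomposition $P^*(\overline{N}_0')=2\overline{\mathcal{Q}}+\overline{\mathcal{U}}+20\deltairr^{''}$, and then kills each summand by exhibiting a \emph{sweeping} rational curve $R$ inside that component (constructed from the geometric realizations of $\mathcal{Q}$, e.g.\ via Nikulin surfaces) with $R\cdot\overline{\mathcal{Q}}<0$ and zero intersection with the other summands. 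This negativity argument shows no summand moves in $|P^*(\overline{N}_0')|$, hence $h^0(m\overline{N}_0')=1$; the value $s(\aa_5)=\frac{54}{7}$ then follows formally from extremality in the rank-two Picard group. You instead propose to find a Zariski dense family $\{B_t\}$ in $\aa_5$ with $B_t\cdot\overline{N}_0'=0$. This is a legitimately different (dual) strategy for the slope inequality, but the family is never constructed: the suggested sources (pencils of double covers attached to a nodal quartic threefold, or Prym curves on Enriques surfaces) would have to produce genus~$6$ Prym curves whose Pryms are dense in the $15$-dimensional $\aa_5$ \emph{and} whose closures in $\aa_5$ are entirely disjoint from $\overline{N}_0'$; no analogue of Lazarsfeld's theorem (which is what makes the corresponding $K3$-pencil argument work on $\Mbar_g$) is available here, the specific families you name are of too small dimension to be dense, and $\overline{N}_0'$ being the \emph{branch divisor} of $P$ makes avoidance by one-parameter families of Pryms a genuinely delicate claim. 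The sentence ``this should allow me to verify\dots'' is where the theorem actually lives.

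Even granting the family, your deduction of $\kappa(\aa_5,\overline{N}_0')=0$ does not follow. From $B_t\cdot E=0$ and $B_t\not\subset E$ you get only that a general $B_t$ is \emph{disjoint} from $E$; this does not force $E$ to be supported on $\overline{N}_0'$. Zariski density of $\bigcup_t B_t$ leaves room for $E$ to live in the complement of the open set actually covered, or to be swept out by a codimension-one subfamily of the $B_t$; in a rank-two Picard group the most your family yields is that every component of $E$ has class proportional to $[\overline{N}_0']$, which still permits a second irreducible divisor in the same numerical class and hence $h^0(m\overline{N}_0')\ge 2$. To close this you would need the $B_t$ to pass through \emph{every} point of $\aa_5\setminus\overline{N}_0'$, or to revert to a negativity argument of the kind the paper uses. (Your upper bound from $[\overline{N}_0']=108\lambda_1-14D$, the remark that $[\overline{\theta}_{\mathrm{null}}]=264\lambda_1-32D$ is not proportional, and the formal derivation of the slope bound from the hypothetical family are all fine; the slope ratio ``computation'' in your third paragraph is, however, redundant, being equivalent to the assumption $B_t\cdot\overline{N}_0'=0$.)
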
 

The proof relies  on the intricate geometry of the generically $27:1$ Prym map 
$$P:\overline{\mathcal{R}}_6\dashrightarrow {}\aa_5{}.$$ The map $P$ has been investigated in detail in \cite{DonagiSmith} and it displays some breathtakingly
beautiful geometry. For instance the Galois group of $P$ is the Weyl group of $E_6$, that is, the subgroup of $\mathfrak S_{27}$ consisting of permutations preserving the
intersection product on a fixed cubic surface. The divisor $N_0'$ is the branch locus of $P$, whereas the ramification divisor $\mathcal{Q}$ has three alternative realizations as a geometric subvariety of $\mathcal{R}_6$, see \cite{FarkasVerraNik} and \cite{FGSMV} for details. One should view this statement as a Prym analogue of the various incarnations of the $K3$ divisor
$\mathcal{K}_{10}$ on $\M_{10}$, see \cite{FarkasPopa}:
\begin{theorem}\label{ram}
The ramification divisor $\mathcal{Q}$ of the Prym map $P:\mathcal{R}_6\rightarrow \cA_5$ has the following geometric incarnations:
\begin{enumerate}
\item $\bigl\{[C, \eta]\in \mathcal{R}_6:\mathrm{Sym}^2 H^0(C, K_C\otimes \eta)\stackrel{\ncong}\longrightarrow H^0(C, K_C^{\otimes 2})\bigr\}.$
\item $\bigl\{[C, \eta]\in \mathcal{R}_6: C \mbox{ has a sextic plane model with a totally tangent conic}\bigr\}.$
\item $\bigl\{[C, \eta]\in \mathcal{R}_6: C \mbox{ is a section of a Nikulin surface}\bigr\}.$
\item $\bigl\{[C, \eta]\in \mathcal{R}_6: \mathrm{Sing}^{\mathrm{st}}(\Xi)\neq 0\bigr\}.$
\end{enumerate}
\end{theorem}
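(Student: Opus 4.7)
The plan is to establish the four realizations in a logical order, beginning with description (1), which essentially encodes the infinitesimal definition of ramification, and then constructing geometric incarnations of elements in the kernel of the relevant multiplication map.

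First, I would invoke Beauville's classical description of the codifferential of the Prym map. At $[C,\eta]\in\mathcal{R}_6$, the cotangent space to $\mathcal{A}_5$ at $P(C,\eta)$ is canonically identified with $\mathrm{Sym}^2 H^0(C, K_C\otimes \eta)$, while the cotangent space to $\mathcal{R}_6$ at $[C,\eta]$ is $H^0(C, K_C^{\otimes 2})$, and $dP^*$ is the multiplication map $\mu_\eta$. Since for $g=6$ both spaces have dimension $15$ (as $h^0(K_C\otimes\eta)=g-1=5$ and $h^0(K_C^{\otimes 2})=3g-3=15$), the Prym map is ramified precisely where $\mu_\eta$ fails to be an isomorphism. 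This establishes (1) as a synonym for the ramification divisor. The equivalence (1)$\Leftrightarrow$(4) then follows from the Kempf-Mumford analysis of the theta divisor: each nontrivial element of $\ker(\mu_\eta)$ corresponds to a quadric through the Prym-canonical image $\phi_{K_C\otimes\eta}(C)\subset\mathbb{P}^4$, and such a quadric arises as the projectivized tangent cone at a stable singularity of $\Xi$ (i.e., one that survives generically in deformation, as opposed to singularities forced by a vanishing theta-null or by the Prym being a Jacobian).

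Next, for (2), I would exploit the linear system $\mathfrak g^2_6$ on a genus $6$ plane sextic $C\subset \mathbb{P}^2$ with $K_C=\mathcal{O}_C(3)$. If a totally tangent conic $Q$ meets $C$ in $2D$ with $D$ effective of degree $6$, then $\mathcal{O}_C(D)^{\otimes 2}\cong\mathcal{O}_C(2)$, so $\eta:=\mathcal{O}_C(D)\otimes\mathcal{O}_C(-1)$ is a nonzero $2$-torsion class and $K_C\otimes\eta\cong\mathcal{O}_C(2)\otimes\mathcal{O}_C(D)$. The pencil of conics vanishing on $D$ produces a distinguished quadric in the image of $\phi_{K_C\otimes\eta}$, giving a nonzero element of $\ker(\mu_\eta)$. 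Conversely, starting from an element of $\ker(\mu_\eta)$ of maximal rank on a general point of $\mathcal{Q}$, one recovers the $\mathfrak g^2_6$ and the totally tangent conic by projectivizing the construction. For (3), I would use the geometry of Nikulin surfaces: a K3 surface $S$ equipped with a $2$-torsion class $\epsilon\in\mathrm{Pic}(S)$ whose general section $C\subset S$ carries $\eta=\epsilon\otimes\mathcal{O}_C$ as a non-trivial $2$-torsion bundle. The syzygies forced by the ambient Nikulin structure on $S$ produce a quadric through the Prym-canonical image of $C$, giving a kernel element of $\mu_\eta$ analogously to the way ordinary $K3$ sections produce syzygies witnessed by $\mathcal{K}_{10}\subset\mathcal{M}_{10}$.

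The substantive obstacle is not the production of kernel elements in each of cases (2), (3), (4), but rather the \emph{exhaustion} statement: each of these loci is precisely $\mathcal{Q}$, not a proper subvariety. This requires three verifications. First, one must count dimensions: the moduli of Nikulin surfaces of the appropriate polarization has dimension $11$, and genus-$6$ sections move in a $3$-dimensional linear system, giving $14=\dim\mathcal{Q}$; similarly for sextics with a totally tangent conic, parameters must balance against those of the stratum. Second, irreducibility of $\mathcal{Q}$ (or a careful accounting of its components) must be checked, so that a single divisorial component cannot decompose the four realizations. Third, one has to ensure that the kernel elements manufactured from the geometric data (2)--(3) correspond to \emph{stable} singularities in the sense of (4), which is where the most delicate point arises: excluding contributions from the $\theta_{\mathrm{null}}$ component of $N_0$ and from the Jacobian locus $\tau(\mathcal{M}_5)\subset\mathcal{A}_5$ requires a local analysis near the boundary of $P$ and direct reference to the computed class $[\overline{N}'_0]$.

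I expect the Nikulin-surface incarnation (3) to be the hardest to pin down rigorously, because it requires both a nontrivial moduli-theoretic construction of the universal Nikulin family with a genus-$6$ section and a matching with the Prym-theoretic description; the other equivalences ultimately reduce to infinitesimal linear algebra and classical projective geometry of sextics.
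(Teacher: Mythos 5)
Your overall strategy tracks the one the paper sketches: realization (1) is the Beauville description of the codifferential of $P$ as the multiplication map $\mu_\eta:\mathrm{Sym}^2H^0(C,K_C\otimes\eta)\to H^0(C,K_C^{\otimes 2})$ between $15$-dimensional spaces; the bridge to (4) is that nonzero elements of $\ker\mu_\eta$ are quadrics through the Prym-canonical curve arising as tangent cones at stable singularities of $\Xi$ (which the paper characterizes Brill--Noether-theoretically, via the existence of $L$ on the double cover $\tilde{C}$ with $\mathrm{Nm}_f(L)=K_C$ and $h^0(\tilde{C},L)\geq 4$ --- an equivalence due to Smith--Varley and Debarre, not merely ``Kempf--Mumford analysis''); and (2), (3) are geometric sources of such quadrics. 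You also correctly isolate the real difficulty as the exhaustion/irreducibility statement. Since the survey itself defers these steps to \cite{DonagiSmith}, \cite{FarkasVerraNik} and \cite{FGSMV}, your architecture is consistent with the paper's.

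However, two of your concrete computations are wrong. First, a genus $6$ curve with a plane sextic model is necessarily a $4$-nodal sextic, so $K_C\neq\mathcal{O}_C(3)$ (that identity holds for a \emph{smooth} sextic, which has genus $10$); on the normalization one has $K_C=\nu^{*}\mathcal{O}(3)(-N)$ with $N$ the degree-$8$ preimage of the nodes, and your formula $K_C\otimes\eta\cong\mathcal{O}_C(2)\otimes\mathcal{O}_C(D)$ has degree $18$ rather than the required $2g-2=10$. The totally tangent conic construction must be redone keeping track of the nodes. Second, the Nikulin dimension count is off: a genus-$6$ section of a Nikulin surface moves in a linear system of dimension $g=6$, not $3$, so the parameter count gives $11+6=17>15=\dim\mathcal{R}_6$. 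Far from ``balancing against $\dim\mathcal{Q}=14$,'' this count predicts that Nikulin sections should \emph{dominate} $\mathcal{R}_6$; the content of \cite{FarkasVerraNik} is precisely that dominance fails for $g=6$ (and only there in the range $g\leq 7$), the image being the divisor $\mathcal{Q}$ with $3$-dimensional fibres. So the divisoriality of incarnation (3) cannot be obtained by the dimension count you propose --- it is indeed the hard point, as you suspected, but for the opposite numerical reason.
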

The first realization is the most straightforward and it relies on the description of the differential of the Prym map via Kodaira-Spencer theory, see \cite{DonagiSmith}. Description $(4)$ refers to stable singularities of the theta divisor $\Xi\subset P(C, \eta)$ associated to the Prym variety. In particular, $\Xi$ has a \emph{stable singularity} if and only if the \'etale double cover $f:\tilde{C}\rightarrow C$ induced by the half-period $\eta\in \mbox{Pic}^0(C)$, $\eta^{\otimes 2}=\mathcal{O}_C$, carries a line bundle $L$ with $\mbox{Nm}_f(L)=K_C$ with $h^0(\tilde{C}, L)\geq 4$.  Description $(3)$ concerns moduli spaces of $K3$ surfaces endowed with a symplectic involution (Nikulin surfaces): see \cite{FarkasVerraNik}. The equivalence $(1)\Leftrightarrow (4)$ can be regarded as stating that $\mathcal{Q}$ is simultaneously the Koszul  and the Brill-Noether divisors (in Prym sense) on the moduli space $\mathcal{R}_6$!
By a local analysis, if $\pi:\overline{\mathcal{R}}_6\rightarrow \Mbar_6$ is the morphism forgetting the half-period, one proves the following relation in $\mbox{Pic}(\overline{\mathcal{R}}_6)$:
\begin{equation}\label{pullback}
P^*(\overline{N}_0')= 2 \overline{\mathcal{Q}}+\overline{\mathcal{U}}+20\deltairr^{''},
\end{equation}
where $\mathcal{U}=\pi^*(\mathcal{GP}_{6, 4}^1)$ is the \emph{anti-ramification} divisor of $P$  and finally, $\deltairr^{''}$ denotes the boundary divisor class corresponding to Wirtinger
coverings. Using the different parametrizations of $\mathcal{Q}$ provided by Theorem \ref{ram}, one can construct a sweeping rational curve $R\subset \overline{\mathcal{Q}}$ such that 
$$R\cdot \overline{\mathcal{U}}=0, \  R\cdot \deltairr^{''}=0 \ \mbox{ and } R\cdot \overline{\mathcal{Q}}<0.$$
Via a simple argument, this shows that in formula (\ref{pullback}), the divisor $\overline{\mathcal{Q}}$ does not contribute to the linear system 
$|P^*(\overline{N}_0')|$. Similar arguments show that $\overline{\mathcal{U}}$ and $\deltairr^{''}$ do not contribute either, that is, $N_0'$ is the only effective divisor in its linear 
system, or equivalently $\kappa(\aa_5, \overline{N}_0')=0$. In particular $s(\aa_5)=s(\overline{N}_0')=\frac{54}{7}$. This argument shows that $\overline{N_0'}$ is rigid, hence $s'(\aa_5)>s(\aa_5)$, so we ask:

\begin{problem} What is $s'(\aa_5)$? 
\end{problem}

 A space related to both $\M_g$ and $\A_g$ is the universal theta divisors $\mathfrak{Th}_g\rightarrow \M_g$, which can be viewed as the universal degree $g-1$ symmetric product 
$\Mbar_{g, g-1}\dblq\mathfrak{S}_{g-1}$ over $\M_g$. The following result has been recently established by Verra and the second author in \cite{FarkasVerraThet}:
\begin{theorem} $\mathfrak{Th}_g$ is a uniruled variety for genus $g\leq 11$ and of general type for $g\geq 12$.
\end{theorem}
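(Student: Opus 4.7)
The two ranges need separate techniques, so I would handle them in parallel on the model $\mathfrak{Th}_g\cong\mijbar{g}{g-1}\dblq\mathfrak S_{g-1}$.

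For the general-type range $g\geq 12$, I would exploit that $\pic{\mathfrak{Th}_g}$ identifies with the $\mathfrak S_{g-1}$-invariant part of $\pic{\mijbar{g}{g-1}}$, generated by $\lambda$, the symmetric class $\psi:=\sum_{i=1}^{g-1}\psi_i$, the irreducible boundary $\deltairr$, and the $\mathfrak S_{g-1}$-orbits of the reducible boundary divisors. Starting from the standard formula $K_{\mijbar{g}{n}}=13\lambda-2\delta+\psi$ and adding the ramification contribution of the quotient map along the pairwise-coincidence loci, I would write $K_{\mathfrak{Th}_g}$ explicitly in this basis. The heart of the argument is to exhibit an effective divisor $\mathcal Z$ on $\mathfrak{Th}_g$ whose class has small enough coefficients of $\psi$ and of the reducible boundaries relative to its $\lambda$-coefficient; the natural source is a Brill--Noether or Koszul condition on the universal degree-$(g-1)$ line bundle $K_C-D$ attached to the pair $(C,D)$, in the spirit of the syzygy divisors of Section~2 and \cite{FarkasKoszul}. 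With $[\mathcal Z]$ in hand one tries to write $K_{\mathfrak{Th}_g}=\alpha[\mathcal Z]+\beta\lambda+(\text{effective boundary})$ with $\alpha,\beta>0$; the critical calibration is what forces the threshold at $g=12$. A final Reid--Tai check at the singularities of $\mathfrak{Th}_g$ ensures pluricanonical forms extend to a desingularization, so bigness of $K_{\mathfrak{Th}_g}$ upgrades to general type. The main obstacle is producing a divisor $\mathcal Z$ sharp enough to beat the coefficient $1$ of $\psi$ in $K_{\mathfrak{Th}_g}$; everything else is a calibration.

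For the uniruled range $g\leq 11$, I would construct a rational curve through a general $(C,D)\in\mathfrak{Th}_g$, writing $D=p_1+\cdots+p_{g-1}$. For $g\leq 9$ and $g=11$, a generic $[C]\in\M_g$ lies on a polarized $K3$ surface $(S,\OO_S(C))$ with $h^0(\OO_S(C))=g+1$, by \cite{Mukai1}. Imposing passage through the $g-1$ general points $p_i$ is $g-1$ independent linear conditions on the complete linear system $|\OO_S(C)|\cong\PP^g$, cutting it down to a pencil $\PP^1$ of smooth curves $C_t\in|\OO_S(C)|$ all containing the divisor $D$. The assignment $t\mapsto(C_t,D)$ is then a non-constant morphism $\PP^1\to\mathfrak{Th}_g$ through $(C,D)$, giving uniruledness at a Zariski-dense set.

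The genuine obstacle is the case $g=10$, where by \cite{FarkasPopa} the $K3$-section locus $\K_{10}$ is only a divisor in $\M_{10}$, so the argument above misses a general point of $\mathfrak{Th}_{10}$. Here I would substitute a different ruled parametrization: either realize general genus-$10$ curves as sections of an alternative rational surface and slice by point conditions as before, or use the unirationality of $\M_{10}$ to lift a ruling of a dominating unirational incidence variety through a general $(C,D)$. In each substitute construction the only input one must check is independence of the $g-1$ incidence conditions for a generic choice of points, after which the same pencil-type argument produces the required rational curve and completes the $g=10$ case.
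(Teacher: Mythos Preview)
The paper is a survey and does not prove this theorem; it simply cites \cite{FarkasVerraThet} and adds one sentence of commentary, namely that the proof there also computes the relative effective cone of $\mathfrak{Th}_g$ over $\Mbar_g$ as being spanned by $\tilde{\Delta}_{0,2}$ (the locus of non-reduced divisors) and by the universal ramification divisor of the Gauss map, i.e.\ the closure of the locus of $[C,x_1+\cdots+x_{g-1}]$ for which $|K_C(-x_1-\cdots-x_{g-1})|$ has non-reduced support. So there is no in-paper argument to compare against beyond that hint.

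That hint is consistent with, and sharper than, your plan for $g\ge 12$: the specific effective class you are looking for is not an abstract Koszul divisor but the Gauss ramification divisor, which is precisely a condition on the residual series $K_C-D$ as you guessed. What you have left genuinely unspecified is the calibration showing that $g=12$ is the threshold; without naming the divisor and computing its class you cannot see where the inequality $K_{\mathfrak{Th}_g}\in\mathrm{int}\,\eff{\mathfrak{Th}_g}$ first holds, and this is the actual content of the theorem rather than the Reid--Tai bookkeeping.

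On the uniruled side, your $K3$-pencil argument for $g\le 9$ and $g=11$ is correct and is the natural one. The case $g=10$ is a real gap in your proposal: you acknowledge that $\mathcal{K}_{10}$ is a divisor so the $K3$ construction misses the general point, but ``substitute a ruling of a dominating unirational incidence variety'' is not an argument---unirationality of $\M_{10}$ does not by itself produce a rational curve through a general point of the larger space $\mathfrak{Th}_{10}$. You would need an explicit rational surface carrying the general genus-$10$ curve with enough freedom to impose nine point conditions, and you have not supplied one.
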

The proof gives also a description of the relative effective cone of $\mathfrak{Th}_g$ over $\Mbar_g$ as being generated by the boundary divisor $\tilde{\Delta}_{0, 2}$ corresponding to non-reduced effective divisors of degree $g-1$ and by the universal ramification divisor of the Gauss map, that is, the closure of the locus of points
$[C, x_1+\cdots+x_{g-1}]$ for which the support of the $0$-dimensional linear series $|K_C(-x_1-\cdots-x_{g-1})|$ is non-reduced. The paper \cite{FarkasVerraThet} also gives a complete birational classification of the universal symmetric product $\Mbar_{g, g-2}\dblq\mathfrak S_{g-2}$, showing that, once again, the birational character of the moduli space changes around genus $g=12$.

We close this section with a general comment. Via the Torelli map, the moduli space of curves sits between  $\overline{H}_g$ and $\aa_g$. In terms of lower bounds for slopes, $\overline{H}_g$ and $\aa_g$ behave totally different for large $g$: the former has lower bound converging to $8$, whereas the latter approaches $0$. The failure to find of effective divisors with small slope seems to suggest that $\Mbar_g$ is ``closer'' to $\overline{H}_g$, while the failure to find moving curves seems to suggest that $\Mbar_g$ is ``closer'' to $\aa_g$.

\section{Effective classes on spaces of stable curves and maps of genus $0$}

\subsection{Symmetric quotients in genus $0$} \label{symmetric}
On $\Mbar_{0, n}$ itself, the components  of the boundary $\Delta$ are the loci $\Delta_I$, indexed by $I \subset \{ 1, 2, \ldots , n\}$ (subject to the identification $\Delta_I = \Delta_{I^\vee}$ and to the stability condition that both $|I|$ and $|I^\vee|$ be at least $2$), whose general point parameterizes a reducible curve having two components meeting in a single node and the marked points indexed by $I$ on one side and those indexed by $I^\vee$ on the other. These generate, but not freely, $\pic{\Mbar_{0, n}}$, see~\cite{KeelIntersection}.

The starting point for the study of effective cones in genus $0$ is the paper of Keel and McKernan~\cite{KeelMcKernanContractible}. In it, they consider the space $\widetilde{\M}_{0, n}$ that is the quotient of $\Mbar_{0, n}$ by the natural action of $\Sn$ by permutations of the marked points. For instance $\widetilde{\M}_{0, 2g+2}$ is isomorphic to the compactified moduli space of hyperelliptic curves of genus $g$ already discussed in this survey. The boundary $\Dsym$ of $\widetilde{\M}_{0, n}$ has components $\Dsym_i$ that are simply the images of the loci $\Delta_i$ on $\Mbar_{0, n}$ defined as the union of all $\Delta_I$ with $|I|=i$ for $i$ between from $2$ and~$\lfloor\frac{g}{2}\rfloor$.

\begin{lemma} \label{effmzeroninv} Every $\Sn$-invariant, effective divisor class $D$ on $\Mbar_{0, n}$ is an effective sum of the boundary divisors $\Delta_i$
\end{lemma}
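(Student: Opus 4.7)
The starting point is Keel's theorem that $\pic{\Mbar_{0, n}}$ is spanned by the boundary classes $\Delta_I$ modulo the four-point Keel relations; averaging over $\Sn$ shows that the invariant subspace $\pic{\Mbar_{0, n}}^{\Sn}$ is spanned by the symmetrized classes $\Delta_i = \sum_{|I|=i}\Delta_I$. Thus every $\Sn$-invariant class admits an expansion $D \equiv \sum c_i \Delta_i$, and the lemma amounts to producing a representative in which all $c_i \geq 0$.

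My plan has two main steps. First, I would peel off the boundary part of $D$: if an irreducible boundary component $\Delta_I$ appears with multiplicity $m_I > 0$ in $D$, then by $\Sn$-invariance $m_I$ depends only on $|I|$, so the boundary-supported part of $D$ equals $\sum_i n_i \Delta_i$ with non-negative integers $n_i$; subtracting this manifestly positive contribution, I may assume $D$ contains no irreducible boundary component and hence restricts to an effective Cartier divisor on each $\Delta_I \cong \Mbar_{0, i+1} \times \Mbar_{0, n-i+1}$. Second, I would argue by induction on $n$, with base cases $n \leq 5$ handled by the classical fact (stated elsewhere in this section) that the full effective cone is already generated by the boundary in that range. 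In the inductive step, restrict $D$ to each boundary divisor and apply a product form of the inductive hypothesis to express the restriction as a non-negative combination of boundary classes on the two factors; then lift these expressions back to $\Mbar_{0, n}$ and average over $\Sn$ to obtain the desired global expression.

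The hard part will be the gluing step: the lift of an effective boundary expression from $\Delta_I$ back to $\Mbar_{0, n}$ is ambiguous modulo classes supported away from $\Delta_I$, and the independent lifts from different $\Delta_I$'s must be reconciled using the $\Sn$-invariant part of Keel's relations. To control this, one supplements the boundary-restriction argument with intersection-number tests against the $\Sn$-orbits of F-curves, the one-dimensional topological strata of $\Mbar_{0, n}$. After the reduction in step one, $D \cdot F \geq 0$ for every such F-curve $F$, since $D$ contains no boundary component, and, after averaging, the resulting pairings with the $\Delta_i$ are lower-triangular with positive diagonal in a suitable ordering, forcing each $c_i$ to be non-negative. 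This is essentially the strategy of Keel and McKernan~\cite{KeelMcKernanContractible}; the technical heart is the combinatorial verification of the F-curve intersection numbers and of the triangularity claim for their symmetrized span.
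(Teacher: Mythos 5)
Your first reduction (subtracting the boundary-supported part so that $D$ contains no $\Delta_I$) and your expectation that the proof should come down to a triangular system of test-curve inequalities are both exactly right, and the intermediate step you yourself flag as problematic---restricting to each $\Delta_I\cong \Mijbar{0}{i+1}\times\Mijbar{0}{n-i+1}$ and inducting on $n$---is indeed not needed and is not what the paper does. But the engine you finally settle on is wrong, and the error is not a technicality. The paper's test curves are \emph{not} the one-dimensional topological strata (F-curves). The base case uses the family $B$ obtained by fixing a smooth $n$-pointed rational curve outside the support of $D$ and varying the last marked point: this $B$ passes through the \emph{interior} of $\Mbar_{0,n}$, meets $\Delta_2$ in $n-1$ points and no other $\Delta_j$, giving $b_2\ge 0$. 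The inductive step (on the index $i$, not on $n$) uses the family obtained by varying the attaching point on one side of a \emph{generic} point of $\Delta_i$; this curve lies in $\Delta_i$ but in no deeper stratum, and a self-intersection computation gives $B\cdot D=(n-i)b_{i+1}-(n-i-2)b_i\ge 0$, which cascades. In both cases the crucial inequality $B\cdot D\ge 0$ is legitimate because $B$ passes through a general point of $\Mbar_{0,n}$, respectively of $\Delta_i$, and by your own first reduction $D$ contains neither.

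For F-curves this justification collapses. An F-curve of type $(N_1,N_2,N_3,N_4)$ with all $|N_j|\ge 2$ lies in a boundary stratum of codimension $4$, and an effective divisor containing no boundary \emph{divisor} can perfectly well contain such a stratum; then every F-curve of that type lies inside $D$ and $D\cdot F$ may be negative. So the assertion ``$D\cdot F\ge 0$ for every F-curve $F$, since $D$ contains no boundary component'' is unjustified---this is precisely the difficulty that makes the F-conjecture hard, and the paper's introduction even cites Pixton's example on $\Mbar_{0,12}$ of an F-nef divisor that is not an effective sum of boundaries, showing that the symmetrized F-inequalities alone cannot be expected to force $b_i\ge 0$ by a formal triangularity argument. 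Your ``lower-triangular with positive diagonal'' claim for the symmetrized F-curve pairings is also asserted rather than verified, and the actual matrix of pairings $F_{a,b,c,d}\cdot\Delta_i$ has negative entries scattered in a way that is not triangular in any ordering. To repair the proof, replace the F-curves by the two families above: a pencil through a general point of moduli for the base case, and a ``moving node'' family through a general point of $\Delta_i$ for the step from $b_i\ge 0$ to $b_{i+1}\ge 0$.
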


\begin{corollary} \label{effmzeronsym} The cone $\eff{\widetilde{\M}_{0, n}}$ is simplicial, and is generated by the boundary classes $\Dsym_i$.
\end{corollary}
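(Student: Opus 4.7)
My plan is to deduce the corollary from Lemma~\ref{effmzeroninv} by pulling back along the finite quotient map $\pi\colon \Mbar_{0,n}\to \widetilde{\M}_{0,n}$. Since $\pi$ is a finite surjective morphism of normal projective varieties and we work with rational coefficients, the pullback $\pi^*\colon \pic{\widetilde{\M}_{0,n}}\hookrightarrow \pic{\Mbar_{0,n}}$ is injective with image precisely the $\Sn$-invariant subgroup, and effective classes on the quotient correspond to effective $\Sn$-invariant classes upstairs. Because $\pi$ is $\Sn$-equivariant and $\Sn$ acts transitively on the components $\Delta_I$ with $|I|=i$, one has $\pi^*\Dsym_i = r_i\,\Delta_i$ for some positive rational $r_i$ that records the ramification of $\pi$ along the generic point of $\Delta_I$.

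Given these basic properties, the statement that the $\Dsym_i$ generate $\eff{\widetilde{\M}_{0,n}}$ is almost immediate. For any $[E]\in \eff{\widetilde{\M}_{0,n}}$, the pullback $\pi^*[E]$ is an $\Sn$-invariant effective class on $\Mbar_{0,n}$, so Lemma~\ref{effmzeroninv} writes $\pi^*[E]=\sum_i a_i\,\Delta_i$ with $a_i\ge 0$. The identification $\pi^*\Dsym_i=r_i\Delta_i$ combined with the injectivity of $\pi^*$ then forces $[E]=\sum_i (a_i/r_i)\,\Dsym_i$, an effective combination of the $\Dsym_i$.

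For the simplicial assertion, I would show that the classes $\Dsym_i$ are linearly independent in $\pic{\widetilde{\M}_{0,n}}$, equivalently (by injectivity of $\pi^*$) that the $\Delta_i$ are linearly independent in $\pic{\Mbar_{0,n}}$. Since the known relations among the $\Delta_I$ are all of WDVV type and mix components belonging to different orbit sums, this can be checked by intersecting against a suitable family of $\Sn$-invariant test curves $B_j$---for example, the curves obtained by fixing a generic one-nodal configuration of marking-type $j$ and letting one marked point vary on a chosen rational component---and verifying that the matrix of intersection numbers $B_j\cdot \Dsym_i$ is non-singular.

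The main obstacle I foresee is not conceptual but rather bookkeeping: pinning down the ramification constants $r_i$ by a local analysis of $\pi$ at a generic point of each boundary component, and checking that the chosen test curves really do give a non-singular intersection matrix (with the expected normalization when $i = n/2$, where $\Delta_I$ and $\Delta_{I^\vee}$ coincide and introduce an extra factor of two). Both tasks are local computations along the boundary, and the substantive content of the corollary is entirely carried by Lemma~\ref{effmzeroninv}.
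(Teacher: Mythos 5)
Your proposal is correct and follows essentially the route the paper intends: the corollary is stated as an immediate consequence of Lemma~\ref{effmzeroninv} via the standard identification (with $\Q$-coefficients) of $\pic{\widetilde{\M}_{0, n}}$ with the $\Sn$-invariant part of $\pic{\Mbar_{0, n}}$ under pullback along the finite quotient map. The linear independence of the symmetrized classes needed for the simplicial assertion is indeed the only extra ingredient, and your test-curve verification is consistent with the curves already used in the paper's proof of the lemma.
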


\begin{proof}[Proof of Lemma~\ref{effmzeroninv}]  Any $\Sn$-invariant divisor  $D$ is clearly a linear combination $\sum b_i\Delta_i$ so the point is to show that, if $D$ is effective, then we can take the $b_i$ all non-negative, and this is shown by a pretty induction using test curves. We may assume that $D$ contains no $\Delta_i$ since proving the result for the $D'$ that results from subtracting all such contained components will imply the result for $D$.

As a base for the induction, pick an $n$-pointed curve $\bigl(C, [p_i, \ldots, p_n]\bigr)$ not in the support of $D$ and form a test family with base $B\cong C$ by varying $p_n$, while fixing the other $p_i$. Since $C$ is not in $D$, the curve $B$ must meet $D$ non-negatively. On the other hand, $ B\cdot \Delta_2 = (n-1)$---there is one intersection each time $p_n$ crosses one of the other $p_i$---and is disjoint from the other $\Delta_i$. Hence, $b_2 \ge 0$.

Now assume inductively that $b_{i} \ge 0$. Choose a generic curve
$$C = \bigl(C', [p_1', \ldots, p_i']\bigr) \cup \bigl(C', [p_1'', \ldots, p_{n-i}'']\bigr) $$
in $\Delta_i$ in which $q'$ on $C'$ has been glued to $q''$ on $C''$ and form the family $B \cong C''$ by keeping $q'$ and the marked points on both sides fixed but varying $q''$ (as in  \cite{Moduli}*{Example (3.136)}). As above $B \cdot D \ge 0$, $B \cdot \Delta_j= 0$ unless $j$ is either $i$ or $i+1$. And, as above, $B \cdot \Delta_{i+1} = n-i$ (we get one intersection each time $q''$ crosses a $p_k''$), but now $B$ lies \emph{in} $\Delta_i$ so to compute $B \cdot \Delta_i$ we use the standard approach of \cite{Moduli}*{Lemma (3.94)}. On the ``left'' side, the family over $B$ is $C' \times C'$ and the section corresponding to $q'$ has self-intersection $0$. On the ``right'' side, the family is $C'' \times C'' \cong \PP^1 \times \PP^1$ blown up at the points where the constant sections corresponding to the $p_k''$ meet the diagonal section corresponding to $q''$ and hence the proper transform of that section has self-intersection $\bigl(2-(n-i)\bigr)$. The upshot is that $B \cdot D = (n-i)b_{i+1}- (n-i-2)b_i$ completing the induction.
\end{proof}

In fact, this proof shows quite a bit more. It immediately gives the first inequalities in Corollary~\ref{keelmckernan4.8} and the others follow by continuing the induction and using the identifications $\Delta_i = \Delta_{n-i}$.

\begin{corollary}[\cite{KeelMcKernanContractible}*{Lemma 4.8}] \label{keelmckernan4.8} If $D =\sum b_i\Delta_i$ is an effective divisor class on $\widetilde{\M}_{0, n}$ whose support does not contain any $\Delta_i$ (or, if $D$ is nef), then $ (n-i)b_{i+1} \ge (n-i-2)b_i$ for $ 2\le i \le \lfloor\frac{n}{2}\rfloor -1$ and $ ib_{i-1} \ge (i-2)b_i$ for $ 3\le i \le \lfloor\frac{n}{2}\rfloor$.
\end{corollary}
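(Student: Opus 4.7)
The plan is to obtain the first family of inequalities as an immediate byproduct of the self-intersection calculation at the heart of the induction in the proof of Lemma~\ref{effmzeroninv}, and the second family by invoking the identification $\Dsym_i = \Dsym_{n-i}$ on the symmetric quotient, which lets me apply the first family ``on the other side'' and relabel.

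First I would fix an index $i$ with $2 \le i \le \lfloor n/2 \rfloor - 1$ and reuse the test curve $B$ of the inductive step: a generic reducible pointed curve $C = \bigl(C', [p_1', \ldots, p_i']\bigr) \cup \bigl(C'', [p_1'', \ldots, p_{n-i}'']\bigr)$ with the gluing point $q' \in C'$ and all marked points held fixed and with the gluing point $q'' \in C''$ allowed to move, so that $B \cong C''$. The proof of Lemma~\ref{effmzeroninv} already computes
$$B \cdot D \;=\; (n-i)\,b_{i+1} \;-\; (n-i-2)\,b_i.$$
Under either of the corollary's two hypotheses the intersection $B \cdot D$ is non-negative: if $D$ contains no $\Delta_j$ in its support, a generic choice of the moduli parameters keeps $B$ outside $D$; if $D$ is nef, non-negativity is automatic. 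The first family of inequalities is then read off directly.

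For the second family I would apply precisely the same construction, but now on $\Mbar_{0,n}$ itself with $i$ in the extended range $2 \le i \le n-3$ (which is entirely legitimate before taking the $\Sn$-quotient). The same formula gives $(n-i)\,b_{i+1} \ge (n-i-2)\,b_i$ in this extended range. Substituting $i \mapsto n-j$ with $3 \le j \le \lfloor n/2 \rfloor$ transforms this into $j\,b_{n-j+1} \ge (j-2)\,b_{n-j}$. The identification $\Delta_I = \Delta_{I^\vee}$ descends to $\Dsym_k = \Dsym_{n-k}$ on $\widetilde{\M}_{0,n}$, so $b_{n-j+1} = b_{j-1}$ and $b_{n-j} = b_j$ on the quotient, and the inequality becomes $j\,b_{j-1} \ge (j-2)\,b_j$, which is the second family after renaming $j$ to $i$.

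The step I expect to require the most care is the descent from $\Mbar_{0,n}$ to $\widetilde{\M}_{0,n}$ in the balanced case $n = 2i$. There the subsets $I$ and $I^\vee$ have equal size, so a boundary component $\Delta_I \subset \Mbar_{0,n}$ is preserved by a nontrivial element of $\Sn$, and one must verify that the test curve $B$ and its intersection numbers with the boundary classes still descend properly to $\widetilde{\M}_{0,n}$ without spurious factors of two. This is a routine bookkeeping check, but it is the one place where the symmetric quotient genuinely enters the argument.
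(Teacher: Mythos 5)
Your proposal is correct and matches the paper's own (very terse) justification: the first family is read off from the test-curve computation $B\cdot D=(n-i)b_{i+1}-(n-i-2)b_i\ge 0$ in the proof of Lemma~\ref{effmzeroninv}, and the second family is obtained exactly as the paper indicates, by continuing that computation past $i=\lfloor n/2\rfloor-1$ and relabelling via $\Delta_i=\Delta_{n-i}$. Your index substitution $i\mapsto n-j$ and the resulting range $3\le j\le\lfloor n/2\rfloor$ check out, and the homogeneity of the inequalities makes the factor-of-two bookkeeping in the balanced case $n=2i$ harmless.
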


At this point, it is natural to hope that we might be able to replace the twiddles in Corollary~\ref{effmzeronsym} with bars with a bit more work. We will see in \S\S\ref{mzeronbar} that this is far from the case.

Next, we give another application of Lemma~\ref{effmzeroninv}, also due to Keel and kindly communicated to us by Jason Starr, this time to the Kontsevich moduli spaces of stable maps $\Mbar_{0, 0}(\PP^d, d)$. A general map $f$ in $\Mbar_{0, 0}(\PP^d, d)$  has a smooth source curve $C$ with linearly non-degenerate image $f(C) \subset \PP^d$ of degree $d$ and hence is nothing more than a rational normal curve. The space $\Mbar_{0, 0}(\PP^d,1)$ is just the Grassmannian of lines in $\PP^d$. The philosophy is to view $\Mbar_{0, 0}(\PP^d, d)$ as a natural compactification of the family of such curves. For example, $\Mbar_{0, 0}(\PP^2,2)$ has an open stratum consisting of plane conics. One boundary divisor arises when the curve $C$ becomes reducible, the map $f$ has degree $1$ on each component and image consists of a pair of transverse lines. But there is a second component, in which the map $f$ degenerates to a double cover of a line in which the image is ``virtually marked'' with the two branch points. These intersect in a locus of maps from a pair of lines to a single line in which only the image of the point of intersection is ``virtually marked''.

This generalizes: $\pic{\Mbar_{0,0}(\PP^d, d)}$ is freely generated by effective classes $\Gamma_i$, the closure of the locus whose generic map has a domain with two components on which it has degrees $i$ and $d-i$ with $1 \le i \le \lfloor \frac{d}{2}\rfloor$, and a class $G$, the degenerate locus where $f(C)$ lies in a proper subspace of $\PP^d$---see \cite{PandharipandeIntersections}*{Theorem 1}.

\begin{lemma} \label{effmoobard} A class $D = aG + \sum_i b_i\Gamma_i$ is effective  if and only if $a\ge 0$ and each $b_i \ge 0$.
\end{lemma}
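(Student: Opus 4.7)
The ``if'' direction is immediate: $G$ and each $\Gamma_i$ are effective, so any non-negative rational combination is effective. For the converse, suppose $D=aG+\sum_i b_i\Gamma_i$ is effective. Exactly as in the reduction opening the proof of Lemma~\ref{effmzeroninv}, we may subtract from $D$ any copies of $G$ or the $\Gamma_i$ that happen to appear as components of an effective representative, since doing so only decreases the coefficient in question. Thus we may assume $D$ contains none of $G,\Gamma_1,\ldots,\Gamma_{\lfloor d/2\rfloor}$ as a component, so that $B\cdot D\ge 0$ for every irreducible curve $B$ not contained in $\mathrm{Supp}(D)$.

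The strategy is to produce, for each generator of $\pic{\Mbar_{0,0}(\PP^d,d)}$, a moving one-parameter family of stable maps whose intersection numbers isolate the corresponding coefficient. For each $j\in\{1,\ldots,\lfloor d/2\rfloor\}$, start from a generic reducible stable map $f_0\in\Gamma_j$ whose source is a nodal union of two $\PP^1$'s, with $f_0$ of degree $j$ on one component and $d-j$ on the other, and with linearly nondegenerate image in $\PP^d$. Let $B_j$ be a generic $1$-parameter smoothing of $f_0$, realized as a complete curve in $\Mbar_{0,0}(\PP^d,d)$: its general member is a smooth rational normal curve of degree $d$, and $f_0$ is the unique reducible fiber. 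By transversality of the smoothing we have $B_j\cdot\Gamma_j=1$; by irreducibility of the other fibers, $B_j\cdot\Gamma_i=0$ for $i\ne j$; and by linear nondegeneracy of the generic fiber, $B_j\cdot G=0$. As $f_0$ varies in $\Gamma_j$, the resulting smoothings sweep out a Zariski-dense subset of $\Mbar_{0,0}(\PP^d,d)$, so we may choose $B_j\not\subset\mathrm{Supp}(D)$ and conclude $0\le B_j\cdot D=b_j$. For the coefficient $a$, take analogously a complete $1$-parameter family $B_0$ whose general member is a smooth rational normal curve and which crosses $G$ transversely once (a single member having image in a proper hyperplane); all members being irreducible gives $B_0\cdot\Gamma_i=0$ for every $i$, and the transverse crossing gives $B_0\cdot G=1$, so $0\le B_0\cdot D=a$.

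The principal technical obstacle is verifying the existence of \emph{complete} one-parameter families $B_j$ and $B_0$ with the claimed transverse intersection numbers, since locally smoothing a node only produces an arc, and its projective closure could in principle pick up unwanted crossings with other boundary divisors. For $B_j$, one must check that the normal directions to $\Gamma_j$ at a generic point $f_0$ are controlled by the single smoothing-of-the-node parameter, so that a generic projective arc through $f_0$ meets $\Gamma_j$ transversely and avoids the other boundary divisors; this follows from the local deformation theory of stable maps combined with the product description $\Gamma_j\cong\Mbar_{0,1}(\PP^d,j)\times_{\PP^d}\Mbar_{0,1}(\PP^d,d-j)$ of a generic point of $\Gamma_j$ used in \cite{PandharipandeIntersections}, plus a dimension count. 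For $B_0$, one makes the pencil explicit as a bihomogeneous map $\PP^1\times\PP^1\to\PP^d$ with controlled vanishing along one ruling, producing exactly one linearly degenerate fiber. Once these local models and transversality statements are in hand, the sign extraction argument above goes through.
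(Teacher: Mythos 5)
Your ``if'' direction and the opening reduction (discarding any copies of $G$ or the $\Gamma_i$ appearing as components) are fine, but the test curves on which the converse rests do not exist, and this is not a technical point that deformation theory can repair. The obstruction is numerical. In $\pic{\Mbar_{0, 0}(\PP^d, d)}$ one has the relation $(d+1)H = 2d\,G + \sum_i i(d-i)\Gamma_i$, where $H$ is the (Cartier, effective) divisor of maps whose image meets a fixed codimension-$2$ linear space; this is relation~(\ref{GHDeltawtrelation}) of the paper. Since $B\cdot H$ is an integer for every complete curve $B$, your curve $B_j$ with $B_j\cdot G=0$, $B_j\cdot\Gamma_j=1$ and $B_j\cdot\Gamma_i=0$ otherwise would force $(d+1)\mid j(d-j)$, equivalently $(d+1)\mid j(j+1)$, which fails for almost all $(d,j)$ (e.g.\ already for $j=1$ and every $d\ge 2$); likewise your $B_0$ with $B_0\cdot G=1$ and $B_0\cdot\Gamma_i=0$ would force $(d+1)\mid 2d$, impossible for $d\ge 2$. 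So the ``unwanted crossings'' you flag when compactifying the smoothing arc are unavoidable: no complete curve is transverse to exactly one generator and disjoint from the rest. A local smoothing of the node is only an arc, and completing it is a global problem constrained by the full Picard group.

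The paper's proof sidesteps this. For $a$ it does use a moving test curve, but one that meets $G$ positively many times rather than once: the image of a general pencil of lines under a generic degree-$d$ map $\PP^2\to\PP^d$ has every member irreducible (so all $\Gamma_i$-degrees vanish) and, by the relation above, meets $G$ in $\tfrac{d(d+1)}{2}$ points. For the $b_i$ it abandons curves entirely: it restricts $D$ to Kapranov's cross-section $K\cong\Mbar_{0, d+2}$ of maps through $d+2$ general points, on which $G$ restricts to zero and $\Gamma_i$ restricts to the symmetrized boundary class $\Delta_{i+1}$, and then quotes Lemma~\ref{effmzeroninv} on $\Sn$-invariant effective divisors. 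If you want to salvage a test-curve argument for the $b_i$, you would need the curves $B_k$ of \S\S\ref{moobarrd} (maps through $d+2$ general points meeting auxiliary linear spaces), which span the null space of $G$ in the cone of curves but whose intersection matrix with the $\Gamma_i$ is not the identity, so some additional bookkeeping is required; the restriction to $K$ is cleaner.
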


\begin{proof} All we need to show is that effective classes have positive coefficients. We start with $a$. Choose a general map $g:\PP^2\to \PP^d$ for which $g^{\shavedast}\bigl(\ooo(1)\bigr) = \ooo(d)$ (i.e. a generic $d+1$-dimensional vector space $V$ of degree $d$ polynomials in the plane). Then $g$ sends a general pencil $B$ of lines in $\PP^2$ to a pencil of rational curves of degree $d$. The image of a general element of this pencil will be a rational normal curve of degree $d$, hence non-degenerate, so $g(B) \not\subset G$ and hence $g(B) \cdot G \ge 0$.  No element of the pencil will be reducible, hence $g(B) \cdot \Gamma_i = 0$. Since we can make any rational normal curve a member of the pencil by suitably choosing $V$ and $B$, this family of test curves must meet any effective divisor, in particular, $D$, non-negatively. So $a \ge 0$.

To handle the $b_i$, we use a remark of Kapranov~\cite{KapranovVeronese} that the set $K$ of maps $[f] \in \Mbar_{0, 0}(\PP^d, d)$ whose image contains a fixed set of $d+2$ linearly general points is disjoint from $G$ (by construction) and may be identified with $\Mbar_{0, d+2}$  (by using the points as the markings), so that points of $\Gamma_i\cap K$ correspond to those of $\Delta_{i+1}$. We can choose $K$ not to lie in $D$ by taking the $(d+2)$-points to lie on a rational normal curve not in $D$ so $K$ must induce an effective class $D_K$ on $\Mbar_{0, d+2}$. But $K$ does not depend on the ordering of the $d+2$ points so $D_K$ is $\Sn$-invariant and the non-negativity of the $b_i$ follows from Lemma~~\ref{effmzeronsym}.
\end{proof}

We also note that the argument about $B$ in the first paragraph of the proof generalizes. If $f$ is a stable map with domain $C$ that is \emph{not} in $G$ or in any of the $\Gamma_i$, then $f(C)$ is an irreducible, non-degenerate curve of degree $d$ in $\PP^d$, hence is a rational normal curve. The translations of $f$ by $\PGL(d+1)$ will thus be the locus of all rational normal curves, which is dense in $\Mbar_{0, 0}(\PP^d, d)$. Thus, we get the following, see \cite{CoskunHarrisStarrEffective}*{Lemma 1.8}:

\begin{corollary}\label{movingcurvesonmoobardd} If $B$ is any reduced, irreducible curve in $\Mbar_{0, 0}(\PP^d, d)$ not lying in $G$ or any of the $\Gamma_i$, then $B$ is a moving curve.
\end{corollary}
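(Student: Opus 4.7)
The plan is to exploit the natural $\PGL(d+1)$-action on $\Mbar_{0,0}(\PP^d,d)$ given by post-composing a stable map with a linear automorphism of $\PP^d$, and to show that the translates $g\cdot B$ for $g\in\PGL(d+1)$ sweep out a Zariski-dense subset of $\Mbar_{0,0}(\PP^d,d)$. Since $\PGL(d+1)$ is connected, each translate lies in the same numerical class as $B$, so this realizes $[B]$ as a member of a covering algebraic family---exactly the condition for $B$ to be a moving curve, guaranteeing $[B]\cdot[D]\ge 0$ for every effective divisor $D$.

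The first step is to identify the geometric nature of a point of $B$. Let $[f:C\to \PP^d]\in B$. Because $B$ avoids every boundary divisor $\Gamma_i$, the source $C$ is irreducible, hence $C\cong \PP^1$. Because $B$ also avoids $G$, the image $f(C)$ is linearly non-degenerate in $\PP^d$. Any non-degenerate irreducible curve in $\PP^d$ has degree at least $d$, while $f_{\ast}[C]$ has degree exactly $d$; therefore $f$ is birational onto $f(C)$, and $f(C)$ is a rational normal curve. Thus every point of $B$ corresponds to a parameterization of some rational normal curve.

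The second step is the density of the orbit. It is classical that any two rational normal curves in $\PP^d$ are projectively equivalent: fixing the standard Veronese parameterization, every other rational normal curve is the image under a uniquely determined coset in $\PGL(d+1)/\PGL(2)$. Consequently the $\PGL(d+1)$-orbit of a single point $[f_0]\in B$ already surjects onto the locus of stable maps whose image is a rational normal curve, i.e., onto the dense open subset $\Mbar_{0,0}(\PP^d,d)\setminus\bigl(G\cup\bigcup_{i}\Gamma_i\bigr)$. A fortiori, the family $\{g\cdot B:g\in\PGL(d+1)\}$ covers a Zariski-dense subset.

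The argument is essentially routine once the $\PGL(d+1)$-action is in play; the only mild subtlety is that the class $[g\cdot B]$ must be truly independent of $g$, which holds because $\PGL(d+1)$ is a connected algebraic group acting by automorphisms of $\Mbar_{0,0}(\PP^d,d)$, hence fixes every numerical equivalence class. Given any effective divisor $D$, density lets us pick $g\in\PGL(d+1)$ with $g\cdot B\not\subset D$, so that $[B]\cdot [D] = [g\cdot B]\cdot [D]\ge 0$, completing the proof.
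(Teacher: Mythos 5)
Your argument is correct and is essentially the paper's own: both identify the general point of $B$ as a parameterization of a rational normal curve (using avoidance of $G$ and the $\Gamma_i$) and then use the $\PGL(d+1)$-translates of $B$ to sweep out the dense locus of rational normal curves, so that $B$ meets every effective divisor non-negatively. Your added remarks on connectedness of $\PGL(d+1)$ preserving the numerical class are a harmless elaboration of what the paper leaves implicit.
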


We will next look at sharpenings and extensions of these results.

\subsection{Effective classes on $\Mbar_{0, 0}(\PP^r, d)$.}\label{moobarrd}

We computed $\eff{\Mbar_{0, 0}(\PP^d, d)}$ above in terms of the classes $G$ and $\Gamma_i$ in Lemma~\ref{effmoobard} above and we now want to discuss the extensions of Coskun, Harris and Starr~\cite{CoskunHarrisStarrEffective} to $\Mbar_{0, 0}(\PP^r, d)$. Since there is no longer any risk of confusion between boundaries in $\Mbar_{0, n}$ and $\Mbar_{0, 0}(\PP^r, d)$, we will now write $\Delta_i$ for the $\Gamma_i$ defined above\footnote{Note that~\cite{CoskunHarrisStarrEffective} uses $D_{\deg}$ and for our $G$ and $\Delta_{i,d-i}$ for our $\Delta_i$.}. We begin by introducing two other important effective classes on $\Mbar_{0, 0}(\PP^r, d)$.

\begin{definition}
\noindent
	\begin{enumerate}
		\item Let $H$ be the locus of maps whose image meets a fixed codimension $2$ linear subspace $L \subset \PP^d$.
		\item Let $\Deltawt$ be the weighted total boundary defined by
		$$\Deltawt = \sum_i \frac{i(d-i)}{d}\Delta_i\,.$$
	\end{enumerate}	
\end{definition}

By a test curve argument (\cite{CoskunHarrisStarrEffective}*{Lemma 2.1}) using the curves $B_k, 1 \le k \le \lfloor \frac{d}{2} \rfloor $ defined as the one-parameter families of maps whose images contain a fixed set of $d+2$ linearly general points, and meet a fixed subspace of dimension $k$ (and, if $k >1$, a second of dimension $d-k$), these classes are related by

\begin{equation}
\label{GHDeltawtrelation}	 2 G =  \frac{(d+1)}{d} H - \Deltawt\,.
\end{equation}

By~\cite{FultonPandharipande}*{Lemma 14}, the general point of each $B_k$ is a map with image a rational normal curve and hence, by Corollary~\ref{movingcurvesonmoobardd}, all the $B_k$ are moving curves. Since, for $k > 1$, $B_k$, and only $B_k$, meets $\Delta_k$, they are independent, and by construction, $\deg_{B_k}(G) = 0$ for all $k$. Hence the $B_k$ are a set of moving curves spanning the null space of $G$ in the cone of curves of $\Mbar_{0, 0}(\PP^r, d)$.

We can use~(\ref{GHDeltawtrelation}) to identify $$u_{d,r}: V_d:= \sspan_{\C}\Bigl\{H, \Delta_i, i=1,\dots \lfloor \frac{d}{2} \rfloor\Bigr\} \to \pic{\Mbar_{0, 0}(\PP^r, d)}$$ and view all the cones $\eff{\Mbar_{0, 0}(\PP^r, d)}$ for a fixed $d$ as living in $V_d$ as well. The next lemma asserts that these cones are nested and stabilize.

\begin{lemma}[\cite{CoskunHarrisStarrEffective}*{Proposition 1.3}] \label{effmoobarrdstability}
The inclusions $$\eff{\Mbar_{0, 0}(\PP^r, d)} \subset \eff{\Mbar_{0, 0}(\PP^{r+1}, d)}$$
hold for all $r\ge 2$, with equality if $ r\ge d$.
\end{lemma}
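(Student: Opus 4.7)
For the inclusion, the plan is to pull back via linear projection from a general point. Fix a general $p \in \PP^{r+1}$ and let $\pi_p \colon \PP^{r+1} \dashrightarrow \PP^r$ denote projection from $p$. Composition induces a rational map
$$\Pi_p \colon \Mbar_{0,0}(\PP^{r+1}, d) \dashrightarrow \Mbar_{0,0}(\PP^r, d),$$
a morphism on the open locus $U_p$ of stable maps $f$ with $p \notin f(C)$, on which no component is contracted and all degrees are preserved. A dimension count on the universal evaluation map shows $U_p$ has complement of codimension $r$, hence at least $2$, so restriction induces an isomorphism $\pic{\Mbar_{0,0}(\PP^{r+1}, d)} \cong \pic{U_p}$. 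Since $\Pi_p$ is dominant (any $g \in \Mbar_{0,0}(\PP^r, d)$ lifts by adjoining a generic last coordinate), for $D \in \eff{\Mbar_{0,0}(\PP^r, d)}$ the closure of $\Pi_p^{-1}(D) \cap U_p$ is an effective divisor whose class equals $\Pi_p^{*}[D]$ computed on $U_p$. For the two generators: $f \in \Pi_p^{-1}(H_L)$ iff $f(C)$ meets $\pi_p^{-1}(L) \cup \{p\}$, a codimension-$2$ linear subspace of $\PP^{r+1}$, so $\Pi_p^{*}H = H$; and $\pi_p \circ f$ is reducible of type $(i, d-i)$ iff $f$ is, so $\Pi_p^{*}\Delta_i = \Delta_i$. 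Thus $\Pi_p^{*}$ is the identity on $V_d$ under the identifications $u_{d,r}$ and $u_{d,r+1}$, yielding the inclusion.

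For the reverse inclusion when $r \ge d$, every degree-$d$ stable map to $\PP^{r+1}$ has image spanning at most a $\PP^d$, hence lying in some hyperplane $\Lambda \subset \PP^{r+1}$. Each hyperplane gives a closed embedding $\iota_\Lambda \colon \Mbar_{0,0}(\Lambda, d) \hookrightarrow \Mbar_{0,0}(\PP^{r+1}, d)$, and the images cover $\Mbar_{0,0}(\PP^{r+1}, d)$ as $\Lambda$ varies. Given $D' \in \eff{\Mbar_{0,0}(\PP^{r+1}, d)}$, a generic $\Lambda$ must satisfy $\iota_\Lambda(\Mbar_{0,0}(\Lambda, d)) \not\subset D'$---else $D'$ would be the whole space---so $\iota_\Lambda^{*}D'$ is an effective divisor on $\Mbar_{0,0}(\Lambda, d) \cong \Mbar_{0,0}(\PP^r, d)$. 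A generic codimension-$2$ subspace $L \subset \PP^{r+1}$ meets $\Lambda$ in a codimension-$2$ subspace of $\Lambda$, so $\iota_\Lambda^{*}H = H$; and the boundary strata are preserved, so $\iota_\Lambda^{*}\Delta_i = \Delta_i$. Hence $[\iota_\Lambda^{*}D'] = [D']$ in $V_d$, giving the reverse inclusion.

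The main obstacle is ensuring that neither $\Pi_p^{-1}$ nor $\iota_\Lambda^{*}$ introduces phantom boundary contributions when passing to closures or restrictions. For $\Pi_p$ this is handled by the codimension-$\ge 2$ bound on the indeterminacy locus; for $\iota_\Lambda$, by the genericity of $\Lambda$. When $r$ or $r+1$ equals $d$, the relevant Picard group also contains the degenerate class $G$, but relation~\eqref{GHDeltawtrelation} rewrites $G$ in terms of $H$ and the $\Delta_i$, so the two class computations above suffice throughout.
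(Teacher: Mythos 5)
Your proposal is correct and follows essentially the same route as the paper: the inclusion comes from projection from a general point $p$, using that the locus of maps through $p$ has codimension $r\ge 2$ so that pullback preserves both effectivity and the coordinates in $V_d$, and the equality for $r\ge d$ comes from restricting an effective divisor to a linear subspace containing the span of a general map. The only (immaterial) difference is that you descend one hyperplane at a time while the paper pulls back directly along a linear isomorphism $j:\PP^d\to W$ onto the $d$-plane spanned by the image of a general member of $D$.
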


Informally, maps to the complement $U$ of a point $p \in\PP^{r+1}$ have codimension $r \ge 2$ in $\Mbar_{0, 0}(\PP^{r+1}, d)$ so projection from $p$ induces a map 
$$h :\pic{\Mbar_{0, 0}(\PP^r, d)}\to \pic{\Mbar_{0, 0}(\PP^{r+1}, d)}$$ sending effective divisors to effective divisors and compatible with the identifications $u_{d,r}$. This gives the inclusions. Equality follows for $r \ge d$ by producing inclusions $\eff{\Mbar_{0, 0}(\PP^r, d)} \subset \eff{\Mbar_{0, 0}(\PP^d, d)}$ as follows. If $D \in \eff{\Mbar_{0, 0}(\PP^{r+1}, d)}$, then the map associated to a general point of $D$ has image spanning a $d$-plane  $W \subset \PP^r$ and the pullback of $D$ by any linear isomorphism $j:\PP^d \to W$ is an effective class with the same coordinates in $V_d$. In the sequel, Lemma~\ref{effmoobarrdstability} lets us define new effective classes for a fixed small $r$ and obtain classes for all larger values and, to simplify, we use the same notation for the prototypical class and its pullbacks.

We will refer the reader to \cite{CoskunHarrisStarrEffective}*{\S 3} for other complementary results---in particular, the construction of moving curves dual to the one-dimensional faces of $\eff{\Mbar_{0, 0}(\PP^d, d)}$, either exactly, assuming the Harbourne-Hirschowitz conjecture, or approximately to any desired accuracy without this assumption.

Because we have such an explicit description of $\eff{\Mbar_{0, 0}(\PP^r, d)}$, and likewise, from the sequel \cite{CoskunHarrisStarrAmple} of $\nef{\Mbar_{0, 0}(\PP^r, d)}$, it is possible, at least for small values of $r$ and $d$, to answer more refined questions. In particular, we can attempt to understand the chamber structure of the stable base locus decomposition of $\eff{\Mbar_{0, 0}(\PP^r, d)}$. The case $r=d$ is particularly interesting and we conclude this subsection by describing the results of Chen, Coskun and Crissman~\cites{ChenLogMinimal, ChenCoskunCrissman} for $d=3$ and $d=4$ which reveal interesting relations with other moduli spaces.

To start we need the rosters of additional effective classes exhibited as geometric loci in Table~\ref{divisorcoords}. Then we need to give the coordinates of all these classes in terms of the basis consisting of $H$ and the boundaries. In fact, they are all of the form $aH + b \Delta + \bwt\Deltawt$. The coefficients, also given in the table, summarize cases worked out in \S2 of \cite{ChenCoskunCrissman} (where the coordinates of many tautological classes are also computed) and earlier results in  \cites{DiazHarris, PandharipandeIntersections, PandharipandeCanonical}, all obtained by standard test curve calculations.

\begin{center}
	\begin{table}[htbp] \renewcommand{\arraystretch}{1.3}
	\begin{tabular}{|c c p{1.6in} r r r  | }%
		\hline
		Divisor &  Least $r$  &  Description of general map with smooth domain& $a$  &  $b$ & $\bwt$ \\
		$T$ & r & $f(C)$ is tangent to a fixed hyperplane. & $\frac{d-1}{d}$ & $0$& $1$\\
		$NL$ & 2 & $f(C)$ has a node lying on a fixed line.&$\frac{(d-1)(2d-1)}{2d}$ & $0$ & $-\frac{1}{2}$\\
		$TN$ & 2 & $f(C)$ has tacnode. & $\frac{3(d-1)(d-3)}{d}$& $4$& $d-9$\\
		$TR$ & 2 & $f(C)$ has triple point.& $\frac{(d-1)(d-2)(d-3)}{2d}$& $-1$& $-\frac{d-6}{2}$\\
		$NI$ & 3 & $f(C)$ is not an isomorphism; a generic $f(C)$ is irreducible rational, of degree $d$, with a single node. & $\frac{(d-1)(d-2)}{d}$& $1$& $-\frac{d}{2}$\\
		\hline
	\end{tabular}
		\vspace{6pt}
	   \caption{\label{divisorcoords} Other classes on $\Mbar_{0, 0}(\PP^r, d)$ defined as geometric loci}
	\end{table}
	\vspace{-36pt}
\end{center}

When $d=r=3$, there is considerable collapsing. The loci $TN$ and $TR$ are empty, $NI=T$ and $NL$ coincides with a class called $F$ in~\cite{ChenLogMinimal} and defined as the closure of the locus of maps meeting a fixed plane in two points collinear with a fixed point. Writing $\mbar(\alpha) := \Proj\bigl(\bigoplus_{m \ge 0} H^0\bigl(m(H+\alpha\Delta)\bigr)\bigr)$ for the model of ``slope'' $\alpha$, this leads ---cf.~\cite{ChenLogMinimal}*{Theorem~1.2} to which we refer for further details---to the picture in Figure~\ref{moobar33cone} of the chamber structure of $\eff{\Mbar_{0, 0}(\PP^3, 3)}$. In this figure, each wall is labeled $D:\alpha$ with $D$ a spanning class from the list above and $\alpha$ its slope. Each chamber is labeled with the model, defined below, that arises as $\mbar(\alpha)$ in its interior with brackets (or parentheses) used to indicate of whether this is (or is not) also the model on the corresponding wall.

\begin{figure}[htbp]
    \centering
    \begin{overpic}[scale=.8]{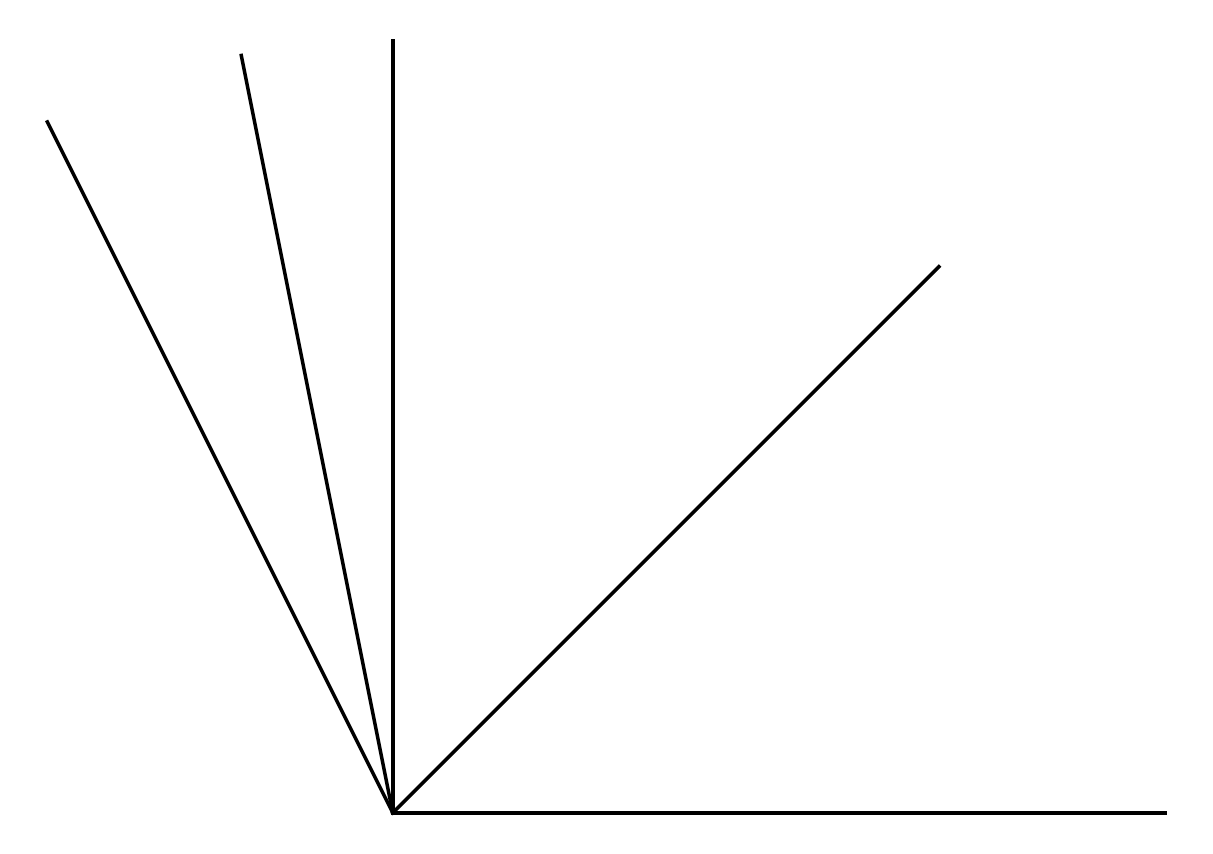}
		\put(00,610){\large$G:-\frac{1}{2}$}
		\put(120,670){\large$NL:-\frac{1}{5}$}
		\put(290,680){\large$H:0$}
		\put(740,490){\large$T:1$}
		\put(970,30){\large$\Delta:\infty$}
		\put(440,490){\large$\bigl(\Mbar_{0, 0}(\PP^3, 3)\bigr)$}
		\put(700,220){\large$\bigl[\moobarMM{\PP^3}{3}{2}\bigr)$}
		\put(240,520){\large$\bigl({\mathcal H}\bigr)$}
		\put(100,490){\large$\bigl({\mathcal H}'\bigr]$}
	\end{overpic}%
	 \vspace{-6pt}
  	\caption{\label{moobar33cone} Chamber structure of $\eff{\Mbar_{0, 0}(\PP^3, 3)}$}
	\vspace{-12pt}
\end{figure}

We will briefly describe the models and wall-crossing maps in the figure, referring to \cites{ChenLogMinimal} for further details. We start at the bottom right with $\Mbar_{0, 0}(\PP^3,3,2)$, which is the space of $2$-stable maps of Musta{\c{t}}{\u{a} and Musta{\c{t}}{\u{a}~\cite{MustataMustata} in which maps whose source has a degree $1$ ``tail'' are replaced by maps of degree $3$ on the ``degree $2$'' component that have a base point at the point of attachment of the tail; the map $\Mbar_{0, 0}(\PP^3, 3)\to\Mbar_{0, 0}(\PP^3,3,2)$ is the contraction of $\Delta$ by forgetting the other end of the tail. The chamber bounded by $T$ and $H$ is, as is shown in~\cite{CoskunHarrisStarrAmple}, the Nef cone of $\Mbar_{0, 0}(\PP^3, 3)$.

The ray through $H$ itself gives a morphism $\phi: \Mbar_{0, 0}(\PP^3, 3) \to \Chow(\PP^3,3)$ by sending a map $f$ to the cycle class of $f(C)$ and this is the right side of a flip that contracts the $8$ dimensional locus $P_3$ where $f$ is a degree $3$ cover of a line and the $9$ dimensional locus $P_{1,2}$ on which $f$ covers a pair of intersecting lines with degrees $1$ and $2$. The Hilbert scheme of twisted cubics contains a $12$-dimensional component ${\mathcal H}$ whose general member is a twisted cubic (and a second $15$-dimensional component---cf.~\cite{PieneSchlessinger}) and the left side of this flip is the cycle map $\psi: {\mathcal H} \to \Chow(\PP^3,3)$ which contracts the $9$-dimensional locus $R \subset {\mathcal H}$ of curves possessing a non-reduced primary component.

Finally, by~\cite{EllingsrudPieneStromme}*{Lemma 2}, \emph{every} point of ${\mathcal H}$ (and not just those coming from twisted cubics) is cut out by a unique net of quadrics, and hence there is a morphism $\rho: {\mathcal H} \to {\mathcal H}' \subset \G(3,10)$ that contracts $G$.

Already, when $r=d=4$, the picture gets substantially more complicated, and not simply because the dimension of this $\eff{\Mbar_{0, 0}(\PP^4, 4)}$ is $3$, In this case, the stable base locus decomposition is no longer completely known. Again we refer to~\cite{ChenCoskunCrissman}*{\S2} for proofs and further details about the claims that follow, and for less complete results about $\Mbar_{0, 0}(\PP^r, d)$ for other values of $r$ and $d$.

\newcommand{\dl}[3]{\put(#1,#2){\hbox{}\kern-1in\hbox to2in{\hfil{#3}\hfil}}}
\begin{figure}[ht]
    \centering
    \begin{overpic}[scale=1]{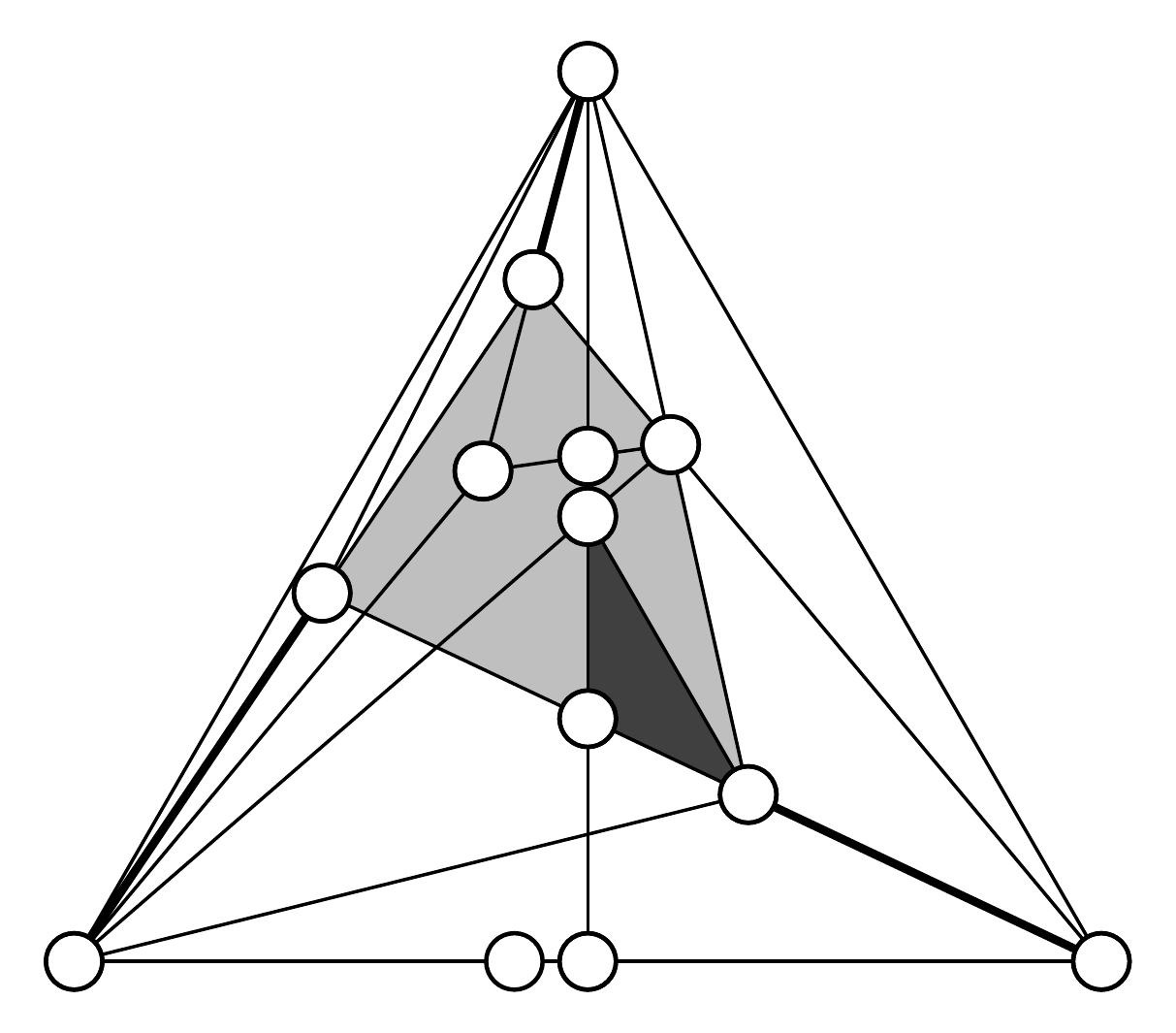}
		\dl{500}{810}{\small G}
		\dl{60}{55}{\small$\Delta'_1$}
		\dl{440}{55}{\small$\Delta$}
		\dl{500}{55}{\small$\Deltawt$}
		\dl{940}{55}{\small$\Delta_2$}
		\dl{500}{485}{\small NL}
		\dl{500}{430}{\small H}
		\dl{500}{260}{\small T}
		\dl{570}{490}{\small TR}
		\dl{638}{195}{\small P}
		\dl{455}{635}{\small NI}
		\dl{410}{470}{\small TN}
		\dl{275}{365}{\small Q}
	\end{overpic}%
	 \vspace{-6pt}
   \caption{\label{moobar44cone} Chamber structure of $\eff{\Mbar_{0, 0}(\PP^4, 4)}$}
\end{figure}

Figure~\ref{moobar44cone} shows a slice in barycentric coordinates in the rays $G$, and the components $\Delta'_{1} := \frac{3}{4}\Delta_1$ and $\Delta_2$ of the weighted boundary $\Deltawt$ (which give a slightly more symmetric picture than using $\Delta_{1}$ and $\Delta_2$). Two extra classes appear as vertices. The first is the class $P = H + \Delta_1+ 4\Delta_2$, which is shown in~\cite{CoskunHarrisStarrAmple}*{Remark 5.1} to be one of the $3$ vertices of $\nef{\Mbar_{0, 0}(\PP^4, 4)}$---the other two are $H$ and $T$. The second is the class $Q = 3H + 3\Delta_1- 2\Delta_2$ defined (up to homothety) as the ray in which the $\Delta_2{-}P{-}T$-plane meets the $\Delta'_1{-}NI$-plane.

In the figure, the white circles label these classes and those in Table~\ref{divisorcoords}. There are no longer any coincidences between these classes but there quite a few coplanarities visible in the figure as collinearities. The lines show the boundaries of chambers in $\eff{\Mbar_{0, 0}(\PP^4, 4)}$ (not necessarily the full set of chambers corresponding to the stable base locus decomposition) in terms of the classes above.  The central triangle shaded in dark gray is, as noted above, $\nef{\Mbar_{0, 0}(\PP^4, 4)}$.

A heavier line segment joins each of the three vertices of $\eff{\Mbar_{0, 0}(\PP^4, 4)}$ to an interior class. The two triangles formed by joining this edge to one of the two other vertices are the locus of divisors whose stable base locus contains the common vertex. For example, the triangle $\Delta_2{\text{-}}P{\text{-}}TR$ is the chamber whose stable base locus contains $\Delta_2$ but \emph{not} $G$ or $\Delta_1$. Together these triangles cover the complement of the light gray quadrilateral which therefore contains the cone of moving divisors; equality would follow if one could produce a locus with class $Q$ and with no divisorial base locus, as the defining descriptions in Table~\ref{divisorcoords} provide for the other vertices.

\subsection{The combinatorial extremal rays of Castravet and Tevelev}\label{mzeronbar}

We now focus on the space $\Mbar_{0, n}$ itself. Since the boundary divisors $\Delta_I$ are an effective set of generators of $\pic{\Mbar_{0, n}}$, a natural question---rendered even more tempting by Corollary~\ref{effmzeronsym}---is whether they generate $\eff{\Mbar_{0, n}}$. This is trivial for $n=3$ and $n=4$ when $\Mbar_{0, n}$ is respectively, a point and $\PP^1$, and easy for $n=5$.

Let us recall the argument from~\cite{HassettTschinkelEffective}*{Proposition 4.1} in the last case. Kapranov's construction~\cite{KapranovVeronese} (or the weighted variant in~\cite{HassettWeighted}) exhibits $\mijbar{0}{5}$, with the $\thst{5}{th}$ marked point distinguished, as the blowup of $\PP^2$ in $4$ general points $p_i$. Denote by $L$ the class of a general line, by $E_i$ the $\thst{i}{th}$ exceptional divisor, and by $E$ the sum of the $E_i$. We can then identify $\Delta_{\{i, 5\}}$ with $E_i$ and $\Delta_{\{i, j\}}$ with $L-E_i-E_j$ (these are the proper transform of the lines through $p_i$ and $p_j$, the other $(-1)$-curves). With these identifications, the $5$ maps $\Mbar_{0, 5}\to \Mbar_{0, 4}$ forgetting the $\thst{i}{th}$ marked point, respectively correspond to the $5$ (semiample) divisors $L-E_i$ and $2L-E$ and the $5$ blow-downs to $\PP^2$ correspond to $2L-E+E_i$ and to $L$.

Brute force calculation shows that, in the vector space spanned by $L$ and the $E_i$, the cone $\mathcal{C}$ spanned by the $10$ boundaries is dual to the cone spanned by these 10 semiample classes. Since the effective cone is the dual of the moving cone of curves and the latter lies inside the dual of the semiample cone, this shows that the boundaries generate $\eff{\Mbar_{0, 5}}$.

However, for any $n\ge 6$, examples due to Keel and Vermeire~\cite{Vermeire} show that there are effective classes $F_\sigma$ that are not effective sums of the boundaries. For $n=6$, fix one of the $15$ partitions $\sigma$ of the marked points into three pairs---say $\sigma= (12)(34)(56)$ which we also view as determining an element in $\Sn$. We can associate to this choice a divisor in two ways. The first is as the fixed locus $F_\sigma$ of the involution of $\Mbar_{0, 6}$ given by $\sigma$. There is a map 
$\phi: \Mbar_{0, 6} \to \Mbar_3$ by identifying the points in each pair to obtain a $3$-nodal irreducible rational curve, and we also obtain $F_\sigma$ as $\phi^*(\Mbar_{3, 2}^1)$ where $\Mbar_{3, 2}^1$ is the closure in $\Mbar_3$ of the hyperelliptic locus in $\M_3$.

To analyze $F_\sigma$, the starting point is again an explicit model of $\Mbar_{0, 6}$ (one starts by blowing up $5$ general points in $\PP^4$, and then blows up the proper transforms of the $10$ lines through 2 of these points). It is again straightforward to write down expressions for the $\Delta_I$ and for fixed loci like $F_\sigma$ as combinations of classes $L$, $E_i$ and $E_{ij}$ defined in analogy with those above (cf. the table on p.79 of~\cite{Vermeire}). Vermeire then gives an essentially diophantine argument with these coefficients to show that $F_\sigma$ is not an effective sum of boundaries. Pulling back $F_\sigma$ by forgetful maps, produces effective classes on $\Mbar_{0, n}$ for any $n \ge 6$ that are not effective sums of boundaries.

In another direction, these examples are known be sufficient to describe only $\eff{\Mbar_{0, 6}}$. Hassett and Tschinkel~\cite{HassettTschinkelEffective}*{Theorem 5.1} prove this by again showing that the dual of the cone generated by these classes lies in the moving cone of curves, for which the use of a tool like \net{Porta} which was convenient for $n=5$ is now essential. Castravet~\cite{CastravetCox} gives another argument that, though quite a bit longer, can be checked by hand, based on showing that the divisors of boundary components and the $F_\sigma$ generate the Cox ring of $\Mbar_{0, 6}$.

At this point, experts were convinced that $\eff{\Mbar_{0, n}}$ probably had many non-boundary extremal rays but there was no clear picture of how they might be classified, indeed there were no new examples, until the breakthrough of Castravet and Tevelev~\cite{CastravetTevelev} in 2009, which provides a recipe for constructing such rays from \emph{irreducible hypertrees} $\Gamma$---combinatorial data whose definition we will give in a moment, along with the related notions of \emph{generic} and \emph{spherical duals}---that they conjecture yields them all.

\begin{theorem}[\cite{CastravetTevelev}*{Theorem 1.5, Lemma 7.8 and Lemma 4.11}] \label{hypertreemaintheorem} Every hypertree $\Gamma$ of order $n$ determines an effective divisor $D_\Gamma \subset \Mbar_{0, n}$.
	\begin{enumerate}
		\item If $\Gamma$ is irreducible, then $D_\Gamma$ is a non-zero, irreducible effective divisor satisfying:
		\begin{enumerate}
			\item $D_\Gamma$ is an extremal ray of $\eff{\Mbar_{0, n}}$ and meets $\M_{0, n}$.
			\item There is a birational contraction $f_\Gamma: \Mbar_{0, n} \dashrightarrow X_\Gamma$ onto a normal projective variety $X_\Gamma$ whose exceptional locus consists of $D_\Gamma$ and components lying in $\Delta$.
			\item If $\Gamma$ and $\Gamma'$ are generic and $D_\Gamma=D_{\Gamma'}$, then $\Gamma$ and $\Gamma'$ are spherical duals.
		\end{enumerate}
		\item The pullback via a forgetful map $\Mbar_{0, n} \to \Mbar_{0, m}$ of the divisor $D_\Gamma$ on $\Mbar_{0, m}$ associated  to any irreducible hypertree of order $m$, which when $n$ is understood we will again (abusively) denote by $D_\Gamma$, also spans an extremal ray of $\eff{\Mbar_{0, n}}$.
		\item If $\Gamma$ is not irreducible, then every irreducible component of $D_\Gamma$---if this locus is nonempty---is pulled back via a forgetful map from the divisor $D_{\Gamma'}\subset \Mbar_{0, m}$ of an irreducible hypertree $\Gamma'$ of order $m < n$.
	\end{enumerate}
\end{theorem}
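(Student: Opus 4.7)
The plan is to construct $D_\Gamma$ explicitly as the closure of a locus in $\M_{0,n}$ cut out by the incidence conditions encoded by the hypertree $\Gamma$, and then to verify the claimed geometric properties by a combination of explicit rational parameterizations and test-curve arguments. To a hypertree $\Gamma$ of order $n$, one associates the locus $U_\Gamma \subset \M_{0,n}$ of configurations $[p_1,\dots,p_n]$ such that, viewed via the Kapranov model as $n$ points in $\PP^{n-3}$, the hyperedges of $\Gamma$ span projective subspaces of strictly less than the expected dimension. A dimension count using the hypertree axioms shows that this locus has codimension exactly one when $\Gamma$ is irreducible, and we set $D_\Gamma$ to be its closure in $\Mbar_{0,n}$. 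For reducible $\Gamma$ the same recipe defines an effective cycle which may fail to be irreducible.

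To prove (a) for irreducible $\Gamma$, I would first give a rational parameterization of $U_\Gamma$ by writing down, in residual coordinates attached to the vertices and edges of $\Gamma$, the universal family of point configurations satisfying the hypertree incidences. The irreducibility axiom for $\Gamma$ translates into irreducibility of the total space of this parameterization, showing that $D_\Gamma$ is a genuine irreducible divisor and that its generic point meets the interior $\M_{0,n}$. For extremality, the crucial step is to produce a covering family of curves $B_\Gamma \subset D_\Gamma$ by fixing all but one of the parameters in the parameterization and letting the remaining one vary, and then to compute $B_\Gamma \cdot D_\Gamma < 0$. Given this, any decomposition $D_\Gamma \equiv D' + D''$ with $D', D''$ pseudo-effective would force $B_\Gamma \cdot D' < 0$ or $B_\Gamma \cdot D'' < 0$, and since the $B_\Gamma$ sweep out $D_\Gamma$ the offending summand must itself be a positive multiple of $D_\Gamma$. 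This negativity calculation is, I expect, the main obstacle: it demands careful control of how the parameterization extends over the boundary of $\Mbar_{0,n}$, together with combinatorial identities that are specific to irreducible hypertrees.

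For the contraction in (b), I would invoke the Mori-theoretic principle that an irreducible divisor covered by a family of curves along which it has negative degree is contractible: choosing a nef class $L$ vanishing on $B_\Gamma$ and on the boundary strata that happen to lie in the null locus of $L$, the ring $\bigoplus_{m\ge 0} H^0\bigl(\Mbar_{0,n}, mL\bigr)$ (after suitable stabilization) produces a birational map $f_\Gamma: \Mbar_{0,n} \dashrightarrow X_\Gamma$ to a normal projective variety $X_\Gamma$ whose exceptional locus is precisely $D_\Gamma$ together with the auxiliary boundary components contracted by $L$. For (c), suppose $D_\Gamma = D_{\Gamma'}$ with both generic; the parameterization of step two then recovers the combinatorial type of $\Gamma$ from the generic point of $D_\Gamma$ up to exactly the symmetry encoded by passage to the spherical dual, which forces $\Gamma$ and $\Gamma'$ to be spherical duals of one another.

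Finally, for (2) and (3), extremality transfers cleanly along the forgetful map $\pi: \Mbar_{0,n} \to \Mbar_{0,m}$ for $n>m$. Given a covering family $B$ in $\Mbar_{0,m}$ with $B\cdot D_\Gamma < 0$, any generic lift $\tilde{B} \subset \Mbar_{0,n}$ satisfies $\tilde{B} \cdot \pi^* D_\Gamma = \deg(\pi|_{\tilde{B}})\bigl(B \cdot D_\Gamma\bigr) < 0$, and these lifts sweep out all of $\Mbar_{0,n}$, so the same extremality argument as above applies once one checks irreducibility of $\pi^* D_\Gamma$. For (3), one decomposes a reducible hypertree into its irreducible constituents; the incidence conditions factor accordingly, and each irreducible component of $D_\Gamma$ is identified with the pullback, via the forgetful map remembering only the points in the relevant sub-hypertree, of $D_{\Gamma'}$ for an irreducible $\Gamma'$ of smaller order.
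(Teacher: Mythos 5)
Your architecture---cut out $D_\Gamma$ by incidence conditions, prove irreducibility from a parameterization, get extremality from a sweeping family on which $D_\Gamma$ is negative, then contract by a general Mori principle---diverges from the actual proof of Castravet--Tevelev at exactly the points where your argument has gaps. First, the defining locus is wrong as stated: asking the hyperedges to span degenerate subspaces of $\PP^{n-3}$ does not cut out a divisor, since already a single triadic hyperedge imposes a collinearity of codimension $n-4$ in that projective space, so the expected codimension of your locus is far greater than one. The correct incidence picture is \emph{planar}: one realizes the $n$ markings as points of $\PP^{2}$ whose maximal collinear subsets are exactly the hyperedges and then projects from a general point of $\PP^2$; the count $(2n-8)-\sum_j(|\Gamma_j|-2)+2=n-4$ gives a divisor precisely because of the normalization axiom. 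Even this is not how nonemptiness and irreducibility are actually proved: the proof introduces the nodal hypertree curve $\Sigma_\Gamma$ of arithmetic genus $n-3$, identifies $\M_{0, n}$ with a space $G^1$ of admissible pencils on it, and defines $D_\Gamma$ as the closure of the exceptional locus $G^2$ of the map from pencils to line bundles; irreducibility, the partial class computation, and the decomposition in part (3) for reducible $\Gamma$ all come out of this Brill--Noether description, with the planar realization only reconciled with it afterwards.

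Second, your two decisive analytic steps are asserted rather than proved, and one rests on a false principle. The inequality $B_\Gamma\cdot D_\Gamma<0$ for a sweeping family is precisely the hard content, which you explicitly defer (``the main obstacle''), so extremality is not established. More seriously, there is no general ``Mori-theoretic principle'' producing a birational contraction of an irreducible divisor that is negative on a covering family of curves: one needs finite generation of the relevant section ring, which is not automatic on $\Mbar_{0, n}$ (this space is not a Mori dream space for large $n$). In the actual argument the target $X_\Gamma$ is \emph{constructed}, as a projective moduli space of invertible sheaves in $\picone$ on $\Sigma_\Gamma$ that are Gieseker-stable with respect to the dualizing sheaf; the contraction $f_\Gamma$ comes from this construction, and extremality of $D_\Gamma$ is deduced from the contraction rather than the other way around. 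Finally, note that part (2) cannot be dismissed as ``extremality transfers cleanly'': pullback along a forgetful map does not preserve extremality in general (e.g. $\pi^*\Delta_I=\Delta_I+\Delta_{I\cup\{n+1\}}$ is visibly decomposable), so the extremality of the pulled-back hypertree divisors is a separate lemma requiring its own proof.
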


The table below shows the number $\textrm{IH}(n)$ of irreducible hypertrees of order $n$, up to $\Sn$-equivalence. For $n=5$, this count must be $0$ since all extremal rays are boundaries. For $n=6$, the unique $D_\Gamma$ yields the Keel-Vermeire divisors (cf. Figure~\ref{kvhypertree}).
\begin{center} \renewcommand{\arraystretch}{1.2}
	\begin{tabular}{|c|rrrrrrr|}%
		\hline
		$n$ & $5$ & $6$ &$7$ &$8$ &$9$ &$10$ &$11$\\
		$\textrm{IH}(n)$& $0$ & $1$ &$1$ &$3$ &$11$ &$93$ &$1027$\\
		\hline
	\end{tabular}
\end{center}

As the table indicates, $\textrm{IH}(n)$ grows rapidly with $n$. Empirically, most hypertrees are generic. So the upshot of Theorem~\ref{hypertreemaintheorem} is to provide, as $n$ increases, very large numbers of new extremal effective divisors. As a complement, \cite{CastravetTevelev}*{\S9} gives examples of larger collections of non-generic irreducible hypertrees which give the same ray in $\eff{\Mbar_{0, n}}$.

Based, as far as the authors can tell on the cases $n \le 6$, Castravet and Tevelev propose a very optimistic converse conjecture. We quote from \cite{CastravetTevelev}*{Conjecture~1.1}:

\begin{conjecture}\label{hypertreeconjecture} Every extremal ray of $\eff{\Mbar_{0, n}}$ is either a boundary divisor or the divisor $D_\Gamma$  of an irreducible hypertree $\Gamma$ of order at most $n$.
\end{conjecture}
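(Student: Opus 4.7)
The plan is to proceed by induction on $n$, using Theorem~\ref{hypertreemaintheorem} as the source of known extremal rays and focusing all effort on showing there are no others. Given an extremal ray $[D]\in\eff{\Mbar_{0,n}}$ that is not a boundary divisor, the first reduction is to check whether $D$ is pulled back via one of the forgetful maps $\pi_i\colon\Mbar_{0,n}\to\Mbar_{0,n-1}$. If so, then induction and part~(2) of Theorem~\ref{hypertreemaintheorem} immediately reduce us to the analogous statement on a smaller moduli space, so the essential case is that of an extremal $[D]$ not pulled back from any $\Mbar_{0,n-1}$. Intuitively, such a $D$ must ``see'' all $n$ marked points simultaneously, and the goal is to extract from $D$ a hypertree structure $\Gamma$ of order exactly $n$ with $D=D_\Gamma$.

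The heart of the argument would be to construct this $\Gamma$ from the geometry of $D$. I would first study the restriction of $D$ to the open stratum $\M_{0,n}$: since $D$ is extremal and non-boundary, this restriction is nonzero, and its vanishing locus on each one-dimensional family obtained by varying a single marked point should encode a combinatorial incidence pattern. For each forgetful map $\pi_i$, one then analyzes how $D$ meets the generic fibre; the failure of $D$ to be a pullback forces this intersection to have a definite pattern of marked points coinciding, which is exactly the kind of data packaged in a hypertree. The technical task is to verify that the patterns obtained from all $n$ choices of $\pi_i$ are mutually compatible and satisfy the combinatorial axioms of an (irreducible) hypertree—in particular the linear-algebraic condition that expresses the existence of a zero-dimensional linear system on $\PP^1$ with the prescribed incidence. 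Once $\Gamma$ is produced, one compares $[D]$ and $[D_\Gamma]$: both meet $\M_{0,n}$ by construction, both are extremal, and a test-curve calculation in the style of Lemma~\ref{effmzeroninv} should force them to be proportional.

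The main obstacle is precisely the passage from ``combinatorial data extracted from $D$'' to ``honest irreducible hypertree''. A putative extremal non-boundary $D$ may well yield a compatible system of incidence patterns that fails one of the hypertree axioms, and ruling this out seems to require a genuinely new structural result. Two complementary attacks look most promising. The first is the Cox-ring route of \cite{CastravetCox}: prove that $\mathrm{Cox}(\Mbar_{0,n})$ is generated by boundary classes together with the $D_\Gamma$ attached to irreducible hypertrees of order at most $n$, from which Conjecture~\ref{hypertreeconjecture} follows formally. The second is dual: for each candidate non-hypertree extremal class $D$, exhibit a moving curve $B\subset\Mbar_{0,n}$—ideally built from an $\Sn$-orbit of explicit rational curves as in the proofs of Lemmas~\ref{effmzeroninv} and~\ref{effmoobard}—with $B\cdot D<0$, thereby forcing $D$ to lie in the cone spanned by boundaries and hypertree divisors. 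Given the explosive growth of $\mathrm{IH}(n)$ shown in the table and the fact that the evidence for the conjecture is currently confined to $n\le 6$, either route will almost certainly require a new geometric or tropical description of extremality on $\Mbar_{0,n}$ rather than a brute-force classification.
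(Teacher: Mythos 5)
There is nothing in the paper to compare your argument against: the statement you are asked about is Conjecture~\ref{hypertreeconjecture}, which is \cite{CastravetTevelev}*{Conjecture 1.1} quoted verbatim in this survey as an \emph{open} conjecture. The paper offers no proof and in fact remarks that most activity it has prompted has gone toward finding a counterexample, citing Pixton's candidate divisor on $\Mbar_{0,12}$ (an effective, non-moving class not equal to any irreducible hypertree divisor, though not yet known to lie outside the cone they span). The only evidence the paper records in its favor is the complete description of $\eff{\Mbar_{0, n}}$ for $n\le 6$.

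Your proposal is therefore not a proof but a research program, and you have correctly located where it breaks: the passage from an arbitrary non-boundary extremal ray $[D]$ to a hypertree $\Gamma$ with $D=D_\Gamma$ is precisely the open problem, and nothing in your sketch supplies it. The inductive reduction via forgetful maps is sound and consistent with parts (2) and (3) of Theorem~\ref{hypertreemaintheorem}, but the claim that a divisor not pulled back from any $\Mbar_{0,n-1}$ must ``encode a combinatorial incidence pattern'' satisfying the hypertree axioms is an assertion, not an argument; the test-curve method of Lemma~\ref{effmzeroninv} only works there because $\Sn$-invariance collapses $\pic{\Mbar_{0,n}}$ to the span of the $\Delta_i$, and no analogous finite spanning set is available for a general extremal class. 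Likewise, generation of the Cox ring by boundaries and hypertree divisors is only known for $n\le 6$ \cite{CastravetCox}, and producing a dual moving curve for ``each candidate non-hypertree extremal class'' presupposes a classification of such classes, which is the conjecture itself. In short: the statement remains unproved in the paper, and your proposal, by your own admission, leaves the essential step open.
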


To our knowledge, more of the activity this has prompted has been devoted to searching for a counterexample than for a proof. Aaron Pixton informs us that he has an example (different from that of~\cite{PixtonExample}) of a divisor $D$ on $\Mbar_{0,12}$ that is effective and non-moving, and that is not equal to any
irreducible hypertree divisor, but whether this divisor lies outside the cone spanned by the  hypertree divisors is not, at the time of writing, clear.

We now turn to defining all the terms used above. We have taken the liberty of introducing a few new terms like triadic (see below) for notions used or referred to often, but not named in~\cite{CastravetTevelev}. It will simplify  our definitions to write $\nnn := \{ 1, 2, \ldots, n\}$ (or any other model set of cardinality $n$).

Recall that a hypergraph $\Gamma$ of order $n$ consists of collection indexed by $j\in \ddd$ of hyperedges $\Gamma_j \subset \nnn$. We say that $\Gamma'$ is a sub-hypergraph of $\Gamma$ if each of the hyperedges $\Gamma'_k$ is a subset of some hyperedge $\Gamma_j$. We start with the notion of \emph{convexity} of a hypergraph.

\begin{definition} \label{convexdefinition}
\noindent
	\begin{enumerate}
\item For any set $ S $ of hyperedges, let $T_S= \bigcup_{j\in S} \Gamma_j$, $\tau_S = |T_S|-2 $ and $\sigma_S = \sum_{j \in S} \bigl( |\Gamma_j| - 2\bigr)$. We call $\Gamma$ \emph{convex} if for all $ S \subset \ddd$, $\tau_S \ge \sigma_S$. Taking $S$ to be a singleton, this implies that every hyperedge contains at least $3$ vertices.  We call $\Gamma$ \emph{strictly convex} if this inequality is strict whenever $2 \le |S| \le (d-1)$.
\item A hypertree is \emph{triadic} if every hyperedge contains exactly $3$ vertices (i.e. $\tau_S= \sigma_S$ for $S$ any hyperedge).  For such a hypertree, convexity simply says that any set $S$ of hyperedges contains at least $|S|+2$ vertices.
	\end{enumerate}
\end{definition}

Now we turn to the notions of \emph{hypertree} and of \emph{irreducibility}.
\begin{definition} \label{hypertreedefinition}
\noindent
	\begin{enumerate}
	\item  A \emph{hypertree} $\Gamma$ of order $n$ is a hypergraph satisfying:
		\begin{enumerate}
			\item Every vertex lies on at least $2$ hyperedges.
			\item (Convexity) $\Gamma$ is convex. In particular, every hyperedge contains at least $3$ vertices.
			\item (Normalization) $\tau_\Gamma = \sigma_\Gamma$; that is, $n-2 = \sum_{j\in \ddd} \bigl(|\Gamma_j|-2\bigr)$.
		\end{enumerate}
	\item A hypertree is \emph{irreducible} if it is strictly convex.
	\end{enumerate}
\end{definition}

Empirically, most hypertrees are triadic in which case the normalization condition simply says that $n=d+2$ and irreducibility says that any proper subset of $e\ge 2$ hyperedges contains at least $e+3$ vertices. The use of the term ``tree'' is motivated by the observation that a dyadic hypergraph (i.e. an ordinary graph) is a tree exactly when $n=d+1$

Now we turn to the notion of \emph{genericity}.
\begin{definition} \label{capacitydefinition}
 We let $\conv(\Gamma)$ denote the set of all convex sub-hypergraphs of $\Gamma$ and define the \emph{capacity} of $\Gamma$ by
$$\capac(\Gamma) := \max_{\Gamma' \in \conv(\Gamma)} \sigma_{\Gamma'}\,.$$
\end{definition}

\begin{definition} \label{genericdefinition}
\noindent
	\begin{enumerate}
\item A triple $T$ of vertices that do not lie on any hyperedge of $\Gamma$ but such that any two do lie on a hyperedge is called a \emph{wheel}\footnote{Here the term \emph{triangle} might better capture the intuition.}  of $\Gamma$.
\item If $\Gamma$ is a triadic hypertree of order $n$ and $T$ is a triple of vertices that is not a hyperedge, we can form a new triadic hypertree $\Gamma_T$ of order $(n-2)$ by identifying the vertices in $T$ and deleting any hyperedges containing $2$ of these vertices.
\item An irreducible triadic hypertree is \emph{generic} if, whenever $T$ is a triple that is neither a hyperedge nor a wheel, we have $\capac(\Gamma_T) = n-4$.
\end{enumerate}
\end{definition}

An important source of generic triadic examples is provided by triangulations of the $2$-sphere in which each vertex has even valence, or equivalently whose triangles can be $2$-colored, say black and white, so that each edge has one side of each color.

\begin{definition} \label{sphericaldefinition}
\noindent
	\begin{enumerate}
\item For any $2$-colorable spherical triangulation, the  triangles of each color form the set of hyperedges of a triadic hypertree---called a \emph{spherical} hypertree---on the full set of vertices, and we say this pair of hypertrees are \emph{spherical duals} (or, in~\cite{CastravetTevelev}, the black and white hypertrees of an even triangulation of the sphere).
\item Given a distinguished triangle in each of two \emph{spherical} hypertrees, we may form their \emph{connected sum} by choosing colorings which make one triangle white and the other black, deleting these two triangles, and glueing along the exposed edges.
\end{enumerate}
\end{definition}

\begin{lemma}\label{sphericalirreducibility} A spherical hypertree is irreducible unless it is a connected sum.	
\end{lemma}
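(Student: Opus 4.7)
The plan is to translate the failure of irreducibility into a topological statement about the union $X_{S}:=\bigcup_{T\in S} T$ of closed black triangles in $S^{2}$, and then extract from the planar graph $\partial X_{S}$ a $3$-cycle $C$ of the triangulation's $1$-skeleton that bounds no face. Such a $C$ cuts $S^{2}$ into two disks; the $2$-coloring automatically forces the three triangles meeting $C$ on each side to be monochromatic of opposite colors, so capping each disk with a triangle of the opposite color exhibits $\Gamma$ as a connected sum of the resulting spherical hypertrees.

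More precisely, assume $\Gamma$ is not irreducible and fix $S\subsetneq\Gamma$ with $2\le|S|\le d-1$ and $|T_{S}|=|S|+2$. The first step is to show $X_{S}$ is connected. Decompose $X_{S}=X_{1}\sqcup\cdots\sqcup X_{c'}$ into topological components, and let $S_{j}\subset\Gamma$ be the sub-hypergraph supported on $X_{j}$. Convexity of $\Gamma$ applied to each $S_{j}$ gives $|T_{S_{j}}|\ge|S_{j}|+2$; summing yields $|T_{S}|\ge|S|+2c'$, which with the hypothesis forces $c'=1$.

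Next, view $\partial X_{S}$ as a connected planar graph on $S^{2}$ with $V=|T_{S}|=|S|+2$ vertices and $E=3|S|$ edges (since distinct black triangles share no edges). By Euler it has $F=2|S|$ faces, namely the $|S|$ open triangles $\operatorname{int}(T)$ for $T\in S$ together with $k=|S|$ exterior components $R_{1},\ldots,R_{k}$ of $S^{2}\setminus X_{S}$. Every edge of $\partial X_{S}$ lies on a unique $T\in S$, so it borders exactly one interior face and one exterior face; hence $\sum_{i}\ell(\partial R_{i})=3|S|$. Simplicity of the $1$-skeleton forces each $\ell(\partial R_{i})\ge 3$, so the average being $3$ makes every $\partial R_{i}$ a simple $3$-cycle. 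Writing $t_{i}$ for the number of triangles of the triangulation contained in $\overline{R_{i}}$, a triangulated disk with a $3$-cycle boundary has $t_{i}=1$ or $t_{i}\ge 3$; since $\sum t_{i}=2B-|S|$ while $k=|S|$, the assumption that all $t_{i}=1$ would give $|S|=B$, contradicting $|S|\le d-1=B-1$. Hence some $t_{i}\ge 3$, and $C:=\partial R_{i}$ is a $3$-cycle of the triangulation that bounds no face.

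Finally, $C$ separates $S^{2}$ into two triangulated disks, and since each edge of $C$ lies in $\partial X_{S}$, the three triangles adjacent to $C$ from $\overline{R_{i}}$ are all white while those adjacent from the complementary disk are all black. Capping $\overline{R_{i}}$ with a black triangle and the other disk with a white one yields two $2$-colored triangulated spheres $\Gamma_{1}$ and $\Gamma_{2}$; convexity for each follows from the same Euler argument as in the previous step applied globally, while a parity argument that uses evenness of valences in $\Gamma$ together with the fact that $C$ bounds no face shows that each vertex of $C$ has at least two non-$C$ edges on each side, so every vertex in each $\Gamma_{i}$ retains valence $\ge 4$ and hence lies on at least two hyperedges. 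Thus $\Gamma_{1}$ and $\Gamma_{2}$ are valid spherical hypertrees with $\Gamma=\Gamma_{1}\#\Gamma_{2}$. The main obstacle is the first step: without using convexity of $\Gamma$ to force $c'=1$, the averaging that makes every exterior face a $3$-gon breaks down, and the separating $3$-cycle might fail to exist; a secondary technical hurdle is the verification in the last step that both caps meet the hypertree axiom requiring every vertex to lie on at least two hyperedges.
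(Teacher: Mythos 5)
The survey states Lemma~\ref{sphericalirreducibility} without proof (it is imported from \cite{CastravetTevelev}), so there is no in-paper argument to measure yours against; judged on its own, your proof is correct and the route---Euler-characteristic bookkeeping on the union of black triangles in a non-strictly-convex $S$, extraction of a separating $3$-cycle, and capping---is the natural one. The counts all check: same-colored triangles are edge-disjoint (each edge has one black and one white side), so $E=3|S|$; convexity applied to the components does force $X_S$ connected; and since one side of every edge of the $1$-skeleton is an interior (black) face, no exterior face boundary walk repeats an edge, so a closed walk of length $3$ in this simple graph really is a simple $3$-cycle, which is the point that makes the averaging argument bite. Two steps are asserted rather than argued and each needs only a line: (i) the dichotomy $t_i=1$ or $t_i\ge 3$ comes from the edge-incidence count $3t=2e_{\mathrm{int}}+3$ for a triangulated disk with triangular boundary, which forces $t$ odd; and (ii) evenness of valences in the capped spheres is automatic once you exhibit the $2$-coloring, and your key observation---that each edge of $C$ belongs to a black triangle of $S$ lying on the non-$R_i$ side, so the three triangles meeting $C$ from inside $R_i$ are white and those from outside are black---is exactly what makes the two colorings consistent with the caps. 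Your reduction of the ``every vertex on at least two hyperedges'' axiom to ``$C$ bounds no face on either side'' is also right: a vertex of $C$ with a single incident triangle on one side forces that triangle to have two, hence (by simplicity of the $1$-skeleton) all three, edges on $C$, making that side a single face, which is excluded by $t_i\ge 3$ on one side and by $|S|\ge 2$ on the other. With those two lines supplied the argument is complete.
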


\begin{figure}[ht]
    \centering
    \begin{overpic}[scale=0.9]{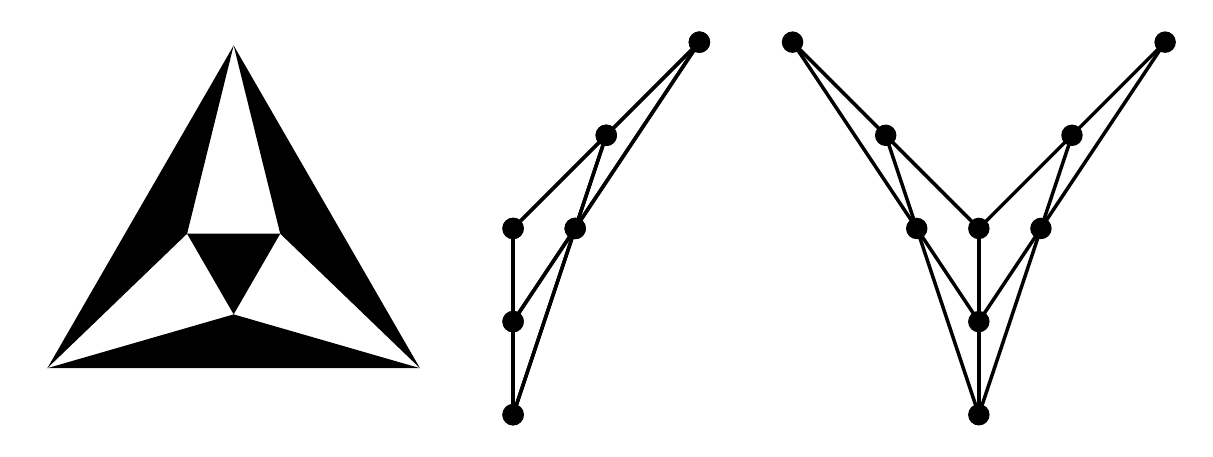}
	\end{overpic}%
 	\vskip-18pt
   \caption{\label{kvhypertree} The Keel-Vermeire hypertree}
\end{figure}

At the left of Figure~\ref{kvhypertree}, we show the simplest $2$-colorable spherical triangulation for which both the spherical duals give the irreducible triadic hypertree $\Gamma_{\textrm{KV}}$ of order $6$ shown in the center. The divisor $D_{\Gamma_{\textrm{KV}}}$ of this hypertree is the Keel-Vermeire divisor. On the right, we show the hypertree given by taking the connected sum of this black and white spherical duals. This is not irreducible by taking $S$ to be the set of hyperedges on the left or right side of the picture.

Proving Theorem~\ref{hypertreemaintheorem} involves delicate combinatorial and geometric arguments that are far too involved to give here. All we will attempt to do is to sketch the main steps of the argument. Castravet and Tevelev~~\cite{CastravetTevelev} also prove many other complementary results that we will not even cite here.

A key motivating idea, though one whose proof comes rather late in the development, is that every irreducible hypertree has a \emph{planar realization}, by which we mean an injection of $f_\Gamma: \nnn \to \PP^2$ so that the set of lines in the plane containing $3$ or more points of the image of $f_\Gamma$ is exactly the set of hyperedges of $\Gamma$. If so, and $\pi_p$ is the projection to $\PP^1$ from a point $p$ not on any of the lines through at least $2$ of the points in this image, then the composition $\pi_p\circ f_\Gamma$ defines a set of $n$ marked points on $\PP^1$ and hence a point $[f_\Gamma, p] \in \Mbar_{0, n}$. A typical realization and projection for the Keel-Vermeire curve are shown in Figure~\ref{kvphypertree}.

\begin{figure}[ht]
    \centering
    \begin{overpic}[scale=0.7]{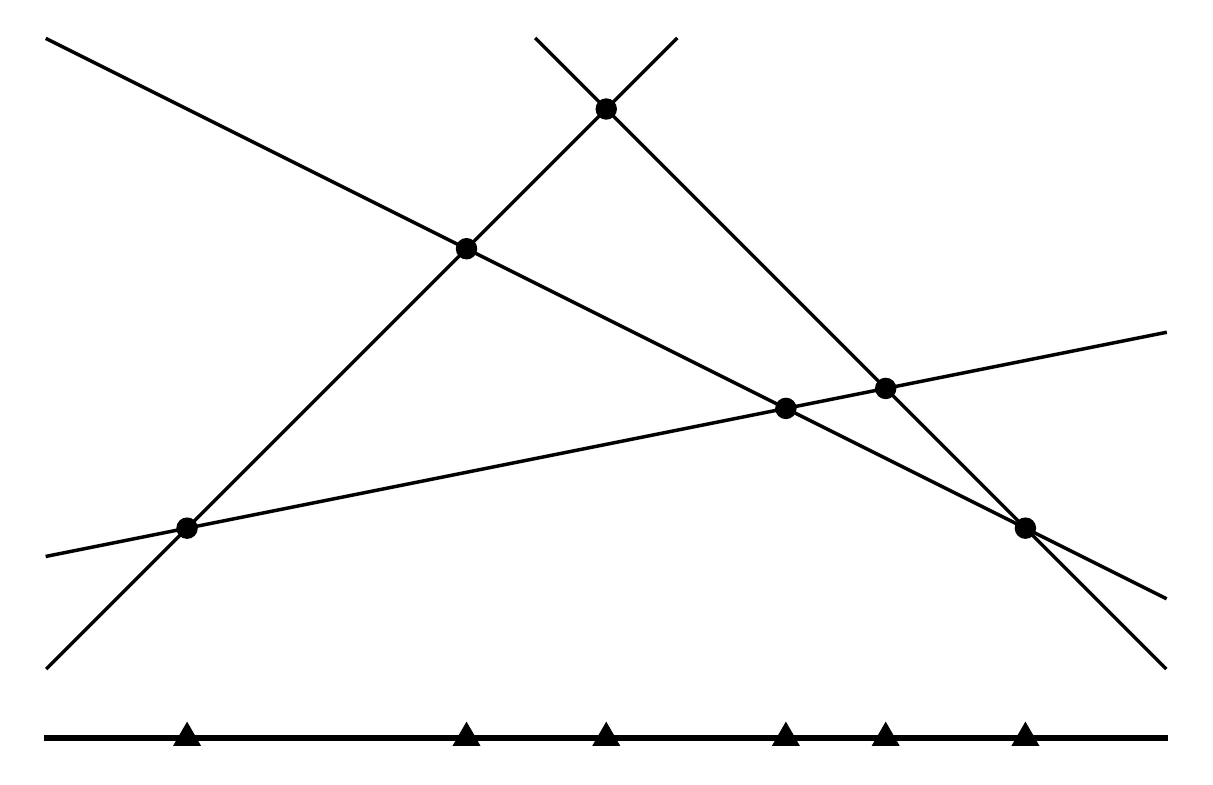}
	\end{overpic}%
   \caption{\label{kvphypertree} Planar realization and projection of the Keel-Vermeire hypertree}
\end{figure}

The closure of the locus of all such points is defined to be $D_\Gamma$ and, by the time the non-emptiness of this locus can be established, the fact that it is an  irreducible divisor has been established via a second description. Both planarity of hypertrees and irreducibility of the loci $D_\Gamma$ are obtained as byproducts of a study of hypertree curves and associated Brill-Noether loci. We will sketch the ideas in the simpler case when $\Gamma$ is triadic, simply hinting at the complications for general $\Gamma$.

The \emph{hypertree curve} $\Sigma_\gamma$ associated to a triadic hypertree $\Gamma$ is obtained by taking a copy of a $3$-pointed $\PP^1$ for each hyperedge of $\Gamma$ and gluing all the points corresponding to the vertex $i$ to a single point $p_i$ as a scheme-theoretic pushout (i.e. so that the branches look locally like the coordinate axes in an affine space of dimension equal to the valence of the vertex). Note that  $\Sigma_\gamma$ has genus $g = n-3= d-1$ and the Picard scheme $\picone$ of line bundles of degree $1$ on each component is (non-canonically) isomorphic to $(\G_m)^g$. For a hypertree whose vertices all have valence $2$, this curve is already stable (as in Figure~\ref{kvphypertree}). In general, to get a stable model, it is necessary to replace each vertex of valence $v \ge 3$ by a $v$-pointed copy of $\PP^1$ glued to the coincident components at its marked points, and to avoid adding moduli, to fix the choice of this curve in some arbitrary way.

Now the connection to Brill-Noether theory enters. For a general smooth curve $\Sigma$ of genus $g\ge 2$, there is a birational morphism $\nu: G^1_{g+1}\to  W^1_{g+1} \simeq \picdeg{g+1}{\Sigma}$ sending a pencil of divisors of degree $g+1$ to its linear equivalence class, and whose exceptional divisor $E$ lies over the codimension $3$ locus $W^2_{g+1}$ of line bundles with  $h^0(L)\ge 3$. The general pencil $D$ in $G^1_{g+1}$ and in $E$  is globally generated, so the general pencil $D$ in $E$ can be obtained as the composition of the map to $\PP^2$ associated to $\nu(E)$ with projection from a general point of $\PP^2$.

The idea unifying all the steps above in~\cite{CastravetTevelev} is to extend this picture to the genus $0$ hypertree curves $\Sigma_\Gamma$ above. Sticking to the simpler case of triadic $\Gamma$ with all vertices of valence $2$, a linear system on $\Sigma_\Gamma$ is \emph{admissible} if it is globally generated and sends the singularities to distinct points and an invertible sheaf is admissible if its complete linear series is. Define $\picone$ to be the set of admissible line bundles having degree $1$ on each component, define the Brill-Noether locus $W^r \subset \picone$ to be the locus of admissible line bundles with $h^0(\Sigma, L) \ge r+1$, and define the locus $G^r$ to be the pre-image of $W^r$ under the natural forgetful map $\nu$ from pencils to line bundles. Again, there are extra complications if the hypertree is not triadic (because then the hypertree curves have moduli), or if there are vertices of higher valence (in which case, sheaves in $\picone$ are required to have degree $0$ on the components inserted as each such vertex).

The main line of argument of~\cite{CastravetTevelev} may then be sketched as follows. Theorem 2.4 identifies $\M_{0, n}$ with $G^1$ and shows that $\nu$ is birational with exceptional locus $G^2$ (and compactifies this picture when $\Gamma$ is not triadic). After an interlude in \S3 devoted to computing the dimensions of images of maps generalizing this compactification to hypergraphs that are not necessarily convex, Theorem 4.2 shows that the divisor $D_\Gamma$ obtained by taking the closure of $G^2$ in $\Mbar_{0, n}$ is non-empty and irreducible and partially computes its class; a by product is the characterization of the components of $D_\Gamma$ (Lemma~4.11) when $\Gamma$ is not irreducible. Section~5 is another interlude proving Gieseker stability (with respect to the dualizing sheaf) of invertible sheaves in the (generalized) $\picone$ which is then applied to complete the construction of the birational contraction of Theorem~\ref{hypertreemaintheorem} (cf. also {Theorem 1.10}). The reconciliation of the descriptions of $D_\Gamma$ as the closure of the locus of plane realizations (in particular, the existence of such realizations) and as the closure of $G^2$ is carried out in \S6 (Theorem 6.2). Finally, that the divisor $D_\Gamma$ of a generic, irreducible $\Gamma$ determines it (up to spherical duals) is proved in \S7.

\bibrefspread
\section*{References}
\begin{biblist}
	\bibselect{chenfarkasmorrisonbib}
\end{biblist}

\end{document}